\newcommand{\R}{\mathbb{R}}
\newcommand{\N}{\mathbb{N}}
\newcommand{\C}{\mathbb{C}}
\newcommand{\Hy}{\mathbb{H}}
\newcommand{\Hc}{\mathcal{H}}
\def\XXint#1#2#3{{\setbox0=\hbox{$#1{#2#3}{\int}$ }
\vcenter{\hbox{$#2#3$ }}\kern-.6\wd0}}
\author{Brian Allen}
\title{ODE Maximum Principle at Infinity and Non-Compact Solutions of IMCF in Hyperbolic Space}
\newtheorem{Thm}{Theorem}
\newtheorem{Cor}{Corollary}
\newtheorem{Prop}{Proposition}
\newtheorem{Lem}{Lemma}
\newtheorem{Def}{Definition}
\newtheorem{Ex}{Example}
\begin{document}

\maketitle

\begin{abstract}
In this work we extend the ODE Maximum principle of Hamilton \cite{Ha} to non-compact hypersurfaces using the Omari-Yau maximum principle at infinity \cite{CY,O,P,Y}. As an application of this result, we investigate Inverse Mean Curvature Flow (IMCF) of non-compact hypersurfaces in hyperbolic space. Specifically, we look at bounded graphs over horospheres in $\Hy^{n+1}$ and show long time existence of the flow as well as asymptotic convergence to horospheres. 
\end{abstract}

\section{Introduction}
\label{intro}

Non-compact maximum principles are important to the study of non-compact solutions of geometric evolution equations where standard maximum principles do not apply. Using a maximum principle which follows from Huisken's monotonicty formula, Ecker and Huisken \cite{EH1} were able to show convergence under MCF to a translating soliton for graphs over planes in $\R^{n+1}$, satisfying certain initial growth conditions. Later, they developed further interior estimates for non-compact MCF \cite{EH2} as well as a non-compact maximum principle that works for a fairly general class of evolution equations with time dependent metrics including Ricci Flow.

The non-compact maximum principles mentioned above follow the standard parabolic PDE techniques where evolution equations need to be controlled on the whole domain of definition in order for the maximum principle to apply. In the literature on MCF and IMCF, though, there have been examples of cases where evolution equations cannot be controlled on the whole domain of definition but where the specific geometry around a max or min can be exploited to control the equation at these points. This is where an ODE maximum principle, such as Hamilton's maximum principle \cite{Ha,CM}, is most valuable and why the ODE maximum principle at infinity, a non-compact version of Hamilton's work, is important to the study of non-compact evolution equations (See Theorem \eqref{barriers} for an illustrative example of this phenomenon).

To illustrate the importance of the ODE maximum principle at infinity we will apply it to the geometric evolution of hypersurfaces $\Sigma^n$ through a one parameter family of embeddings $\varphi: \Sigma \times [0,T) \rightarrow \Hy^{n+1}$, $\varphi$ satisfying inverse mean curvature flow

\begin{equation}\label{IMCF}
\begin{cases}
\frac{\partial \varphi}{\partial t}(p,t) = \frac{\nu(p,t)}{H(p,t)}  &\text{ for } (p,t) \in \Sigma \times [0,T)
\\ F(p,0) = \Sigma_0  &\text{ for } p \in \Sigma 
\end{cases}
\end{equation}
where $H$ is the mean curvature of $\Sigma_t := \varphi_t(\Sigma)$ and $\nu$ is a consistently chosen normal vector (we will be more specific later).

 Global existence results for initial hypersurfaces in euclidean space were first obtained by Gerhardt \cite{CG1} and Urbas \cite{U}. They independently proved that any compact, mean-convex and star-shaped hypersurface will asymptotically approach a sphere and converge to a sphere after an appropriate rescaling under IMCF (as well as a whole family of inverse flows). 

Since then there have been extensions of this theorem to Lorentzian manifolds \cite{CG2}, hyperbolic space \cite{CG3,D} as well as to rotationally symmetric spaces with non-positive radial curvature \cite{S}. There has also been a great deal of work on weak solutions of IMCF including viscosity solutions \cite{CG}, weak solutions through connection to the p-Laplacian \cite{M} as well as the most famous formulation of weak variational solutions to IMCF by Huisken and Ilmanen \cite{HI2} which were used to prove the Riemannian Penrose Inequality (time symmetric case). 

The non-compact case of IMCF has seen almost no attention besides the specific examples given by Huisken and Illmanen \cite{HI1} and the recent papers on solitons of IMCF by Drugan, Lee and Wheeler \cite{DLW}, Drugan, Fong and Lee \cite{DLW2}, and Castro and Lerma \cite{CL} . Besides these examples of special solutions there has been no work on showing convergence to a prototypical hypersurface for a class of initial data as has been done for compact IMCF for the sphere. The present work changes this by  applying the ODE maximum principle at infinity to the study of non-compact IMCF in Hyperbolic space and more precisely we prove the following theorem.

\begin{Thm} \label{LTE}
Let $\Sigma_t$ be a smooth solution of IMCF with initial hypersurface $\Sigma_0$ satisfying the following bounds on the mean curvature and second fundamental form, $0 < H_0 \le H(x,0) \le H_1 < \infty$ and $|A|(x,0) \le A_0 < \infty$. We further assume that $\Sigma_0$ can be represented as a graph of a bounded function with bounded gradient, over and uniformly bounded away from $\R^n \times \{0\}$ in the upper half space  model of hyperbolic space. Then the IMCF starting at $\Sigma_0$ exists for all time $t \in [0,\infty)$ and the solution asymptotically converges to a horosphere.
\end{Thm}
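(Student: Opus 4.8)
\emph{Proof idea.} The plan is to run the classical ``graph representation, a~priori estimates, continuation'' scheme for IMCF, but with each maximum-principle step replaced by an application of the ODE maximum principle at infinity developed above. Work in the upper half space model $\Hy^{n+1}=\{(x,y):x\in\R^n,\ y>0\}$ with $g=y^{-2}(|dx|^2+dy^2)$, write $\Sigma_t$ as the graph $y=u(x,t)$, and orient $\nu$ so that $H>0$ --- the orientation for which every horizontal horosphere $\Hc_{y_0}:=\R^n\times\{y_0\}$ has $H\equiv n$. In these coordinates \eqref{IMCF} becomes a quasilinear parabolic equation $\partial_t u=\mathcal{Q}[u]$ on $\R^n$ which, under the hypotheses $0<c\le u_0\le C$, $|Du_0|\le\Lambda_0$, $0<H_0\le H\le H_1$, $|A|\le A_0$, is uniformly parabolic with smooth bounded coefficients near $t=0$, so a smooth solution exists on a maximal interval $[0,T)$. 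The first step is \emph{confinement}: the horospheres $\Hc_{y_\pm(t)}$ with $y_-(t)=c\,e^{-t/n}$ and $y_+(t)=C\,e^{-t/n}$ are themselves solutions of \eqref{IMCF}, and since $\Sigma_0$ lies between $\Hc_{y_-(0)}$ and $\Hc_{y_+(0)}$, the comparison principle underlying the maximum principle at infinity (this is essentially the barrier theorem referenced in the introduction) forces
\[
c\,e^{-t/n}\ \le\ u(x,t)\ \le\ C\,e^{-t/n}\qquad\text{for all }(x,t)\in\R^n\times[0,T).
\]
Equivalently, the image of $\Sigma_t$ under the hyperbolic dilation $(x,y)\mapsto e^{t/n}(x,y)$ is a graph contained in the fixed slab $\{c\le y\le C\}$; I carry out the remaining estimates and the convergence argument on this normalized flow.

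The heart of the argument is the \emph{gradient estimate}. Let $Y:=y\,\partial_y$, the $g$-unit field whose orbits are the vertical geodesics, and put $v:=\langle\nu,Y\rangle^{-1}$, normalized so that $v\ge 1$ with $v\equiv 1$ exactly on the horospheres $\Hc_{y_0}$; then $v$ is comparable to $\sqrt{1+|Du|^2}$, so bounding $v$ is a gradient bound. I would compute the evolution of $v$ under \eqref{IMCF} using $\partial_t\nu=-\nabla_{\Sigma_t}(1/H)$ and the conformal geometry of $Y$ in $\Hy^{n+1}$; the expected outcome is an inequality of the schematic shape
\[
\partial_t v\ \le\ \frac{\Delta v}{H^2}-\frac{2}{H^3}\langle\nabla H,\nabla v\rangle-\phi(v)+\psi,
\]
whose reaction part is non-positive once $v$ is large --- the ``geometry at the maximum'' phenomenon, here the fact that at a spatial maximum of $v$ the tilt term dominates. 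Plugging this into the ODE maximum principle at infinity --- after verifying its hypotheses, namely completeness of $\Sigma_t$ and a lower curvature bound, which follow from the confinement together with an a~priori (locally finite, possibly time-dependent) curvature bound that one bootstraps --- yields $\sup_{\Sigma_t}v\le\max\{\sup_{\Sigma_0}v,\,\mathrm{const}\}$, i.e.\ a uniform bound $|Du(\cdot,t)|\le\Lambda$. I expect this to be the main obstacle: deriving the correct evolution inequality for $v$ in the hyperbolic setting and, more delicately, checking that the hypotheses of the maximum principle at infinity hold along the non-compact evolving hypersurfaces.

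With confinement and the gradient bound in hand, I turn to \emph{curvature estimates}. Since $\operatorname{Ric}_{\Hy^{n+1}}(\nu,\nu)=-n$, IMCF gives $\partial_t H=H^{-2}\Delta H-2H^{-3}|\nabla H|^2+H^{-1}(n-|A|^2)$; combined with $|A|^2\ge H^2/n$ and the maximum principle at infinity this yields $\sup_{\Sigma_t}H\le\max\{H_1,n\}$ at once. The lower bound $H\ge c_1>0$ is subtler, since the bound at a spatial minimum is not self-improving by itself, so instead I would control the umbilicity defect $|\mathring A|^2=|A|^2-H^2/n$ (equivalently the spread of the principal curvatures about $1$): its IMCF evolution is the hyperbolic analogue of Huisken's pinching computation and, thanks to the ambient curvature $-1$ and the slab geometry, has reaction term $\le-\lambda|\mathring A|^2$ for some $\lambda>0$ once $H$ is two-sidedly controlled; the maximum principle at infinity then gives $\sup_{\Sigma_t}|\mathring A|^2\le e^{-\lambda t}\sup_{\Sigma_0}|\mathring A|^2$, in particular a uniform bound on $|A|$, and feeding this back into the $H$-evolution at a spatial minimum closes the lower bound $H\ge c_1$. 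Standard interior parabolic estimates then bound all higher derivatives of $u$ uniformly in $t$.

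Finally, \emph{long-time existence and convergence}. The uniform estimates make $\partial_t u=\mathcal{Q}[u]$ uniformly parabolic with uniformly bounded coefficients on $\R^n\times[0,T)$, so the solution extends past any finite $T$; hence $T=\infty$. For the asymptotics, the dilated surfaces $\hat\Sigma_t:=e^{t/n}\Sigma_t$ are graphs $y=\hat u(\cdot,t)$ in the fixed slab $\{c\le y\le C\}$ with uniformly bounded geometry, and, being preserved by the ambient isometry, still satisfy $|\mathring A_{\hat\Sigma_t}|+|H_{\hat\Sigma_t}-n|\to 0$ exponentially; hence every $C^\infty_{loc}$ subsequential limit is a complete hypersurface with all principal curvatures equal to $1$ that is a graph over $\R^n$ inside the slab --- necessarily a horizontal horosphere $\Hc_{y_\infty}$. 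That the full limit exists and is a single horosphere follows because $\sup_{\R^n}\hat u(\cdot,t)$ is non-increasing, $\inf_{\R^n}\hat u(\cdot,t)$ is non-decreasing, and $\operatorname{osc}_{\R^n}\hat u(\cdot,t)\to 0$ (again by the maximum principle at infinity applied to the normalized flow, whose fixed points are exactly the horizontal horospheres). Thus $\Sigma_t$ exists for all $t\ge 0$ and, after the dilation normalization, converges smoothly to a horosphere, which is the assertion.
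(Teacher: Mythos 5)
Your overall architecture --- confinement by horospheres, a gradient bound via the support function, mean-curvature bounds from the ODE maximum principle at infinity, and a continuation argument --- tracks the paper's Subsections \ref{Subsec:C0C1Estimates}--\ref{Subsec:Asymptotic} (Theorems \ref{barriers}, \ref{globalwcontrol}, \ref{globalHcontrol}, \ref{ContinuationCriterion}). The genuine gap is the curvature step. You propose to apply the ODE maximum principle at infinity directly to the scalar $G=|A-g|^2$ and claim the reaction part becomes $\le -\lambda G$ once $H$ is two-sidedly controlled, "the hyperbolic analogue of Huisken's pinching computation." But the evolution of $|A|^2$ (Lemma \ref{EvolutionEquations}), and hence of $G$, contains the sign-indefinite gradient cross term $-\tfrac{4}{H^3}A(\nabla H,\nabla H)$. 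The only absorption available, via $|\nabla A|^2\ge\tfrac{3}{n+2}|\nabla H|^2$, gives
\begin{align}
-\tfrac{2}{H^2}|\nabla A|^2-\tfrac{4}{H^3}(A-g)(\nabla H,\nabla H)\ \le\ \tfrac{|\nabla H|^2}{H^2}\Bigl(\tfrac{4|A-g|}{H}-\tfrac{6}{n+2}\Bigr),
\end{align}
which is nonpositive only when $|A-g|$ is already small --- precisely what you are trying to prove. The Omori--Yau sequence does not rescue this: at an approximate supremum of $G$ it forces $|\nabla G|$ to be small, not $|\nabla H|$. So the assertion ``$\sup_{\Sigma_t}|A-g|^2 \le e^{-\lambda t}\sup_{\Sigma_0}|A-g|^2$ by the maximum principle at infinity'' is circular, and feeding it back into the $H$-equation to close the lower bound $H\ge c_1$ (itself different from the paper's route via $u=(wH)^{-1}$) does not work as stated.

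The paper's resolution is structurally different, and this is the non-obvious part of the proof. For the a priori $|A|$ bound it does \emph{not} use the ODE maximum principle at infinity at all: because the extremal eigenvalue of $A$ (equivalently of $P_i^j:=w^{-1}A_i^j$) is only Lipschitz, the Laplacian comparison in Theorem \ref{ODEmax} does not apply to it. Instead the paper builds IMCF-adapted parabolic cutoff functions (Proposition \ref{EDE}, Lemma \ref{CutEq}), localizes to a compact parabolic neighborhood $U_R$, and runs Hamilton's \emph{tensor} maximum principle there on $\eta(\alpha)P_i^j$ (Lemmas \ref{LAEq}, \ref{LAEs}, Corollary \ref{globalAcontrol}); contracting $-\nabla_iH\nabla^jH$ against the extremal eigenvector produces the favorable sign that the scalar $|A|^2$ computation lacks. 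This yields a non-decaying $|A|$ bound, enough for long-time existence via Krylov--Safonov. The decay of $|A-g|$ is then built up in stages: $C^1$-decay of $|\nabla^0y|$ (Lemma \ref{AG}), then $C^2$-decay of $|\nabla^0\nabla^0y|$ by a separate Euclidean maximum-principle computation on $\beta=y^{ij}y_{ij}$ (Lemma \ref{C^2Decay}), giving $|A-g|\le Ce^{-(\gamma+1/n)t}$ (Corollary \ref{CrudeTFABound}), and only then does the $G$-evolution close to the sharp rate $Ce^{-2t/n}$ (Theorem \ref{AA}). Your pinching-at-once plan shortcuts these intermediate steps, and the cross term blocks the shortcut.
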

\vspace{0.25cm}

In the second section we state and prove an ODE maximum principle at infinity which allows us to use the Omari-Yau maximum principle at infinity  \cite{CY,O,P,Y} to extend the ODE maximum principle of Hamilton \cite{Ha}, \cite{CM} to the case of bounded (in space) functions defined on non-compact domains. 

In the third section we use the ODE maximum principle at infinity to prove Theorem \eqref{LTE}. We start by showing short time existence to the flow, move on to long time existence and finish with asymptotic properties. In this section we highlight the usefulness of the ODE maximum principle at infinity in Theorem \ref{barriers} and we point out that some different methods are used in Lemma \ref{C^2Decay} to find $C^2$ decay for the flow.


\section{ODE Maximum Principle at Infinity}
\label{sec:1}

In this section we state and prove an ODE maximum principle that works for functions defined on non-compact domains and will be applied to study non-compact solutions of IMCF in Hyperbolic space in the next section. This is an extension of the work of Hamilton \cite{Ha} which is described in detail in \cite{CM}.

\begin{Thm}\label{ODEmax}
Assume for $t \in [0,T)$ that $g(t)$ is a family of Riemannian metrics defined on the manifold $M^n$ so that the dependence on $t$ is smooth. We also assume that $g_t$ is a metric to which the Omori-Yau maximum principle at infinity (See Theorem \ref{OYMax}) applies for each $t \in [0,T)$.

Let $u : M \times [0,T) \rightarrow \R$ be a smooth function so that  $|u(x,t)| \le C(t)$, $|\nabla u| \le C(t)$ and $|u_t|_{C^{\alpha}}\le C(t)$ for each time $t \in (0,T)$, satisfying
\begin{align}
\left (\partial_t - H^{ij}\nabla_i^{g_t}\nabla_j^{g_t}\right ) u = \langle X(x,u,\nabla^{g_t} u,t), \nabla^{g_t}u \rangle_{g_t} + F(u)
\end{align}
where $|X| \le C_1(t)$, $F$ is a locally Lipschitz function on $\R$ and $H_{ij}$ is a symmetric, positive definite matrix so that $|H| \le C_0$.

Setting $u_{sup}(t) = \sup_{x \in M}u(x,t)$ we have that the function, $u_{sup(t)}$ is locally Lipschitz and hence differentiable at almost every time $t \in [0,T)$. At every differentiable time we have that
\begin{align}
\frac{d u_{sup}(t)}{dt} = \lim_{k \rightarrow \infty} \frac{\partial u(x_k,t)}{\partial t} \hspace{1 cm} &\text{ where } \{x_k\} \subset \R^n \text{ is any sequence such}
\\& \text{that } \lim_{k \rightarrow \infty} u(x_k, t) = \sup_{x \in \R^n}u(x,t)
\end{align}

If $\varphi : [0,T') \rightarrow \R$ is a maximal solution of the ODE
\begin{align}
\begin{cases}
\varphi'(t) &= F(\varphi(t))
\\  \varphi(0) &= u_{sup}(0)
\end{cases}
\end{align}
 then we have that $u(x,t) \le \varphi(t)$ for $(x,t) \in M \times [0,\min\{T,T'\})$.
\end{Thm}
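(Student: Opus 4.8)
The plan is to combine the Omori--Yau maximum principle at infinity with the classical ODE comparison argument of Hamilton, the key new ingredient being that at a (near-)maximum the spatial derivative term and the second-order term have favorable signs in the limit. First I would establish the regularity and the derivative formula for $u_{sup}(t)$. Fix $t_0 \in (0,T)$; the bounds $|u| \le C(t)$, $|\nabla u| \le C(t)$, and the $C^\alpha$ bound on $u_t$ (which gives an interior parabolic estimate, or is simply assumed) show that $u(\cdot,t)$ is uniformly bounded and that $u_t(\cdot,t)$ is uniformly bounded on a neighborhood of $t_0$; hence $u_{sup}(t)$ is finite and, being a sup of functions with a common Lipschitz-in-$t$ modulus, is itself locally Lipschitz, therefore differentiable a.e. At a differentiable time $t_0$, take any sequence $x_k$ with $u(x_k,t_0) \to u_{sup}(t_0)$. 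For the formula $u_{sup}'(t_0) = \lim_k u_t(x_k,t_0)$ I would argue as in the compact case: for $h>0$, $u_{sup}(t_0+h) - u_{sup}(t_0) \ge u(x_k,t_0+h) - u(x_k,t_0) = h\, u_t(x_k,t_0) + o(h)$ uniformly in $k$ (using the uniform bound on $u_t$ and its Hölder/Lipschitz control), and symmetrically from below using that $u(x_k,t_0) \ge u_{sup}(t_0) - \varepsilon_k$; letting $h \to 0^{\pm}$ and then $k \to \infty$ pins the two one-sided derivatives to $\lim_k u_t(x_k,t_0)$. (The statement as written already grants us this formula, so in the proof of the final conclusion I may just invoke it.)

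Second, I would extract the differential inequality $u_{sup}'(t_0) \le F(u_{sup}(t_0))$ at every differentiable time $t_0$. Apply the Omori--Yau maximum principle at infinity to the function $u(\cdot,t_0)$ on $(M,g_{t_0})$: there is a sequence $x_k$ with
\begin{align}
u(x_k,t_0) \to u_{sup}(t_0), \qquad |\nabla^{g_{t_0}} u(x_k,t_0)| \to 0, \qquad \limsup_{k} \nabla^{g_{t_0}}_i \nabla^{g_{t_0}}_j u(x_k,t_0) \le 0
\end{align}
in the sense that the Hessian at $x_k$ is bounded above by $\tfrac1k g_{t_0}$. Evaluating the evolution equation at $(x_k,t_0)$ gives
\begin{align}
u_t(x_k,t_0) = H^{ij}(x_k,t_0)\,\nabla_i\nabla_j u(x_k,t_0) + \langle X(x_k,u,\nabla u,t_0), \nabla^{g_{t_0}} u(x_k,t_0)\rangle_{g_{t_0}} + F(u(x_k,t_0)).
\end{align}
Since $H_{ij}$ is positive definite with $|H| \le C_0$, we have $H^{ij}\nabla_i\nabla_j u(x_k,t_0) \le C_0 \cdot \tfrac{n}{k} \to 0$; since $|X| \le C_1(t_0)$ and $|\nabla u(x_k,t_0)| \to 0$, the inner-product term $\to 0$; and since $F$ is locally Lipschitz (hence continuous) and $u(x_k,t_0) \to u_{sup}(t_0)$ stays in a fixed compact interval, $F(u(x_k,t_0)) \to F(u_{sup}(t_0))$. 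Taking $k \to \infty$ and using the derivative formula gives $u_{sup}'(t_0) = \lim_k u_t(x_k,t_0) \le F(u_{sup}(t_0))$.

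Third, I would run the ODE comparison. With $\varphi$ the maximal solution of $\varphi' = F(\varphi)$, $\varphi(0) = u_{sup}(0)$, and $w(t) := u_{sup}(t) - \varphi(t)$, we have $w(0) = 0$, $w$ is locally Lipschitz, and at a.e. $t$, $w'(t) \le F(u_{sup}(t)) - F(\varphi(t)) \le L(t) |w(t)|$ on any interval where both stay in a fixed compact set, $L(t)$ a local Lipschitz constant for $F$. A standard Gronwall argument for locally Lipschitz functions (integrate the a.e. inequality, which is valid since $w$ is absolutely continuous) then yields $w(t) \le 0$, i.e. $u_{sup}(t) \le \varphi(t)$, on $[0,\min\{T,T'\})$; combined with $u(x,t) \le u_{sup}(t)$ this is the claim. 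The main obstacle is the first step — justifying that $u_{sup}$ is locally Lipschitz and that its a.e. derivative is exactly $\lim_k u_t(x_k,t_0)$ for \emph{any} maximizing sequence — because on a non-compact manifold there need be no actual maximizer, so one cannot simply quote Hamilton's lemma; one must genuinely use the uniform-in-$x$ control of $u$ and $u_t$ near $t_0$ to make the one-sided difference-quotient estimates uniform along the sequence. Once that formula is in hand, the Omori--Yau input and the Gronwall step are routine.
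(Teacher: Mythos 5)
Your proposal is correct, and the first two steps (regularity of $u_{\sup}$ with the derivative formula along maximizing sequences, and the application of the Omori--Yau maximum principle to extract $u_{\sup}'(t)\le F(u_{\sup}(t))$ at differentiable times) follow essentially the same route as the paper, which isolates the first step as a separate lemma. The one genuine difference is in the ODE comparison. You work directly with $w(t)=u_{\sup}(t)-\varphi(t)$, note $w(0)=0$ and $w'\le L\lvert w\rvert$ a.e.\ on intervals where everything stays in a fixed compact set, and then conclude $w\le 0$ by a Gronwall argument for absolutely continuous functions (taking $t_0=\sup\{t\le t^*: w(t)\le 0\}$ and integrating $(e^{-Lt}w)'\le 0$ on $(t_0,t^*]$ for a supposed $t^*$ with $w(t^*)>0$). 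The paper instead follows Mantegazza's exposition of Hamilton's trick: it perturbs the initial data, letting $\varphi_\epsilon$ solve the same ODE with $\varphi_\epsilon(0)=u_{\sup}(0)+\epsilon$, shows by contradiction (using a lower Gronwall bound $\Phi_\epsilon(t)\ge\epsilon e^{-C_\epsilon t}>0$ on $[0,\bar t]$) that $u_{\sup}\le\varphi_\epsilon$, and then lets $\epsilon\to 0$, relying on continuous dependence of ODE solutions on initial data. Your direct Gronwall route is cleaner and avoids the continuous-dependence step entirely; the paper's $\epsilon$-perturbation buys a strict positivity $\Phi_\epsilon(0)=\epsilon>0$ that makes the contradiction slightly more transparent at the cost of an extra limiting argument. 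Both are valid; if you write up your version, do make explicit (as you gesture at) the passage from $w'\le L\lvert w\rvert$ to $w'\le Lw$ on the set where $w>0$, and that absolute continuity of the locally Lipschitz $w$ is what licenses integrating an a.e.\ differential inequality.
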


\textbf{Note:} We did not impose conditions that imply the Omori-Yau maximum principle (result which gives a sign on the Hessian or the Laplacian of a function at max/mins if appropriate curvature bounds are satisfied) since there are fairly general assumptions that may be useful depending on the application. What is important is that you can show that some hypotheses that guarantee the Omori-Yau maximum principle are in place in order to apply Theorem \ref{ODEmax}. With respect to our application of Theorem \ref{ODEmax} to IMCF in Hyperbolic space, we will ensure that the hypotheses of Theorem \ref{OYMax} are satisfied in order to apply Theorem \ref{ODEmax}.

Before we can prove this theorem we will need the following lemma.

\begin{Lem} \label{ader}
Let $u:M^n \times (0,T) \rightarrow \R$ be a bounded $C^1$ function so that $|u(x,t)| \le C(t)$, $|\nabla u| \le C(t)$ and $|u_t|_{C^{\alpha}}\le C(t)$ then $u_{sup}: (0,T) \rightarrow \R$, defined as $u_{sup}(t) = \sup_{x \in M}u(x,t)$, is a locally Lipschitz function in $(0,T)$. Also, at every differentiable time $t \in (0,T)$ we have that

\begin{align}
\frac{d u_{sup}(t)}{dt} = \lim_{k \rightarrow \infty} \frac{\partial u(x_k,t)}{\partial t} \hspace{1 cm} &\text{ where } \{x_k\} \subset M \text{ is any sequence such}
\\&\text{that } \lim_{k \rightarrow \infty} u(x_k, t) = \sup_{x \in M}u(x,t)
\end{align}
\end{Lem}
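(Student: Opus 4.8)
The plan is to carry the compact-case argument (``Hamilton's trick''; see also \cite{CM}) over to the non-compact setting by replacing, throughout, the point at which the supremum is attained by an arbitrary maximizing sequence. First note that $|u(x,t)| \le C(t)$ makes $u_{sup}(t) = \sup_{x \in M} u(x,t)$ finite for each $t$, so $u_{sup}$ is a well-defined real-valued function; we read the standing bounds as holding with constants uniform on each compact subinterval $[a,b] \subset (0,T)$ and set $C_* := \sup_{\tau \in [a,b]} C(\tau) < \infty$. For $x \in M$ and $s,t \in [a,b]$, the fundamental theorem of calculus applied to the $C^1$ curve $\tau \mapsto u(x,\tau)$ gives $u(x,t) - u(x,s) = \int_s^t u_\tau(x,\tau)\,d\tau$, hence $|u(x,t) - u(x,s)| \le C_* |t-s|$; taking the supremum over $x$ on both sides yields $|u_{sup}(t) - u_{sup}(s)| \le C_* |t-s|$. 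Thus $u_{sup}$ is locally Lipschitz on $(0,T)$ and therefore differentiable at almost every $t$.

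For the derivative identity, fix $t_0 \in (0,T)$ at which $u_{sup}$ is differentiable and let $\{x_k\} \subset M$ be any sequence with $u(x_k, t_0) \to u_{sup}(t_0)$; write $\delta_k := u_{sup}(t_0) - u(x_k, t_0) \ge 0$, so $\delta_k \to 0$. The bound $|u_t|_{C^\alpha} \le C(t)$ furnishes a modulus of continuity $\omega(h) \to 0$, uniform in $x$, with $|u_\tau(x,\tau) - u_\tau(x,t_0)| \le \omega(|\tau - t_0|)$ for $\tau$ near $t_0$. For $h > 0$ small, using $u_{sup}(t_0 + h) \ge u(x_k, t_0 + h)$, the fundamental theorem of calculus, and this modulus,
\[
u_{sup}(t_0 + h) - u_{sup}(t_0) \ge h\,u_t(x_k, t_0) - h\,\omega(h) - \delta_k ,
\]
so $\dfrac{u_{sup}(t_0+h) - u_{sup}(t_0)}{h} \ge u_t(x_k, t_0) - \omega(h) - \dfrac{\delta_k}{h}$. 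Passing to a subsequence of $\{x_k\}$ along which $u_t(x_k, t_0) \to \ell := \limsup_k u_t(x_k, t_0)$ (still maximizing), sending $k \to \infty$ with $h$ fixed, and then letting $h \to 0^+$, we obtain $u_{sup}'(t_0) \ge \ell$. Running the same estimate for $h < 0$, where division by $h$ reverses the inequality, gives $u_{sup}'(t_0) \le \ell$. Hence $u_{sup}'(t_0) = \limsup_k u_t(x_k, t_0)$ for every maximizing sequence; applying this to a subsequence realizing the corresponding $\liminf$ forces $\liminf_k u_t(x_k, t_0) = u_{sup}'(t_0)$ too, so $\lim_k u_t(x_k, t_0)$ exists and equals $u_{sup}'(t_0)$, which is the claimed formula.

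The crux, and the only place where the compact proof genuinely needs reworking, is that the supremum is in general not attained, so one must argue through maximizing sequences and keep the order of limits straight: let $k \to \infty$ first (to absorb $\delta_k$), then $h \to 0$. The other essential point is that the remainder $\frac{1}{h}\int_{t_0}^{t_0+h} \big(u_\tau(x_k,\tau) - u_\tau(x_k,t_0)\big)\,d\tau$ from the fundamental theorem of calculus be $o(1)$ as $h \to 0$ \emph{uniformly in} $k$; this is exactly what the Hölder bound $|u_t|_{C^\alpha} \le C(t)$ supplies, whereas mere boundedness of $u_t$ would recover only the Lipschitz step. The bound $|u| \le C(t)$ is used only to keep $u_{sup}$ finite, and $|\nabla u| \le C(t)$ plays no role in this lemma.
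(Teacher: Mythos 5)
Your proof is correct and follows essentially the same route as the paper's: establish local Lipschitz continuity of $u_{sup}$, then at a differentiable time $t_0$ bound the one-sided difference quotients from below (resp.\ above) using a maximizing sequence together with a one-dimensional expansion of $u(x_k,\cdot)$ in time, with the $C^\alpha$ bound on $u_t$ supplying the uniformity needed to interchange $k\to\infty$ with $h\to 0$. The paper does this via the Mean Value Theorem and an appeal to ``uniformity,'' whereas you use the fundamental theorem of calculus with an explicit modulus $\omega$, which makes the limit interchange airtight; this is a genuine (if small) tightening of the argument.

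One correction you make is worth recording: the paper asserts that the uniformity in $x$ of the time-Lipschitz constant ``follows from $|\nabla u|\le C(t)$,'' but that is a \emph{spatial} gradient bound and does not control time regularity. The correct source of uniformity is the sup-norm part of the hypothesis $|u_t|_{C^\alpha}\le C(t)$, exactly as you use via the fundamental theorem of calculus. Your closing observation that $|\nabla u|\le C(t)$ plays no role in this lemma (it is used later, in the proof of the ODE maximum principle, for the gradient term in the evolution inequality, but not here) is therefore right, and is a useful clarification of the hypotheses.
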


\textbf{Note:} If $u$ attains its max at some point $x \in M$ then we can take the trivial sequence which is constantly equal to $x$.

\textbf{Note:} This is an extension of Hamilton's work \cite{Ha}, explained in \cite{M}, to non-compact manifolds where we allow $\sup$ and $\inf$ instead of just $\max$ and $\min$ on compact manifolds.

\begin{proof}
Fix a $ t \in (0,T)$ and then choose a $\delta > 0$ so that $[t - \delta, t + \delta] \subset (0,T)$. Then choose an $\epsilon$ so that $0 < \epsilon < \delta$ and note that since $u$ is bounded and $C^1$ on $M \times (0,T)$ we know that for every $x \in M$, there exists some Lipschitz constant $K > 0$, depending on $t$ and $\epsilon$, so that $u(x,t+ \epsilon) - u(x,t) \le K \epsilon$. Note that the constant $K$ is uniform in $x$ by the assumption $|\nabla u| \le C(t)$.

Now for each $\epsilon > 0$ we can find a sequence $\{x_k^{\epsilon}\}$ so that $\displaystyle u_{sup}(t + \epsilon) = \lim_{k \rightarrow \infty} u(x_k^{\epsilon},t+ \epsilon)$ and hence
\begin{align}
u_{sup}(t + \epsilon) &= \lim_{k \rightarrow \infty} u(x_k^{\epsilon},t+ \epsilon) \le \limsup_{k \rightarrow \infty} u(x_k^{\epsilon},t) + K\epsilon 
\\&\le \lim_{k \rightarrow \infty}u(x_k^0,t) + K \epsilon = u_{sup}(t) + K \epsilon
\end{align}
where the second inequality follows from the fact that $\displaystyle u_{sup}(t) = \lim_{k \rightarrow \infty} u(x_k^0,t)$. So we have found that $u_{sup}(t + \epsilon) - u_{sup}(t) \le K \epsilon$. Repeating this argument for $- \delta < \epsilon < 0$ we conclude that $u_{sup}$ is a locally Lipschitz function on $(0,T)$ and hence differentiable at almost every time $t$.

Let $t \in (0,T)$ be a time where $u_{sup}$ is differentiable and let $\{x_k\}$ be a sequence so that $\displaystyle \lim_{k \rightarrow \infty} u(x_k,t) = \sup_{x \in M}u(x,t)$. Then by the Mean Value Theorem, for every $0 < \epsilon < \delta$ we can choose a $s_k^{\epsilon} \in (t,t +\epsilon)$ so that $u(x_k,t + \epsilon) = u(x_k,t) + \epsilon \frac{\partial u(x_k,s_k^{\epsilon})}{\partial t}$ and so
\begin{align}
u_{sup}(t + \epsilon) &\ge \limsup_{k \rightarrow \infty} u(x_k, t + \epsilon) = \limsup_{k \rightarrow \infty} \left [u(x_k,t) + \epsilon \frac{\partial u(x_k,s_k^{\epsilon})}{\partial t} \right ]
\\&= u_{sup}(t) +\epsilon \limsup_{k \rightarrow \infty}\frac{\partial u(x_k,s_k^{\epsilon})}{\partial t}
\end{align}
so then by rearranging we find
\begin{align}
\frac{u_{sup}(t + \epsilon) - u_{sup}(t)}{\epsilon} \ge \limsup_{k \rightarrow \infty}\frac{\partial u(x_k,s_k^{\epsilon})}{\partial t}
\end{align}
and so by letting $\epsilon \rightarrow 0$ we find that 
\begin{align}
\displaystyle \frac{d u_{sup}(t)}{dt}&=\lim_{\epsilon \rightarrow 0} \frac{u_{sup}(t + \epsilon) - u_{sup}(t)}{\epsilon} 
\\&\ge \lim_{\epsilon \rightarrow 0} \limsup_{k \rightarrow \infty}\frac{\partial u(x_k,s_k^{\epsilon})}{\partial t}\ge \limsup_{k \rightarrow \infty} \frac{\partial u(x_k,t)}{\partial t}\label{switchingLimits}
\end{align}
where we are justified in switching the limits in \eqref{switchingLimits} because of the uniformity that the assumption $|u_t|_{C^{\alpha}}\le C(t)$ provides.

Now if we repeat this argument for $- \delta < -\epsilon < 0$  we will get the following
\begin{align}
\frac{d u_{sup}(t)}{dt} \le \liminf_{k \rightarrow \infty} \frac{\partial u(x_k,t)}{\partial t}
\end{align}

Putting this all together we see that
\begin{align}
\limsup_{k \rightarrow \infty} \frac{\partial u(x_k,t)}{\partial t} \le \frac{d u_{sup}(t)}{dt} \le \liminf_{k \rightarrow \infty} \frac{\partial u(x_k,t)}{\partial t}
\end{align}
which tells us that $\displaystyle \lim_{k \rightarrow \infty} \frac{\partial u(x_k,t)}{\partial t}$ must converge at a differentiable time of $u_{sup}(t)$ and equal its derivative.
\end{proof}

\begin{proof} 
By the previous Lemma we know that $u_{sup}(t)$ is locally Lipschitz and hence differentiable almost everywhere in $[0,T)$. If we let $t \in [0,T)$ be a differentiable time and $\{x_k\}$ a sequence so that $\displaystyle \lim_{k\rightarrow \infty}u(x_k,t) = \sup_{x\in M}u(x,t)$, $|\nabla u(x_k,t)| < \frac{1}{k}$ and $\nabla_i\nabla_j u(x_k,t) < \frac{1}{k}g_{ij}$, which is guaranteed by the maximum principle at infinity, then we find

\begin{align}
&\frac{du_{sup}}{dt}(t) = \lim_{k \rightarrow \infty} \frac{\partial u}{\partial t}(x_k,t) 
\\&\le \limsup_{k \rightarrow \infty} \left (H^{ij} \nabla_i\nabla_j u(x_k,t) + \langle X(x_k,u,\nabla u,t), \nabla u(x_k,t)\rangle + F(u(x_k,t)) \right )
\\&\le \limsup_{k \rightarrow \infty} \left ( \frac{nC_0}{k} + \frac{|X|}{k} + F(u(x_k,t))\right )
\\&\le F\left (\limsup_{k \rightarrow \infty}u(x_k,t)\right) = F(u_{sup}(t))
\end{align}
and so we have that, at a differentiable time $t$
\begin{align}
\frac{du_{sup}}{dt}(t)\le  F(u_{sup}(t))
\end{align}

At this point we follow the argument from \cite{CM}. Now let $\varphi :[0,T') \rightarrow \R$ be as in the statement of the Theorem and for $\epsilon > 0$ let $\varphi_{\epsilon}:[0,T_{\epsilon}) \rightarrow \R$ be the maximal solution of the family of ODEs
\begin{align}
\begin{cases}
\varphi_{\epsilon}'(t) &= F(\varphi_{\epsilon}(t))
\\ \varphi_{\epsilon}(0) &= u_{sup}(0) + \epsilon
\end{cases}
\end{align}

Since $F$ is Lipschitz on compact sets we can restrict ourselves to $[0,T_{\delta}]$ for $T_{\delta} < \min\{T,T',T_0\}$ where we know that $u$ and $\varphi_{\epsilon}$ are bounded, for small enough $\epsilon$, and hence solutions to the above ODE have continuous dependence on the initial conditions (over compact time intervals). Hence using the fact that the family of functions $\varphi_{\epsilon}$ is uniformly Lipschitz for small enough $\epsilon$ we find that $\varphi_{\epsilon} \rightarrow \varphi$ uniformly on $[0,T_{\delta}]$ for any $T_{\delta}<\min\{T,T',T_0\}$ as $\epsilon \rightarrow 0$.

Now fix $\epsilon > 0$ and for sake of contradiction assume that there is some positive time so that $u_{sup}(t) > \varphi_{\epsilon}(t)$ and let $\bar{t}>0$ be the infimum of all such times which we know is $\not = 0$ since $u_{sup}(0) = \varphi_{\epsilon}(0) - \epsilon$. So $u_{sup}(\bar{t}) = \varphi_{\epsilon}(\bar{t})$ and hence we can let $\Phi_{\epsilon}(t) = \varphi_{\epsilon}(t) - u_{sup}(t)$. Then at differentiable times for $u_{sup}(t)$ in the interval $[0,\bar{t})$ we know that $\Phi_{\epsilon}(t) > 0$ and 

\begin{align}
\Phi_{\epsilon}'(t) \ge F(\varphi_{\epsilon}(t)) - F(u_{sup}(t)) \ge - C_{\epsilon}(\varphi_{\epsilon}(t) - u_{sup}(t)) = -C_{\epsilon}\Phi_{\epsilon}(t)
\end{align}
where $C_{\epsilon}$ is a local Lipschitz constant for $F$ in the interval $\{\varphi_{\epsilon}(s) : 0\le s \le \bar{t}\}$ and this differential inequality hold for a.e. $t \in [0,\bar{t}]$. 

Then by integrating this equation we find that $\Phi_{\epsilon}(t) \ge \Phi_{\epsilon}(0) e^{-C_{\epsilon}t} = \epsilon e^{-C_{\epsilon}t}$ and so in particular $\Phi_{\epsilon}(\bar{t}) \ge \epsilon e^{-C_{\epsilon}\bar{t}} > 0$ but that contradicts the fact that $\Phi_{\epsilon}(\bar{t}) = 0$.

So $u_{sup}(t) \le \varphi_{\epsilon}(t)$ for every $t \in [0,T_{\delta})$ and so if we let $\epsilon \rightarrow 0$ then we have that $u_{sup}(t) \le \varphi(t)$ for every $t \in [0,T_{\delta})$. Since $\delta > 0$ was arbitrary, we have proven the desired result for $[0,\min\{T,T'\})$.
\end{proof}

\section{Non-Compact Solutions to IMCF in Hyperbolic Space}
\label{sec:2}

In this section we apply the ODE maximum principle at infinity to the study of non-compact solutions of IMCF in $\Hy^{n+1}$. Our aim is to highlight the differences from the compact case of IMCF but we do not intend to include all of the standard details. Therefore, once the usefulness of the ODE maximum principle has been demonstrated and the different details that show up in this case are illustrated we will point to standard references to finish the proof of Theorem \eqref{LTE}. For detailed computations of all the evolution equations used in this paper as well as a thorough treatment of short time existence, similar to  what is done in \cite{CG2}, see my dissertation \cite{BA} .

It is convenient for us to use the upper half space model of $\Hy^{n+1}$ which is defined on the space $\R^{n+1}_+ = \R^n \times (0,\infty)$ with coordinates $(x_1,...,x_n,y)$ and the following metric

\begin{align}
\bar{g}=\frac{1}{y^2} \left ( dx_1^2 + ...+dx_n^2 + dy^2\right )
\end{align} 
where we denote the coordinate basis vectors as $\partial_{x_1},...,\partial_{x_n}, \partial_{y}=\partial_{x_{n+1}}$. In particular we will be looking at solutions which can be written as graphs over $\R^n \times \{0\}$, i.e. if $y(x,t): \R^n\times [0,\infty) \rightarrow \R$ then $\varphi(x,t) = (x, y(x,t))$ and $\Sigma_t = \varphi(\R^n\times\{t\})$. Then we can define $v = \sqrt{1 + |\nabla^0y|^2}$ where $\nabla^0$ denotes derivatives w.r.t. the flat metric on $\R^{n+1}$. It will also be useful to define $w = \bar{g}(\nu, \eta)= \frac{1}{vy}$, where $\eta = -\partial_y$  and $\nu$ is the downward pointing normal (the downward pointing normal makes IMCF forwards parabolic). We will use $\nabla$ and $\langle \cdot,\cdot\rangle$ for the connection and metric with respect to $\Sigma_t$.

\subsection{Short Time Existence}\label{Subsec:STE}
 The goal of this subsection is to prove short time existence to IMCF in Theorem \eqref{STE} for the case of non-compact graphs over the plane $\{y=0\}$, satisfying the conditions of Theorem \ref{LTE}. We mostly follow the proof of short time existence given in \cite{CG2} where Gerhardt shows short time existence in the compact case. We use standard parabolic Holder spaces which we now define for the convenience of the reader.
 
 \begin{Def}
In $\R^n \times [0,T)$ we defined the parabolic distance between $p_1=(\textbf{x}_1,t_1)$ and $p_2=(\textbf{x}_2,t_2)$ as
\begin{align}
\rho(p_1,p_2) = |\textbf{x}_1-\textbf{x}_2| + |t_1-t_2|^{1/2}
\end{align}
\end{Def}

\begin{Def}
For $u: \R^n \times [0,T) \rightarrow \R$, $\alpha \in (0,1)$ we define
\begin{align}
[u]_{\alpha/2,\alpha} &= \sup_{p_1\not = p_2} \frac{|u(p_1) - u(p_2)|}{\rho(p_1,p_2)^{\alpha}}
\\ |u|_0 &= \sup_{\R^n \times [0,T)} |u|
\\|u|_{\alpha/2,\alpha} &= |u|_0 + [u]_{\alpha/2,\alpha}
\end{align}
\end{Def}

\begin{Def}
We define $C^{\alpha/2,\alpha}(\R^n\times [0,T))$ as the set of all functions $u$ so that $ |u|_{\alpha/2,\alpha} < \infty$. Also, we define $C^{1 + \alpha/2,2+\alpha}$ as the set of all functions $u$ so that
\begin{align}
[u]_{1 + \alpha/2,2 + \alpha} := [u_t]_{\alpha/2,\alpha} + \sum_{i,j=1}^n [u_{x_ix_j}]_{\alpha/2,\alpha} < \infty
\end{align}
and
\begin{align}
|u|_{1 + \alpha/2,\alpha} := |u|_0 + |u_x|_0 + |u_t|_0 + \sum_{i,j=1}^n |u_{x_ix_j}|_0 + [u]_{1 + \alpha/2,2+\alpha}< \infty
\end{align}
\end{Def}
 
First we notice that for $\psi: \Sigma \times [0,T) \rightarrow \Hy^{n+1}$ the following flow

\begin{align} \label{IMCF2}
\left ( \frac{\partial \psi}{\partial t} \right)^{\perp} = \frac{\nu}{H}
\end{align}
is, up to tangential diffeomorphisms, equivalent to IMCF  (See Lemma \eqref{equivalence} below ). So the point of this chapter is to prove short time existence to \eqref{IMCF2} which in turn gives us short times existence to \eqref{IMCF} for bounded graphs in Hyperbolic space satisfying bounds mentioned below.

Now if we write $M_t$ as a graph over $\{y = 0\}$ using a function $y:\R^n \times [0,T) \rightarrow \R$ then we have the expressions $\psi(x,t) = (x, y(x,t))$ and $\bar{\nu} = \frac{y(\nabla^0 y,-1)}{\sqrt{1 + |\nabla^0 y|^2}}$ . So we notice that

\begin{align}
\bar{g} \left ( \frac{\partial \psi}{\partial t}, \nu \right )&= \frac{-1}{y\sqrt{1 + |\nabla^0 y|^2}} \frac{\partial y}{\partial t} = \frac{1}{H} \hspace{0.5cm} 
\\\Rightarrow \hspace{0.5cm}\frac{\partial y}{\partial t} &= \frac{- y\sqrt{1 + |\nabla^0 y|^2}}{H}  = \frac{-1}{w H}  = \frac{-vy}{H}
\end{align} 
where we have used the notation $v := \sqrt{1+|\nabla^0 y|^2}$ and the fact that $w = \bar{g}(\partial_y,\bar{\nu}) = \frac{1}{vy}$.

Now if we use the fact that $H = \frac{n + y\tilde{\delta}^{ij} y_{ij}}{v}$, where we denote $\frac{\partial y}{\partial x_i} := y_i$, $\frac{\partial^2 y}{\partial x_i \partial x_j}:=y_{ij}$ and recall that $\tilde{\delta}^{ij} = \delta^{ij} - \frac{y^iy^j}{v^2}$, then we find

\begin{align}\label{IMCF3}
\frac{\partial y}{\partial t} = \frac{-yv^2}{n+y\tilde{\delta}^{ij} y_{ij}} = F(x,y,\nabla^0y, \nabla^0\nabla^0y)
\end{align}
where $F:\R^n\times \R\times \R^n\times \R^{n \times n} \rightarrow \R$, denoted $F(x, u, p_i, a_{ij})$, is a fully nonlinear operator and hence \eqref{IMCF3} is a fully nonlinear parabolic PDE.  

\begin{align} 
\frac{\partial F}{\partial a_{kl}} = \frac{yv^2}{(n+y\tilde{\delta}^{ij} y_{ij})^2} y\tilde{\delta}^{kl}  = \frac{y^2}{H^2}\tilde{\delta}^{kl} 
\end{align}

So if our initial condition $y_0(x) \in \Lambda$ where 
\begin{align}
\Lambda := \{y \in C^2(\R^n):& 0 < H_1 < H(x) < H_2 < \infty, 
\\&0 < y_0 < y(x) \le y_1 < \infty \text{ and } v(x) < v_0 < \infty\}
\end{align}
and $H(x)$ is the mean curvature of the graph of $y(x)$ then we have that $\frac{\partial F}{\partial a_{kl}}  \ge \frac{y_0^2}{H_0^2} \delta_{kl}$ as symmetric matrices and so the linearized operator is uniformly parabolic for functions belonging to $\Lambda$.

Now we state and prove short time existence for \eqref{IMCF3} where we will use the notation that $U_T = \R^n \times [0,T)$ throughout.

\begin{Thm} \label{STE}
Let $F$ be the operator defined above and let $y_0 \in \Lambda \cap C^{2+\alpha}(\R^n)$ where $\alpha \in (0,1)$. Then, for any $0 < \beta < \alpha$, the initial value problem
\begin{align} \label{IMCF4}
\begin{cases}
y_t - F(x,y,\nabla^0y, \nabla^0\nabla^0y) &=0
\\ y(x,0) &= y_0(x)
\end{cases}
\end{align}
has a unique solution $y \in C^{ \frac{2 + \beta}{2}, 2 + \beta}(U_{\epsilon})$, where $\epsilon$ depends only on $\beta$ and $y_0$.
\end{Thm}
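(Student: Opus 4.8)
The plan is to solve \eqref{IMCF4} by a Banach fixed point argument built on global linear Schauder theory for the operator $F$ linearized at $y_0$; since the base $\R^n$ is non-compact, the point to track is that every constant must be controlled purely by the uniform quantities built into $\Lambda$ together with $\|y_0\|_{C^{2+\alpha}(\R^n)}$ (henceforth ``the data''). First I would fix a convenient time-extension of the initial datum: let $\bar y_0$ solve the linear parabolic problem $\partial_t\bar y_0-\Delta\bar y_0=f_0-\Delta y_0$ on $U_\epsilon$ with $\bar y_0(\cdot,0)=y_0$, where $f_0(x):=F(x,y_0,\nabla^0y_0,\nabla^0\nabla^0y_0)\in C^{\alpha}(\R^n)$. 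Global Schauder theory puts $\bar y_0\in C^{1+\alpha/2,2+\alpha}(U_\epsilon)$ with norm bounded by the data uniformly for $\epsilon\le1$, and by construction $\partial_t\bar y_0(\cdot,0)=f_0$; moreover, since $y_0\in\Lambda$ and $\Lambda$ is open in the $C^2$ topology, after shrinking $\epsilon$ by an amount depending only on the data we have $\bar y_0(\cdot,t)\in\Lambda$ for all $t<\epsilon$, and there is $\delta_0>0$ (also depending only on the data) with $\bar y_0+w\in\Lambda$ whenever $\|w\|_{C^{1+\beta/2,2+\beta}(U_\epsilon)}\le\delta_0$, so $F$ stays uniformly parabolic along $\bar y_0$ and its small perturbations.

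Next I would pass to the equation for the correction. Writing $y=\bar y_0+w$ with $w(\cdot,0)=0$ and Taylor expanding $F$ about $(x,\bar y_0,\nabla^0\bar y_0,\nabla^0\nabla^0\bar y_0)$, problem \eqref{IMCF4} becomes
\begin{align}
\mathcal Lw:=w_t-a^{ij}w_{ij}-b^iw_i-cw=g_1+Q(w),\qquad w(\cdot,0)=0,
\end{align}
where $a^{ij},b^i,c$ are the partial derivatives of $F$ in the $a_{ij},p_i,u$ slots evaluated along $\bar y_0$, so $\mathcal L$ is uniformly parabolic with coefficients bounded in $C^{\alpha/2,\alpha}(U_\epsilon)$ by the data; $Q$ is the quadratic-and-higher Taylor remainder, with $Q(0)=0$ and the superposition estimate $\|Q(w_1)-Q(w_2)\|_{C^{\beta/2,\beta}(U_\epsilon)}\le C(\|w_1\|+\|w_2\|)\,\|w_1-w_2\|$ in the $C^{1+\beta/2,2+\beta}(U_\epsilon)$ norm, valid while $\bar y_0+w_i\in\Lambda$; and $g_1(x,t):=F(x,\bar y_0,\nabla^0\bar y_0,\nabla^0\nabla^0\bar y_0)-\partial_t\bar y_0$ lies in $C^{\alpha/2,\alpha}(U_\epsilon)$ (composition of the $C^{\alpha/2,\alpha}$-bounded functions $\bar y_0,\nabla^0\bar y_0,\nabla^0\nabla^0\bar y_0$ with the smooth $F$, minus $\partial_t\bar y_0$), bounded by the data. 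The reason for the extension in the first step is that $g_1(\cdot,0)=f_0-f_0=0$.

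Now for the fixed point. Global Schauder existence and uniqueness for $\mathcal L$ (uniformly parabolic, coefficients globally bounded in $C^{\alpha/2,\alpha}\subset C^{\beta/2,\beta}$) furnishes a bounded solution operator $\mathcal S:C^{\beta/2,\beta}(U_\epsilon)\to\{w\in C^{1+\beta/2,2+\beta}(U_\epsilon):w(\cdot,0)=0\}$, with $\|\mathcal S g\|_{C^{1+\beta/2,2+\beta}(U_\epsilon)}\le C\|g\|_{C^{\beta/2,\beta}(U_\epsilon)}$ and $C$ uniform for $\epsilon\le1$; securing this globally uniform linear theory on the non-compact base is exactly where the difficulty lies, and it works only because the coefficients are globally controlled. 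Fixed points of $\Phi(w):=\mathcal S(g_1+Q(w))$ solve \eqref{IMCF4}. Since $g_1(\cdot,0)=0$ and $\|g_1\|_{C^{\alpha/2,\alpha}(U_\epsilon)}$ is bounded by the data, interpolating down to the exponent $\beta<\alpha$ on the thin slab $U_\epsilon$ gives $\|g_1\|_{C^{\beta/2,\beta}(U_\epsilon)}\le C\,\epsilon^{(\alpha-\beta)/2}\to0$ as $\epsilon\to0$ — this is precisely where the loss from $\alpha$ to $\beta$ in the statement is spent. Hence, choosing first $\delta\le\delta_0$ small enough that on the closed $\delta$-ball of $C^{1+\beta/2,2+\beta}(U_\epsilon)$ the map $\Phi$ satisfies $\|\mathcal S(Q(w))\|\le\delta/2$ and has Lipschitz constant $\le\tfrac12$ (both possible because $Q$ is quadratic), and then $\epsilon$ small enough that $\|\mathcal S(g_1)\|\le\delta/2$, the map $\Phi$ contracts that ball; $\delta$ and then $\epsilon$ depend only on the data and $\beta$, as the statement requires, and the unique fixed point gives $y=\bar y_0+w\in C^{1+\beta/2,2+\beta}(U_\epsilon)$. (An equivalent route is to apply the inverse function theorem in Banach spaces to $y\mapsto(y_t-F(\cdot,y,\nabla^0y,\nabla^0\nabla^0y),\,y(\cdot,0))$ at $\bar y_0$, whose linearization is an isomorphism by the same linear theory.)

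Finally, uniqueness in $C^{1+\beta/2,2+\beta}(U_\epsilon)$ follows by the usual argument: two solutions with the same initial value have difference $z$ satisfying a linear, uniformly parabolic equation with bounded coefficients (obtained by Taylor-expanding $F$ along the segment joining them) with $z(\cdot,0)=0$, so $z\equiv0$ by linear uniqueness, first on the time-open set where both solutions remain in $\Lambda$ and then on all of $[0,\epsilon)$ by continuity. The main obstacle throughout, and the reason the hypotheses of Theorem~\ref{LTE} are imposed, is the need for \emph{uniform} global Schauder estimates and uniform parabolicity over the non-compact base: this is exactly what forces $\Sigma_0$ to be a graph that is bounded, has bounded gradient, stays uniformly away from $\{y=0\}$, and has $H$ and $|A|$ under control, so that the frozen coefficients and all of the constants above are genuinely independent of the point of $\R^n$.
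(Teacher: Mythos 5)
Your proof follows the same overall strategy as the paper through the construction of the time-extended initial datum: the paper's $\hat{y}$ is your $\bar y_0$ (same linear parabolic problem, same need to check $\hat y(\cdot,t)\in\Lambda$ for short time), and both proofs linearize $F$ along this extension and lean on global Schauder theory for the resulting uniformly parabolic operator with coefficients controlled by $\Lambda$ and $\|y_0\|_{C^{2+\alpha}}$. Where you part ways is in how the solution to the nonlinear problem is actually produced. The paper applies the inverse function theorem to $\Phi(y)=(y_t-F,\,y(\cdot,0))$ at $\hat y$, obtaining a diffeomorphism onto a neighborhood $Z$ of $(\hat f,y_0)$, and then reaches the nonlinear problem by approximating $\hat f$ with cutoffs $f_\epsilon=\hat f\,\eta_\epsilon$ that vanish for $t\le\epsilon$, arguing via Ascoli and a diagonal sequence that $(f_\epsilon,y_0)\in Z$ for small $\epsilon$ (so the solution of $\Phi(y)=(f_\epsilon,y_0)$ solves the true equation on $U_\epsilon$). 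You instead pass directly to the correction $w=y-\bar y_0$ and run a Banach contraction, and the decisive observation is quantitative: since $g_1=-\hat f$ vanishes at $t=0$ and is bounded in $C^{\alpha/2,\alpha}$, the interpolation estimate $\|g_1\|_{C^{\beta/2,\beta}(U_\epsilon)}\le C\epsilon^{(\alpha-\beta)/2}$ makes the inhomogeneous term small on thin slabs, which combined with the quadratic smallness of the Taylor remainder $Q$ yields the contraction. This is genuinely a different (and, I would say, cleaner) way to close the argument: it makes the $\alpha\to\beta$ loss and the dependence of $\epsilon$ on the data completely explicit, and it avoids the compactness/diagonalization step in the paper's Step 3, whose conclusion that $(f_\epsilon,y_0)\in Z$ really requires global $C^{\beta/2,\beta}(U_{T_0})$-smallness of $f_\epsilon-\hat f$ rather than just convergence on compact subsets (a smallness that in fact follows from essentially your interpolation estimate, since $f_\epsilon-\hat f$ is supported in $\{t\le 2\epsilon\}$). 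The uniqueness argument you give is the same as the paper's. Two very minor caveats on your write-up: the Lipschitz estimate for $Q$ in the parabolic $C^{\beta/2,\beta}$ norm requires that $F$ and its derivatives up to second order stay $C^\alpha$-bounded on the ranges allowed by $\Lambda$ — true here but worth stating — and you should note that the uniformity of the Schauder constant for $\epsilon\le 1$ uses the zero initial condition for $w$.
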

\begin{proof}
This proof will be given in three steps. 

\textbf{Step 1:} Let $\hat{y}$ be a solution to the linear parabolic problem
\begin{align}
\begin{cases}
\hat{y}_t  - \Delta \hat{y} &= F(x, y_0,\nabla^0y_0,\nabla^0\nabla^0y_0) - \Delta y_0
\\ \hat{y}(x,0) &= y_0(x)
\end{cases}
\end{align}
by standard linear PDE theory \cite{K} we know that this PDE has a solution $\hat{y} \in C^{2 + \alpha, \frac{2 + \alpha}{2}}(U_T)$ (for any $T > 0$)  with the following bound (independent of $T$)
\begin{align}
\|\hat{y}\|_{\frac{2 +\alpha}{2},2 + \alpha} \le  N(n,\alpha) \left (\|y_0\|_{2 + \alpha} + \|F(y_0)\|_{\alpha} + \|\Delta y_0\|_{\alpha} \right )
\end{align}
where we note that $\|y_0\|_{2 + \alpha} \le C_1, \|\Delta y_0\|_{\alpha} \le C_2$ is implied by our assumptions on $y_0$. 

The bound on $\|F(y_0)\|_{\alpha}$ follows from the fact that $\hat{y}, \nabla^0\hat{y}, \nabla^0\nabla^0 \hat{y} \in C^{\alpha}(\R^{n})$ combined with the fact that if  $u,v \in C^{\alpha}(\R^{n})$ then $uv \in C^{\alpha}(\R^{n})$ and $\frac{u}{v} \in C^{\alpha}(\R^{n})$ as long as $v > v_0 > 0$, is bounded away from zero (Also $F(y_0) = -\frac{v(y_0) y_0}{H(y_0)}$).

Now we can choose $T_0 \le T$ small enough so that for all $t \in [0,T_0]$ 
\begin{align}\label{hatyB}
\hat{y}(\cdot, t) \in \Lambda
\end{align}
where this follows from the fact that $\|\hat{y}\|_{\frac{2 +\alpha}{2},2 + \alpha} \le C$ and hence cannot immediately escape $\Lambda$ by continuity in $t$ of the $C^{2 + \alpha}$ norm.

The idea is that we are going to linearize the nonlinear operator \eqref{IMCF3} at the solution $\hat{y}(\cdot,t)$ and so \eqref{hatyB} implies that $F$ is parabolic at $\hat{y}$.

Now it will also be useful to define $\hat{f}(x,t) \in C^{\alpha, \frac{\alpha}{2}}(U_{T_0})$ to be
\begin{align}
\hat{f} = \hat{y}_t - F(x, \hat{y},\nabla^0\hat{y},\nabla^0\nabla^0\hat{y})
\end{align}
from which we see that $\hat{f}(x,0) = 0$.

\textbf{Step 2:} In this step we would like to employ the Inverse Function Theorem to the map $\Phi : V: = \Lambda \cap C^{2 + \beta, \frac{2 + \beta}{2}}(U_{T_0}) \rightarrow W \subset C^{\beta, \frac{\beta}{2}}(U_{T_0}) \times C^{2 + \beta}(\R^n)$ defined by
\begin{align}
\Phi(y) = \left(y_t - F(x, y,\nabla^0y,\nabla^0\nabla^0y), y(x, 0)\right )
\end{align}
where $V$ is a neighborhood of $\hat{y}$ and $W$ is a neighborhood of $\Phi(\hat{y}) = (\hat{f}, y_0)$.

We notice that $\Phi$ is continuously differentiable on $V$ and its derivative, $D\Phi$ evaluated at $\hat{y} \in V$, is equal to the following operator
\begin{align}
D\Phi(\hat{y}) &: C^{2 + \beta,\frac{2 + \beta}{2}} \rightarrow C^{\beta,\frac{\beta}{2}} \times C^{2 + \beta}
\\D\Phi(\hat{y})[\eta] &= \left (\eta_t- \frac{\partial F}{\partial a_{ij}} \eta_{ij} - \frac{\partial F}{\partial p_i} \eta_i- \frac{\partial F}{\partial u} \eta, \eta(0) \right )
\end{align}
defined for $\eta \in C^{2 + \beta, \frac{2 + \beta}{2}}(U_{T_0})$. We have already explicitly computed $\frac{\partial F}{\partial a_{ij}}$ above and noticed that it was an elliptic operator but we can also calculate $\frac{\partial F}{\partial p_i}$ and $\frac{\partial F}{\partial u}$, as follows.
\begin{align}
\frac{\partial F}{\partial p_i}|_{\hat{y}} &= \frac{-2y y_i}{n+y\tilde{\delta}^{ij} y_{ij}} +  \frac{y^2v^2}{(n+y\tilde{\delta}^{ij} y_{ij})^2} \left (\frac{2y_iy_ky_j}{v^4} y_{kj} -\frac{2y_j}{v^2}y_{ij} \right ) 
\\ &\Rightarrow \hspace{0.25cm}  \left |\frac{\partial F}{\partial p_i} \right | \le \frac{2y}{H} + \frac{2y^2}{H^2} |\nabla^0\nabla^0y|
\\ \frac{\partial F}{\partial u}|_{\hat{y}}  &= \frac{-v^2}{n+y\tilde{\delta}^{ij} y_{ij}} + \frac{yv^2}{(n+y\tilde{\delta}^{ij} y_{ij})^2}\tilde{\delta}^{ij} y_{ij} = \frac{-nv^2}{(n+y\tilde{\delta}^{ij} y_{ij})^2} = \frac{-n}{H^2}  \hspace{0.25cm}
\\&\Rightarrow \hspace{0.25cm}  \left |\frac{\partial F}{\partial u} \right | \le \frac{n}{H^2}
\end{align}
So we see that these coefficients do not present a problem as long as $\hat{y}(\cdot,t) \in \Lambda$, which we confirmed in Step 1, and so the first component of $D\Phi(\hat{y})[\eta]$ is a linear parabolic operator to which standard existence and uniqueness results for linear PDE applies \cite{K} . 

So by standard linear parabolic theory \cite{K} we have that $D\Phi[\hat{y}]$ is one-to-one and onto. Then the inverse function theorem says that there is some $\rho > 0$ so that $\Phi$ is a $C^1$-diffeomorphism from $B_{\rho}(\hat{y}) \subset V$ onto a neighborhood $Z \subset W$ of $(\hat{f},y_0)$.

\textbf{Step 3:} For this step, our goal is to show that the procedure in Step 2 gives us a solution to \eqref{IMCF3} for a short time. For this we let $\epsilon > 0$ and choose $\eta_{\epsilon} \in C^{\infty}([0,1])$ s.t. $0\le \eta_{\epsilon} \le 1$, $0 \le \frac{\partial \eta_{\epsilon}}{\partial t} \le 2 \epsilon^{-1}$,
\begin{align}
\eta_{\epsilon}(t)=
\begin{cases}
0, & 0 \le t \le \epsilon
\\1, &2\epsilon \le t \le 1
\end{cases}
\end{align}
and define $f_\epsilon = \hat{f}\eta_{\epsilon}$. Then, as shown in \cite{BA},\cite{CG2}, $f_{\epsilon} \in C^{\alpha, \frac{\alpha}{2}}(U_{T_0})$ with uniformly bounded norm (in $\epsilon > 0$). Now for each $B_{n}(0) \times [0,T_0]$, $n \in \N$, we can apply Ascoli's theorem to extract a subsequence so that $f_{\epsilon_{k_n}} \rightarrow \hat{f}$ as $\epsilon_{k_n} \rightarrow 0$ in $C^{\beta, \frac{\beta}{2}}(B_{n}(0) \times [0,T_0])$ for all $0 < \beta < \alpha$. Then by choosing a diagonalizing sequence, $\{f_{\epsilon_k}\} = \{f_{\epsilon_{k_k}}\}$, we find $f_{\epsilon_k} \rightarrow \hat{f}$ as $\epsilon_k \rightarrow 0$ in $C^{\beta, \frac{\beta}{2}}(U_{T_0})$, on compact subsets, for all $0 < \beta < \alpha$.

So for small enough $\epsilon$ we have that the pair $(f_{\epsilon},y_0) \in Z$ and hence by Step 2 there exists a unique solution $y^{\epsilon} \in B_{\rho}(\hat{y})$ of the equation
\begin{align}
\Phi(y^{\epsilon}) = (f_{\epsilon},y_0)
\end{align}
which is equivalent to saying that $y$ solves the initial value problem
\begin{align}
y^{\epsilon}_t- F(x,y^{\epsilon},\nabla^0y^{\epsilon}, \nabla^0\nabla^0y^{\epsilon}) &=f_{\epsilon}
\\ y^{\epsilon}(x,0) &= y_0(x,0)
\end{align}
and from the definition $f_{\epsilon} = \hat{f}\eta_{\epsilon}$ for $0 \le t \le \epsilon$ we have that $y^{\epsilon}$ solves the original nonlinear initial value problem \eqref{IMCF4} in $U_{\epsilon} = \R^n \times [0,\epsilon)$.

Then we also know that $y(\cdot,t) \in \Lambda$ for $t \in [0,\epsilon')$ for $0 < \epsilon' \le \epsilon$ since $y \in B_{\rho}(\hat{y})$ and hence cannot immediately escape $\Lambda$. This concludes the proof of existence in Theorem \eqref{STE}.

The proof of uniqueness follows exactly as it does in the compact case so the reader is directed to \cite{BA} or \cite{CG2} for details.

\end{proof}

\begin{Thm} \label{HigherRegularity}
Let $y_0 \in \Lambda \cap C^{m+2+\alpha}(\R^n)$ where $\alpha \in (0,1)$, $m \ge 1$. Then the solution to the initial value problem \eqref{IMCF4} is of class $y \in C^{ \frac{m+2 + \beta}{2},m+ 2 + \beta}(U_{\epsilon})$
\end{Thm}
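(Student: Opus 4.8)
The plan is to run a standard parabolic bootstrap on the solution already furnished by Theorem \eqref{STE}, inducting on $m$ and using repeatedly the fact --- established inside the proof of Theorem \eqref{STE} --- that the solution $y$ of \eqref{IMCF4} stays in $\Lambda$ throughout the time interval $[0,\epsilon)$, so that the linearization of $F$ along $y$ is \emph{uniformly} parabolic on the whole slab $U_\epsilon=\R^n\times[0,\epsilon)$, with ellipticity, coefficient bounds and H\"older norms all controlled by the constants defining $\Lambda$ together with the uniform $C^{\frac{2+\beta}{2},2+\beta}$ estimate for $y$ produced there. The base of the induction is Theorem \eqref{STE} itself (the case ``$m=0$''). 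No shrinking of $\epsilon$ is needed: we only improve the regularity of a solution that already exists on $U_\epsilon$.

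First I would observe that $F(x,u,p_i,a_{ij})$ carries no genuine $x$-dependence: the expression $-\,u\,v^2/(n+u\,\tilde\delta^{ij}a_{ij})$ with $v^2=1+|p|^2$ is a smooth function of $(u,p,a)$ alone on the open set where $u>0$ and $n+u\,\tilde\delta^{ij}a_{ij}>0$, and $(y,\nabla^0 y,\nabla^0\nabla^0 y)(x,t)$ takes values in a fixed compact subset of that set because $y(\cdot,t)\in\Lambda$. Differentiating \eqref{IMCF3} in a fixed spatial direction $x_k$ and setting $w=y_{x_k}$, the chain rule yields the linear equation
\begin{align}
w_t = \frac{\partial F}{\partial a_{ij}}\, w_{ij} + \frac{\partial F}{\partial p_i}\, w_i + \frac{\partial F}{\partial u}\, w,
\end{align}
whose coefficients are the (explicitly computed above) smooth functions of $(u,p,a)$ evaluated along $(y,\nabla^0 y,\nabla^0\nabla^0 y)$, and whose initial datum is $w(\cdot,0)=(y_0)_{x_k}$. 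Assuming $y_0\in\Lambda\cap C^{m+2+\alpha}(\R^n)$ with $m\ge 1$ and applying the result at level $m-1$ to $y_0\in C^{(m-1)+2+\alpha}$ gives $y\in C^{\frac{m+1+\beta}{2},\,m+1+\beta}(U_\epsilon)$ for every $\beta<\alpha$, hence $\nabla^0\nabla^0 y\in C^{\frac{m-1+\beta}{2},\,m-1+\beta}(U_\epsilon)$; composing with the smooth functions $\partial F/\partial a_{ij}$, $\partial F/\partial p_i$, $\partial F/\partial u$, which stay away from their singular locus, the coefficients of the $w$-equation then lie in $C^{\frac{m-1+\beta}{2},\,m-1+\beta}(U_\epsilon)$ with uniformly bounded norm, while $(y_0)_{x_k}\in C^{m+1+\alpha}(\R^n)$. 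Parabolic Schauder theory for the Cauchy problem \cite{K}, in its form uniform over $\R^n$, yields $w\in C^{\frac{m+1+\beta}{2},\,m+1+\beta}(U_\epsilon)$; since $k$ was arbitrary and since $y_t=F(y,\nabla^0 y,\nabla^0\nabla^0 y)$ (and its further $t$-derivatives) controls the time regularity once the spatial regularity is in hand, this upgrades $y$ to $C^{\frac{m+2+\beta}{2},\,m+2+\beta}(U_\epsilon)$, which closes the induction.

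The only point I expect to require care is the uniformity of the Schauder estimate over the non-compact slab: one localizes on parabolic cylinders, applies the interior and initial-time Schauder estimates with constants depending only on the parabolicity and on the (uniform) H\"older norms of the coefficients and of the initial data, and then passes to the supremum over $\R^n$ --- precisely the mechanism already used to make Theorem \eqref{STE} work in the non-compact setting, so it is not a genuinely new difficulty here. A minor secondary point is that, because $U_\epsilon$ has no lateral boundary, no compatibility conditions beyond the stated regularity of $y_0$ are needed; the successive $t$-derivatives of $y$ at $t=0$ are simply read off from $y_t=F$ and its differentiated forms, which $y$ already satisfies.
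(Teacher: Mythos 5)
Your argument is correct and is precisely the spatial-differentiation plus parabolic Schauder bootstrap that the paper invokes by citing Theorem~2.5.9 of Gerhardt \cite{CG2} and remarking that those arguments are local, so they carry over to the non-compact slab. The paper's proof is a one-line appeal to that reference; you have simply unpacked it, including the correct observations that $F$ has no explicit $x$-dependence, that the coefficient regularity is inherited from $\nabla^0\nabla^0 y$ at the previous inductive level, and that the induction closes on all of $U_\epsilon$ without shrinking the time interval.
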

\begin{proof}
Notice that the arguments in Theorem 2.5.9 in \cite{CG2} are purely local and hence the arguments automatically apply to the non-compact case. 
\end{proof}

\textbf{Note:} The higher regularity in Theorem \ref{HigherRegularity} will be important to us later when we want to apply the Krylov-Safanov estimates to go from $C^2$ estimates to $C^{2,\alpha}$ estimates in Theorem \ref{ContinuationCriterion} since this theorem requires the solution to be at least $C^4$.

\begin{Lem}\label{equivalence}
\eqref{IMCF2} is, up to tangential diffeomorphisms, equivalent to \eqref{IMCF}
\end{Lem}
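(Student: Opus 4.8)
The plan is to prove the two directions of the equivalence separately. One direction is immediate: if $\varphi : \Sigma \times [0,T) \to \Hy^{n+1}$ solves \eqref{IMCF}, then, since $\nu/H$ is a normal vector, splitting $\frac{\partial \varphi}{\partial t}$ into its tangential and normal parts shows that the tangential part vanishes and the normal part equals $\nu/H$, so $\varphi$ solves \eqref{IMCF2} as well, with zero tangential velocity. The content of the lemma is therefore the other direction: a flow prescribing only the normal speed can be converted into one prescribing the full velocity by sliding points tangentially along the evolving hypersurfaces, the two sweeping out the same family $\Sigma_t$.

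Concretely, given a solution $\psi : \Sigma \times [0,T) \to \Hy^{n+1}$ of \eqref{IMCF2}, write $\frac{\partial \psi}{\partial t}(p,t) = \frac{\nu(p,t)}{H(p,t)} + T(p,t)$, where $T(p,t) \in T_{\psi(p,t)}\Sigma_t$ is the tangential part of the velocity. Since $d\psi_t := d(\psi(\cdot,t))$ is a linear isomorphism onto the tangent space of $\Sigma_t$, we pull $T$ back to a time-dependent vector field $V(p,t) := (d\psi_t)^{-1}(T(p,t))$ on $\Sigma$ and solve
\begin{align}
\begin{cases}
\frac{\partial}{\partial t}\sigma_t(p) &= -\,V(\sigma_t(p),t) \\
\sigma_0(p) &= p
\end{cases}
\end{align}
to obtain a family of diffeomorphisms $\sigma_t : \Sigma \to \Sigma$ with $\sigma_0 = \mathrm{id}$. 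Setting $\varphi := \psi \circ \sigma$, i.e. $\varphi(p,t) = \psi(\sigma_t(p),t)$, the chain rule gives $\frac{\partial \varphi}{\partial t} = (\partial_t\psi)\circ\sigma_t + d\psi_t(\partial_t\sigma_t)$, and by construction the two tangential contributions cancel: $T(\sigma_t(p),t)$ coming from the first term against $-\,d\psi_t(V(\sigma_t(p),t)) = -\,T(\sigma_t(p),t)$ coming from the second. This leaves $\frac{\partial \varphi}{\partial t} = \nu/H$, and since $\sigma_t$ is a diffeomorphism and the normal and mean curvature are geometric, this is precisely \eqref{IMCF}, with $\varphi_t(\Sigma) = \psi_t(\Sigma)$ for every $t$. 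Running the same construction with $\sigma_t^{-1}$ gives the reverse correspondence, so the two flows differ only by the tangential reparametrization $\sigma_t$, which establishes the claimed equivalence up to tangential diffeomorphisms.

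The only point requiring care beyond the compact case is the solvability of the ODE for $\sigma_t$: on the non-compact manifold $\Sigma \cong \R^n$ one must check that the integral curves of the time-dependent field $-V$ exist for all $t \in [0,T)$ and depend smoothly on $(p,t)$. This is where the a priori bounds enter. In the graph gauge $\psi(x,t) = (x,y(x,t))$ the velocity $\frac{\partial \psi}{\partial t} = (0,y_t)$ is vertical, so $T$ is its $\bar g$-orthogonal projection onto $T\Sigma_t$; since $(d\psi_t)^{-1}$ is an isometry from $(T\Sigma_t,\bar g)$ onto $(T_p\Sigma, g_t)$, we get $|V|_{g_t} = |T|_{\bar g} \le |\partial_t\psi|_{\bar g} = v/H$, which is uniformly bounded by the conditions defining $\Lambda$ (namely $v \le v_0$ and $H \ge H_1$). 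Moreover the induced metrics $g_t$ are complete, the graph being uniformly bounded away from $\{y = 0\}$ with bounded height and gradient, and a bounded vector field on a complete Riemannian manifold is complete; hence $\sigma_t$ exists on all of $[0,T)$, with smooth dependence on $(p,t)$ following from the regularity of $V$, which is in turn controlled by the bound on $|A|$. I expect this completeness bookkeeping to be the main — indeed essentially the only — obstacle. Once it is in place, $\varphi$ inherits the regularity of $\psi$ and the two formulations have the same maximal existence time, which is exactly what allows the short-time existence for the scalar graph equation \eqref{IMCF3} in Theorem \eqref{STE} to be transferred to \eqref{IMCF}.
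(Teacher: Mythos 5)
Your proof is correct, and your core strategy is the same as the paper's: starting from a solution $\psi$ of the normal-speed flow \eqref{IMCF2}, manufacture a family of diffeomorphisms whose time derivative cancels the tangential component of $\partial_t \psi$, so that the reparametrized flow $\varphi = \psi \circ \sigma$ satisfies the full velocity equation \eqref{IMCF}. The difference is one of gauge: you carry out the tangential/normal decomposition intrinsically, pulling the tangential part $T$ back to a vector field $V$ on $\Sigma$ via $d\psi_t$ and integrating $\partial_t \sigma_t = -V$, whereas the paper works directly in the graph coordinates, writing $\varphi(x,t) = \big(\bar x(x,t), y(\bar x(x,t),t)\big)$ and reading off the explicit ODE $\partial_t \bar x = \frac{y}{vH}\nabla^0 y$ from the requirement $\partial_t \varphi = \nu/H$, then checking by substitution that the vertical component is consistent. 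Your version is ambient-agnostic and makes the cancellation mechanism transparent; the paper's version is more computational but ties directly into the scalar equation \eqref{IMCF3} used in the short-time existence theorem. You also go further than the paper in one respect worth noting: the paper appeals only to ``standard ODE theorems'' for short-time existence of $\bar x$, while you spell out why the flow of $-V$ is complete on the non-compact base — bounding $|V|_{g_t} \le v/H$ via the $\Lambda$-bounds and invoking completeness of the induced metric. This is exactly the non-compact bookkeeping the paper elides, so including it strengthens rather than merely reproduces the argument.
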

\begin{proof}
Given a solution $y(\bar{x},t)$ of \eqref{IMCF2} we let $\varphi(x,t)= \left (\bar{x}(x,t),y(\bar{x}(x,t),t) \right )$ where $\bar{x} : \R^n\times[0,T) \rightarrow \R^n$ and then we can find
\begin{align} \label{Relate}
\frac{\partial \varphi}{\partial t} &= \left ( \frac{\partial \bar{x}}{\partial t}, \frac{\partial y}{\partial t}  + \left \langle \nabla^0 y, \frac{\partial \bar{x}}{\partial t} \right \rangle \right ) = \frac{\nu}{H} = \frac{y}{vH} ( \nabla^0y,-1)
\end{align}

This implies that the ODE for $\bar{x}$ is given by
\begin{align} \label{ODE}
\begin{cases}
\frac{\partial \bar{x}}{\partial t}(x,t) &= \frac{y}{vH} \nabla^0 y
\\ \bar{x}(x,0) &=x 
\end{cases}
\end{align}
where we note that this is an ODE since we have already solved \ref{IMCF2} and hence $\frac{y}{vH} \nabla^0 y$ is a predefined, well controlled function. We can confirm this by substituting the second equation given by \ref{Relate} which shows us the following
\begin{align}
\frac{\partial y}{\partial t}  + \left \langle \nabla^0 y, \frac{\partial \bar{x}}{\partial t} \right \rangle = - \frac{y}{vH} \hspace{0.25cm} &\Rightarrow \hspace{0.25cm} \frac{\partial y}{\partial t}  + \frac{y}{vH} |\nabla^0y|^2 = - \frac{y}{vH} 
\\ &\Rightarrow \hspace{0.25cm} \frac{\partial y}{\partial t}  = \frac{-y}{vH} \left ( 1 + |\nabla^0 y|^2 \right ) = \frac{-yv}{H}
\end{align}

So if we define $\textbf{G}(t,\bar{x}) = \frac{y}{vH} \nabla^0 y$ then Theorem \ref{STE} implies that this function is continuous for a short time and hence we can find short time existence  to \eqref{ODE} by standard ODE Theorems. Combining Theorem \ref{STE} with the standard short time existence result for \eqref{ODE} we then obtain short time existence to \eqref{IMCF}, as desired.
\end{proof}

\subsection{$C^0$ and $C^1$ Estimates of IMCF}\label{Subsec:C0C1Estimates}

We start our study of long time existence of non-compact solutions to IMCF in $\Hy^{n+1}$ by looking at a concrete example of the evolution of horospheres in $\Hy^{n+1}$ and then we show that horospheres act as barriers  in $\Hy^{n+1}$ for hypersurfaces satisfying the hypotheses of Theorem 1.

\begin{Ex} \label{HorosphereEx} Consider the horosphere $y = y_0$ as a graph over $\R^n \times \{0\}$. Then $y$ is just a function of time and $H = n$ and so we find the ODE
\begin{align}
\frac{d y}{d t} = \frac{-y}{n}
\end{align}
which has the solution $y(t) = y_0e^{-t/n}$.
\end{Ex}

In order to show that the above family of examples acts as a barrier for bounded graphs as in Theorem \eqref{LTE} we will need to apply the following version of the Omori-Yau maximum principle which will also allow us to apply Theorem \eqref{ODEmax} in order to obtain important estimates throughout this section.

\begin{Thm} \label{OYMax} \cite{P}
Let $(M,g)$ be a complete, non-compact, Riemannian manifold. If $p \in M$ then define $r(x): M \rightarrow \R$ to be the distance from $x$ to $p$ and assume that the radial Ricci curvature satisfies the following bound
\begin{align}\label{OYRicciBound}
Rc(\nabla r,\nabla r) \ge -C(r^2+1)
\end{align}
for some $C > 0$. Then for every bounded above function $u \in C^2(M)$ there is a sequence of points $\{x_n\} \subset M$ so that

\begin{align}\label{OYLaplacianEst}
u(x_n) > \sup_M u - \frac{1}{n} \hspace{0.5cm} |\nabla u|(x_n) < \frac{1}{n} \hspace{0.5cm} \Delta u(x_n) < \frac{1}{n} 
\end{align}

If instead we assume that the sectional curvature of $2$-planes containing $\nabla r$, $K_r$, satisfies the following bound
\begin{align}\label{OYSectBound}
K_r \ge -C(r^2+1)
\end{align}
for some $C > 0$. Then for every bounded above function $u \in C^2(M)$ there is a sequence of points $\{x_n\} \subset M$ so that
\begin{align}\label{OYHessianEst}
u(x_n) > \sup_M u - \frac{1}{n} \hspace{0.5cm} |\nabla u|(x_n) < \frac{1}{n} \hspace{0.5cm} \nabla\nabla u(x_n) < \frac{1}{n} \langle \cdot,\cdot\rangle
\end{align}
\end{Thm}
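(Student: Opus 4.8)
Theorem \ref{OYMax} is a classical statement (see \cite{P}); the argument one would run is the standard ``auxiliary function'' trick married to the Hessian and Laplacian comparison theorems. The plan is: build a slowly growing function $\gamma$ of the distance $r(x) = \mathrm{dist}(x,p)$ whose Hessian (resp. Laplacian) is bounded above using the curvature hypothesis; perturb $u$ to $u-\epsilon\gamma$, which is bounded above and tends to $-\infty$ at infinity, hence attains an interior maximum at some $x_\epsilon$; read off the first and second order conditions at $x_\epsilon$; and let $\epsilon\to 0$.

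First I would recall the comparison estimates. With $G(r) = C(r^2+1)$, let $h$ solve $h''-Gh=0$, $h(0)=0$, $h'(0)=1$; a direct ODE comparison gives $\tfrac{h'(r)}{h(r)} \le \tfrac1r + c_1 r$ for $r>0$, with $c_1$ depending on $C$ (and $n$ below). Under the sectional bound \eqref{OYSectBound} the Hessian comparison theorem yields $\nabla\nabla r \le \tfrac{h'(r)}{h(r)}\big(\langle\cdot,\cdot\rangle - dr\otimes dr\big)$, and under the weaker Ricci bound \eqref{OYRicciBound} the Laplacian comparison theorem yields $\Delta r \le (n-1)\tfrac{h'(r)}{h(r)}$, both in the support (barrier) sense at the cut locus. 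I would then fix $\gamma(r) = \log(1+r^2)$, which is globally smooth (a smooth function of the smooth function $r^2$), has $\gamma'(r) = \tfrac{2r}{1+r^2}\ge 0$ bounded and $\gamma''(r) = \tfrac{2(1-r^2)}{(1+r^2)^2}\le 2$. Since $\gamma'(r)\big(\tfrac1r + c_1 r\big)$ is bounded on $(0,\infty)$, combining with the comparison estimates produces a constant $C_2=C_2(n,C)$ with $\nabla\nabla\gamma \le C_2\langle\cdot,\cdot\rangle$ (barrier sense) in the sectional case, $\Delta\gamma \le C_2$ in the Ricci case, and $\sup_M|\nabla\gamma| =: C_3 < \infty$.

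Next I would run the perturbation. Let $u^\ast = \sup_M u < \infty$ and fix $\eta>0$; pick $x_0$ with $u(x_0)>u^\ast-\eta$ and set $\gamma_0 = \gamma(r(x_0))$. For $\epsilon>0$ the superlevel set $\{u-\epsilon\gamma \ge u^\ast-\epsilon\gamma_0\}$ is compact and contains $x_0$, so $u-\epsilon\gamma$ attains its maximum at some $x_\epsilon$. At $x_\epsilon$ the first and second derivative tests (justified via Calabi's trick if $x_\epsilon$ is on the cut locus of $p$) give
\begin{align}
\nabla u(x_\epsilon) = \epsilon\,\nabla\gamma(x_\epsilon),\qquad \nabla\nabla u(x_\epsilon) \le \epsilon\,\nabla\nabla\gamma(x_\epsilon) \le \epsilon C_2\,\langle\cdot,\cdot\rangle,
\end{align}
and likewise $\Delta u(x_\epsilon)\le \epsilon C_2$ in the Ricci case; moreover $u(x_\epsilon) \ge u(x_0)-\epsilon\gamma_0 > u^\ast-\eta-\epsilon\gamma_0$ and $|\nabla u(x_\epsilon)| \le \epsilon C_3$. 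Choosing $\epsilon=\epsilon(\eta)$ so small that $\epsilon\gamma_0<\eta$, $\epsilon C_3<2\eta$, $\epsilon C_2<2\eta$, the point $x_\epsilon$ satisfies $u(x_\epsilon)>u^\ast-2\eta$, $|\nabla u|(x_\epsilon)<2\eta$ and $\Delta u(x_\epsilon)<2\eta$ (resp. $\nabla\nabla u(x_\epsilon)<2\eta\langle\cdot,\cdot\rangle$). Taking $\eta=\tfrac1{2n}$ and relabeling the resulting point as $x_n$ gives the asserted sequence in either case.

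The step I expect to be the main obstacle is purely technical: the distance function $r$ is not smooth on the cut locus of $p$, so every application of the comparison inequalities and of the second derivative test at $x_\epsilon$ must be read in the barrier/support sense. The standard fix is Calabi's trick — replace $p$ by a point slightly displaced along a minimizing geodesic to $x_\epsilon$, making the displaced distance smooth near $x_\epsilon$ and an upper barrier for $r$ there — or, alternatively, to first replace $\gamma(r)$ by a genuinely smooth function squeezed between $\gamma(r)$ and a slower-growing function that still satisfies the one-sided Hessian/Laplacian bounds. Everything else in the argument is soft.
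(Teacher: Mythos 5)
The paper does not prove Theorem \ref{OYMax}: it states the result and attributes it to Pigola's thesis \cite{P} (with the earlier special cases in \cite{CY,O,Y}), and then simply invokes it. So there is no in-paper argument to compare yours against; I can only evaluate your reconstruction on its own terms.

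Your reconstruction is correct and is essentially the standard route for the Omori--Yau principle under quadratic curvature decay: build a proper auxiliary function $\gamma(r)$ whose gradient is uniformly bounded and whose Hessian (resp.\ Laplacian) is bounded above via the Hessian (resp.\ Laplacian) comparison theorem applied to $h''=Gh$, perturb to $u-\epsilon\gamma$ so that a genuine maximum appears, read off the first- and second-order information there, and handle the cut locus with Calabi's upper-barrier trick. The Riccati comparison you assert, $h'/h\le \tfrac1r+c_1r$, does hold: taking $\psi(r)=\tfrac1r+br$ with $b>\max\{C/3,\sqrt C\}$ one checks $\psi'+\psi^2\ge G$ while $\psi-\phi\sim(b-C/3)r>0$ near $0$, so the Riccati comparison argument ($w=\psi-\phi$ satisfies $w'\ge-(\psi+\phi)w$) forces $\phi\le\psi$ for all $r$. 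Combined with $\gamma'(r)=\tfrac{2r}{1+r^2}$ and $\gamma''\le 2$, this does give $\sup|\nabla\gamma|<\infty$ and $\nabla\nabla\gamma\le C_2\,g$ (resp.\ $\Delta\gamma\le C_2$), as you claim. The one genuine slip is cosmetic: the superlevel set you consider, $\{u-\epsilon\gamma\ge u^\ast-\epsilon\gamma_0\}$, need not contain $x_0$ since you only know $u(x_0)>u^\ast-\eta$, not $u(x_0)\ge u^\ast$; you should take the set $\{u-\epsilon\gamma\ge u(x_0)-\epsilon\gamma_0\}$, which is what your subsequent inequality chain $u(x_\epsilon)\ge u(x_0)-\epsilon\gamma_0>u^\ast-\eta-\epsilon\gamma_0$ implicitly uses. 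Also, strictly, the Laplacian comparison under $Rc(\nabla r,\nabla r)\ge -G$ produces $\Delta r\le(n-1)\,h'_K/h_K$ with $h_K''=\tfrac{G}{n-1}h_K$ rather than $h''=Gh$, but this only reshuffles the constant and you have already flagged the $n$-dependence. With those bookkeeping adjustments the argument is complete.
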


In the rest of this section we will apply the ODE maximum principle at infinity many times which rests on the application of the Omori-Yau maximum principle to $\Sigma_t$ which we will justify now. Under the assumptions of Theorem \ref{LTE} short time existence, Theorem \ref{STE}, tells us that the bounds in Theorem \ref{LTE} will hold for at least a short time with a maximal existence time of $T < \infty$. 

Then for $t \in [0,T-\epsilon]$, $\epsilon >0$ Theorem \ref{STE} implies $H$ is bounded above and below and $|A|$ is bounded above so the Gauss equations imply that $|Rc|$ and $K_r$ are bounded from below for $t \in [0,T-\epsilon]$ which implies that Theorem \ref{OYMax} applies to $\Sigma_t$ for $t \in [0,T-\epsilon]$. The following estimates will show that it applies for all time.

The following Theorem demonstrates that Examples \ref{HorosphereEx} acts as a barrier for a certain class of non-compact solutions of IMCF. 

\begin{Thm} \label{barriers}
If $\displaystyle 0< \inf_{\R^n} y(x,0) =y_0$ and $\displaystyle \sup_{\R^n} y(x,0) = y_1$  and we assume that $\Sigma_0$ is a hypersurface to which the hypotheses of Theorem 1 apply  then we find that
\begin{align}
y_0 e^{-t/n} \le y(x,t) \le y_1 e^{-t/n}
\end{align}
So horospheres act as barriers for bounded graphs over $\R^n$.
\end{Thm}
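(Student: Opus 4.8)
The plan is to compare the graph function $y(x,t)$ with the horosphere solutions $y_1 e^{-t/n}$ and $y_0 e^{-t/n}$ from Example \ref{HorosphereEx} using the ODE maximum principle at infinity (Theorem \ref{ODEmax}). Recall from \eqref{IMCF3} that $y$ satisfies $\partial_t y = -yv^2/(n + y\tilde\delta^{ij}y_{ij})$. First I would massage this into the quasilinear divergence-type form required by Theorem \ref{ODEmax}: writing $H = (n + y\tilde\delta^{ij}y_{ij})/v$, we have $\partial_t y = -yv/H$, and the second-order part $y\tilde\delta^{ij}y_{ij}$ can be rewritten, up to a first-order term, as a positive-definite contraction $H^{ij}\nabla_i^{g_t}\nabla_j^{g_t} y$ acting on $y$ — one naturally expects $H^{ij}$ to be a multiple of the operator already computed for $\partial F/\partial a_{kl}$, namely $\tfrac{y^2}{H^2}\tilde\delta^{kl}$. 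After collecting the lower-order terms one gets an equation $(\partial_t - H^{ij}\nabla_i\nabla_j)y = \langle X, \nabla y\rangle + F(y)$ with $F(y) = -y/n$ (this is exactly the horosphere ODE), $|H^{ij}| \le C_0$ and $|X| \le C_1(t)$ on $[0,T-\epsilon]$ thanks to the $C^2$ bounds supplied by short time existence.

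Next I would verify the hypotheses of Theorem \ref{ODEmax}: on $[0,T-\epsilon]$ the short time existence discussion already preceding the statement gives $0 < H_1 \le H \le H_2$, $|A| \le A_0$, and the graph bounds $0 < y_0' \le y \le y_1'$, $v \le v_0$, which in particular control $|y|$, $|\nabla^{g_t} y|$, and (via Theorem \ref{HigherRegularity} and parabolic Schauder estimates) the Hölder norm of $y_t$; the Gauss equation then bounds $Rc$ and $K_r$ from below so that the Omori–Yau principle (Theorem \ref{OYMax}) applies to each $\Sigma_t$. Applying Theorem \ref{ODEmax} to $u = y$ with $F(u) = -u/n$ gives $u_{\sup}(t) \le \varphi(t)$ where $\varphi' = -\varphi/n$, $\varphi(0) = y_1$, i.e. $y(x,t) \le y_1 e^{-t/n}$. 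For the lower bound I would apply the same reasoning to $-y$ (note $\partial_t(-y) = \langle X, \nabla(-y)\rangle + (-y)/(-n)$... more carefully, $-y$ satisfies the analogous equation with reaction term $(-y) \cdot(-1/n)$, wait — set $w = -y$, then $\partial_t w = -w \cdot(1/n)\cdot(-1) $, so the reaction is $F(w) = w/n$, and comparison with $\psi' = \psi/n$, $\psi(0) = -y_0$ gives $-y \le -y_0 e^{-t/n}$, i.e. $y \ge y_0 e^{-t/n}$). Alternatively one applies the "inf" version of Theorem \ref{ODEmax} directly. Finally, since these bounds are uniform on $[0,T-\epsilon]$ with constants independent of $\epsilon$, they persist up to $t = T$ and, together with the continuation argument used elsewhere in the paper, for all time.

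The main obstacle is the first step: recasting the fully nonlinear operator \eqref{IMCF3} into the form $(\partial_t - H^{ij}\nabla_i\nabla_j)u = \langle X,\nabla u\rangle + F(u)$ demanded by Theorem \ref{ODEmax}, with a genuinely $u$-dependent — not merely $(x,t)$-dependent — reaction term equal to $-u/n$, and with $H^{ij}$ symmetric positive definite with a uniform bound and $|X|$ controlled. One must be careful that the "extra" terms arising from rewriting $y\tilde\delta^{ij}y_{ij}$ in terms of the intrinsic Hessian $\nabla^{g_t}_i\nabla^{g_t}_j y$ (which differs from $\tilde\delta^{ij}y_{ij}$ by Christoffel corrections and metric-determinant factors) really are first order in $\nabla y$ and absorbable into $\langle X,\nabla y\rangle$, with $X$ bounded on compact time intervals by the established $C^2$ estimates. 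Once that algebraic identity is in hand the rest is a direct invocation of Theorems \ref{ODEmax} and \ref{OYMax} plus the elementary ODE $\varphi' = -\varphi/n$.
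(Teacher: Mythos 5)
Your strategy --- recast the graphical equation into the quasilinear form $(\partial_t - H^{ij}\nabla_i\nabla_j)y = \langle X,\nabla y\rangle - y/n$ that Theorem \ref{ODEmax} requires and then invoke the comparison --- is a genuinely different route from the paper's, and it does work, but the ``main obstacle'' you flag at the end is the one step you never actually carry out. For the record, it goes through: writing $H = (n + y\tilde\delta^{ij}y_{ij})/v$,
\begin{align}
\partial_t y = -\frac{yv}{H} = -\frac{y}{n} - \frac{y\,|\nabla^0 y|^2}{vH} + \frac{y^2}{nvH}\tilde\delta^{ij}\,\nabla^0_i\nabla^0_j y,
\end{align}
so the second-order coefficient is $\frac{y^2}{nvH}\tilde\delta^{ij}$ (your guess $\frac{y^2}{H^2}\tilde\delta^{ij}$ is off by the bounded factor $H/(nv)$), the reaction is exactly $-y/n$, and the remaining first-order piece plus the Christoffel corrections from passing to $\nabla^{g_t}\nabla^{g_t}$ are absorbable into $\langle X,\nabla y\rangle$ with $X$ bounded on $[0,T-\epsilon]$ by the short-time $C^2$ estimates. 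You should also clean up the lower-bound reduction: with $w = -y$ one has $\partial_t w = -\partial_t y = y/n + \cdots = -w/n + \cdots$, so the reaction for $w$ is $-w/n$ (not $w/n$), the comparison ODE is again $\psi' = -\psi/n$ with $\psi(0) = -y_0$, and $\psi(t) = -y_0 e^{-t/n}$; as you note, simply applying the ``inf'' version of Theorem \ref{ODEmax} is cleaner.

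The paper does not recast the PDE at all. It applies Omori--Yau directly to the graph function at a minimizing sequence $\{x_k\}$, where $|\nabla^0 y(x_k,t)| < 1/k$ and $\nabla^0\nabla^0 y(x_k,t) > -\tfrac{1}{k}\delta$. At such a sequence the graphical formulas force $\lim_k H(x_k,t) \ge n$ and $\lim_k w(x_k,t) = 1/y_{\inf}(t)$, and substituting into the raw evolution $\partial_t y = -y^2 w/H$ together with the $dy_{\inf}/dt = \lim_k \partial_t y(x_k,t)$ identity from Lemma \ref{ader} gives $dy_{\inf}/dt \ge -y_{\inf}/n$ in one line; the maximizing sequence (where $\lim_k H \le n$) gives the upper bound symmetrically. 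This is exactly the phenomenon advertised in the Note after the theorem: the reaction term $-yv/H$ is not globally comparable to $-y/n$, but the sign of $H - n$ is forced at the sup/inf sequence by the Hessian control there. Your recasting achieves the same end by shifting the $H - n$ discrepancy into a gradient term, so it converts a ``geometry at the extremum'' argument into a global quasilinear structure; the paper's version is shorter, avoids the Christoffel bookkeeping, and makes the mechanism the paper wants to illustrate transparent.
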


\begin{proof}
Notice that by assumption the function $y(x,t)$ is bounded above and below and hence we can use the result of Lemma \ref{ader} that $y_{inf}(t) = \inf_{\R^n} y(x,t)$ is a well defined, locally Lipschitz function. Then by Theorem \ref{OYMax} there exists $\{x_k\} \in \R^n$ a sequence so that $\displaystyle \lim_{k \rightarrow \infty} y(x_k,t) = \inf_{\R^n} y(x,t)$ then we know by the maximum principle at infinity that
\begin{align}
|\nabla^0 y(x_k,t)| < \frac{1}{k} \hspace{1 cm} \nabla^0\nabla^0 y(x_k,t) > -\frac{1}{k} \delta
\end{align}
and so if we use the expressions for $H$ and $w$ in terms of graphs (See \cite{BA}, section 2) we find
\begin{align}
H = \frac{n+y\tilde{\delta}^{ij} y_{ij}}{\sqrt{1 + |\nabla^0y|^2}}  \hspace{0.2 cm} &\Rightarrow  \hspace{0.2 cm} H(x_k,t) \ge\frac{n-k^{-1}y\tilde{\delta}^{ij} \delta_{ij}}{\sqrt{1 + \frac{1}{k^2}}}   \hspace{0.2 cm} \Rightarrow  \hspace{0.2 cm}\lim_{k \rightarrow \infty} H(x_k,t) \ge n
\\ w = \frac{1}{y\sqrt{1 + |\nabla^0y|^2}}  \hspace{0.25 cm} &\Rightarrow  \hspace{0.25 cm} w(x_k,t) = \frac{1}{y(x_k,t) \sqrt{1 + |\nabla^0 y(x_k,t)|^2}}  \hspace{0.25 cm} 
\\&\Rightarrow  \hspace{0.25 cm} \lim_{k \rightarrow \infty}w(x_k,t) = \frac{1}{y_{inf}(t)}
\end{align}

Now  we can find the following ODE for $y(x,t)$
\begin{align}
\frac{\partial}{\partial t} \left ( \frac{1}{y^2} \right ) = \frac{\partial}{\partial t} \bar{g}(\partial_y,\partial_y) = \frac{2}{H} \bar{g}(\bar{\nabla}_{\bar{\nu}} \partial_y,\partial_y) = \frac{2}{H} \bar{g}( - \frac{\bar{\nu}}{y},\partial_y)= \frac{2}{yH} \bar{g}(\bar{\nu},\eta) = \frac{2w}{yH}
\end{align}
where we used the fact that $\bar{\nabla}_X \partial_y = \frac{-1}{y}X$ for any vector field $X$ (See \cite{BA}, section 2, equation 2.3).

If we let $t$ be a point of differentiablility of the locally Lipschitz function $y_{inf}(t)$ and $\{x_k\}$ a sequence such that $y(x_k,t) \rightarrow y_{inf}(t)$ we find that
\begin{align}
\frac{dy_{inf}(t)}{dt}=\lim_{k \rightarrow \infty} \frac{\partial y}{\partial t}(x_k,t) = -\lim_{k \rightarrow \infty} \frac{y^2w}{H}\ge -\frac{1}{n} y_{inf}(t)
\end{align}
and so by using an integrating factor we find
\begin{align}
y_{inf}(t) \ge y_0 e^{-t/n}
\end{align}

Using a similar argument for $y_{sup}(t) = \sup_{\R^n}y(x,t)$ we find the other important estimate.
\end{proof}

\textbf{Note:} Theorem \eqref{barriers} is a simple example where the evolution of $y$ cannot be controlled everywhere but can be controlled at the sup or inf by exploiting the maximum principle at infinity.

Before we move on to gain higher order bounds on the solution $y$ we state all the evolution equations that we require under IMCF, proofs of which can be found in \cite{BA}, section 2.
\begin{Lem}\label{EvolutionEquations}Let $u =\frac{1}{wH}$ and $M_i^j = H A_i^j$ then we can find the following evolution equations under IMCF:
\begin{align}
\frac{\partial g_{ij}}{\partial t} &=2\frac{A_{ij}}{H}
\\ \frac{\partial \nu}{\partial t} &= \frac{\nabla H}{H^2}
\\(\partial_t - \frac{1}{H^2}\Delta) w &= \frac{|A|^2}{H^2} w 
\\(\partial_t - \frac{1}{H^2}\Delta)w^{-1} &= -\frac{|A|^2}{H^2}w^{-1}- \frac{2}{w^{-1}H^2}|\nabla w^{-1}|^2
\\ (\partial_t - \frac{1}{H^2}\Delta) H &=-2 \frac{|\nabla H|^2}{H^3} -\frac{|A|^2}{H} + \frac{n}{H} 
\\  (\partial_t - \frac{1}{H^2}\Delta) u &=2 \frac{g( \nabla w, \nabla u )}{H^2 w^2} - \frac{nu}{H^2}
\\(\partial_t - \frac{1}{H^2}\Delta) A_{ij} &=-\frac{2}{H^3} \nabla_iH \nabla_j H + \left ( \frac{|A|^2}{H^2} + \frac{n}{H^2}\right )A_{ij} 
\\ (\partial_t - \frac{1}{H^2}\Delta) A_i^j &=-\frac{2}{H^3} \nabla_iH \nabla^j H + \left ( \frac{|A|^2}{H^2} + \frac{n}{H^2}\right )A_i^j - \frac{2}{H}A_{il} A^{jl} 
\\ (\partial_t - \frac{1}{H^2}\Delta) M_i^j &=-\frac{2}{H^3} \nabla_iH \nabla^j H - \frac{2g(\nabla H, \nabla ( M_i^j))}{H^3}  +  \frac{2nM_i^j}{H^2} - \frac{2}{H^2}M_{il} M^{jl} 
\\ (\partial_t - \frac{1}{H^2}\Delta)|A|^2 &= -2\frac{|\nabla A|^2}{H^2} -\frac{4}{H^3} A(\nabla H,\nabla H) +2\frac{|A|^4}{H^2} -4\frac{A^3}{H} +2n\frac{|A|^2}{H^2} 
\end{align}
\end{Lem}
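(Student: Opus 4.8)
The plan is to carry out the standard first- and second-variation computations for a flow of the form $\partial_t\varphi = f\nu$ with speed $f = 1/H$, specialized to the constant-curvature ambient space $\Hy^{n+1}$, where the ambient curvature tensor is $\overline{R}_{abcd} = -(\bar g_{ac}\bar g_{bd} - \bar g_{ad}\bar g_{bc})$ and $\overline{Rc} = -n\,\bar g$. One begins with the elementary variation formulas valid for any normal flow: $\partial_t g_{ij} = 2f h_{ij} = 2A_{ij}/H$, $\partial_t g^{ij} = -2f h^{ij}$, and $\partial_t\nu = -\nabla^\Sigma f = \nabla H/H^2$ (the signs being those dictated by the downward-pointing normal fixed earlier). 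The first two displayed identities are then immediate. For the mean curvature one uses $\partial_t H = -\Delta f - f\,(|A|^2 + \overline{Rc}(\nu,\nu))$; since $\overline{Rc}(\nu,\nu) = -n$ and $\Delta(1/H) = -H^{-2}\Delta H + 2H^{-3}|\nabla H|^2$, this rearranges exactly to $(\partial_t - H^{-2}\Delta)H = -2|\nabla H|^2/H^3 - |A|^2/H + n/H$.

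Next I would treat the second fundamental form. The general evolution equation of a normal flow, specialized to $\Hy^{n+1}$, reads schematically $\partial_t h_{ij} = -\nabla_i\nabla_j f + f\,h_{ik}h^k_j - f\,\overline{R}_{\nu i\nu j}$ with $\overline{R}_{\nu i\nu j} = -g_{ij}$, and this ambient term is precisely the origin of every $n/H$, $n/H^2$, and $2nM_i^j/H^2$ appearing in the Lemma. Combining this with the Simons-type identity valid in a space form, $\Delta h_{ij} = \nabla_i\nabla_j H + (\text{terms quadratic and cubic in } A, \text{ plus explicit ambient-curvature terms})$, and with the evolution of $H$, yields the stated equations for $A_{ij}$, and — after raising an index against the moving metric, which produces the extra $-2A_{il}A^{jl}/H$ — for $A_i^j$. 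The equation for $M_i^j = HA_i^j$ then follows from the product rule applied to the $H$- and $A_i^j$-equations, and the equation for $|A|^2$ by contracting the $A_i^j$-equation with $A^i_j$ while carefully tracking $\partial_t g^{ij}$; the term $-2|\nabla A|^2/H^2$ is the usual Bochner/Simons contribution.

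The remaining, and I expect most delicate, identities are those for $w = \bar g(\nu,\eta)$, for $w^{-1}$, and for $u = 1/(wH)$, since these invoke the special structure of the upper half-space model through $\bar\nabla_X\partial_y = -y^{-1}X$, i.e. that $-\partial_y$ is a concircular field. One computes $\nabla_i w = h_i^k\,\bar g(e_k,\eta)$ from $\bar\nabla_{e_i}\nu = h_i^k e_k$ together with the concircular identity (the $\nu$-component of $\bar\nabla_{e_i}\eta$ drops out since $\bar g(\nu,e_i)=0$), differentiates once more and invokes the Codazzi equation to express $\Delta w$ through $\nabla H$, $|A|^2 w$, and the tangential part of $\eta$; on the time side, the term $\partial_t\eta = \bar\nabla_{\partial_t\varphi}(-\partial_y) = (Hy)^{-1}\nu$ contributes a piece that cancels against the surviving Hessian terms, leaving $(\partial_t - H^{-2}\Delta)w = |A|^2 w/H^2$. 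The identity for $w^{-1}$ is then purely algebraic, the chain rule generating the extra gradient term $-2|\nabla w^{-1}|^2/(w^{-1}H^2)$, and the identity for $u = w^{-1}/H$ follows by combining the $w^{-1}$- and $H$-equations via the product rule, the cross term assembling into $2g(\nabla w,\nabla u)/(H^2 w^2)$. The main obstacle throughout is bookkeeping rather than anything conceptual: keeping all signs consistent with the downward normal, and correctly extracting the ambient-curvature contributions of $\Hy^{n+1}$ in the $h_{ij}$-evolution and in the Simons identity, which is where every factor of $n$ in the Lemma originates. The full computations are carried out in \cite{BA}, section 2, following the space-form adaptation of the standard references such as \cite{CG2}.
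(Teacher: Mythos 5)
The paper gives no proof of this lemma; it simply defers the calculations to \cite{BA}, section 2. Your outline is the standard derivation --- first-variation formulas for the normal flow $\partial_t\varphi=\nu/H$, the constant-curvature ambient curvature tensor of $\Hy^{n+1}$ as the source of every explicit $n$, Simons' identity, product- and chain-rule identities for the operator $\partial_t-H^{-2}\Delta$, and the concircularity $\bar{\nabla}_X\partial_y=-X/y$ of the half-space model for the $w$-identity --- and is exactly what the cited reference carries out. One bookkeeping note: running the product-rule reduction you propose for $u=(wH)^{-1}$, i.e.\ combining the $w^{-1}$- and $H^{-1}$-equations and then rewriting $-\tfrac{2w}{H^3}|\nabla w^{-1}|^2 - \tfrac{2}{H^2}\langle\nabla w^{-1},\nabla H^{-1}\rangle$ in terms of $\nabla u = -u(\nabla w/w + \nabla H/H)$, produces the gradient term $2\,g(\nabla w,\nabla u)/(H^2 w)$ rather than $2\,g(\nabla w,\nabla u)/(H^2 w^2)$ as printed; the exponent on $w$ in the Lemma's $u$-equation appears to be a typo, harmless for Theorem \ref{globalHcontrol} since only the zeroth-order coefficient $-n/H^2$ is used there.
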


Now we obtain $C^1$ bounds on $y$ through the support function $w=\bar{g}(\nu,\eta)$ since $w^{-1} = yv$ (we already have a $C^0$ bound from Theorem \eqref{barriers}).

\begin{Thm}\label{globalwcontrol}
If we assume that $\Sigma_0$ is a hypersurface to which the hypotheses of Theorem \eqref{LTE} apply then we find that
\begin{align}
(i)\hspace{.1 cm} w(x,t) &\ge w_{inf}(0) e^{t/n}
\hspace{2.5 cm}   (ii)\hspace{.1 cm} v(x,t) \le \frac{y_{sup}(0)}{y_{inf}(0)} v_{sup}(0)  
\end{align}
\end{Thm}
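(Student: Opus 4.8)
The plan is to apply the ODE maximum principle at infinity (Theorem \ref{ODEmax}) separately to each of the two relevant evolution equations from Lemma \ref{EvolutionEquations}, using the barrier estimate of Theorem \ref{barriers} to handle the one term that cannot be controlled purely by the reaction term. For part $(i)$, I would start from the evolution equation $(\partial_t - \frac{1}{H^2}\Delta) w = \frac{|A|^2}{H^2} w$. Since $|A|^2 \ge \frac{H^2}{n}$ by Cauchy-Schwarz (trace inequality for the symmetric operator $A_i^j$, as $H = \operatorname{tr} A$), the reaction term satisfies $\frac{|A|^2}{H^2} w \ge \frac{1}{n} w$ wherever $w > 0$, and one checks inductively (or via the barrier $w = \frac{1}{vy} > 0$) that $w$ stays positive. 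Applying Theorem \ref{ODEmax} to $-w$, or more directly running the infimum argument exactly as in the proof of Theorem \ref{barriers}: at a point of differentiability of $w_{inf}(t)$ pick an Omori--Yau sequence $\{x_k\}$ realizing the infimum with $|\nabla w|(x_k) \to 0$ and $\nabla\nabla w(x_k) > -\frac{1}{k}\langle\cdot,\cdot\rangle$, so that $\limsup_k \frac{1}{H^2}\Delta w(x_k) \ge 0$ wait — for the infimum one gets $\Delta w(x_k) > -\frac{n}{k}$, giving $\frac{dw_{inf}}{dt} \ge \liminf_k \frac{|A|^2}{H^2}w(x_k) \ge \frac{1}{n} w_{inf}(t)$. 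Integrating yields $w_{inf}(t) \ge w_{inf}(0) e^{t/n}$, hence $w(x,t) \ge w_{inf}(0) e^{t/n}$ for all $x$.

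For part $(ii)$, the natural quantity is $v = \frac{1}{wy}$, but it is cleaner to bound $w^{-1} = vy$ from above. From Lemma \ref{EvolutionEquations}, $(\partial_t - \frac{1}{H^2}\Delta)w^{-1} = -\frac{|A|^2}{H^2}w^{-1} - \frac{2}{w^{-1}H^2}|\nabla w^{-1}|^2 \le 0$ wherever we can drop the (nonpositive) gradient term and use $|A|^2 \ge 0$; applying the ODE maximum principle at infinity (Theorem \ref{ODEmax}) to $w^{-1}$ with $F \equiv 0$ — strictly, comparing with the trivial ODE — gives $(w^{-1})_{sup}(t) \le (w^{-1})_{sup}(0)$, i.e. $v(x,t)y(x,t) \le v_{sup}(0)\, y_{sup}(0)$. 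Combining with the lower bound $y(x,t) \ge y_{inf}(0)e^{-t/n}$ from Theorem \ref{barriers} is not quite enough by itself — it would give a time-growing bound — so instead I would use the sharper relation: at the point (sequence) realizing $\sup v$, estimate $v \le \frac{v_{sup}(0)y_{sup}(0)}{y}$, and then observe that from the barrier one also needs a matching lower bound on $y$ at those points. The clean route is: $w^{-1} = vy$ is non-increasing in sup, so $v(x,t) \le \frac{v_{sup}(0)y_{sup}(0)}{y(x,t)}$, and separately $y_{inf}(t) \ge y_{inf}(0) e^{-t/n}$; but we want a time-independent bound, so actually the right comparison is that $w$ itself satisfies $w(x,t) \ge w_{inf}(0)e^{t/n}$ from part $(i)$, while $y(x,t) \le y_{sup}(0)e^{-t/n}$ from Theorem \ref{barriers}, whence $v = \frac{1}{wy} \le \frac{1}{w_{inf}(0)e^{t/n} \cdot y(x,t)}$ — and since $y(x,t) \ge y_{inf}(0)e^{-t/n}$ we get $v(x,t) \le \frac{e^{-t/n}}{w_{inf}(0) y_{inf}(0) e^{-t/n}}\cdot\frac{y_{sup}(0)}{y_{sup}(0)}$. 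Using $w_{inf}(0) = \frac{1}{v_{sup}(0)y_{sup}(0)}$ this collapses to $v(x,t) \le \frac{y_{sup}(0)}{y_{inf}(0)}v_{sup}(0)$, as claimed.

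So the two structural inputs are: (a) the algebraic inequality $|A|^2 \ge H^2/n$, which converts the reaction coefficient of the $w$-equation into something $\ge \frac{1}{n}$; and (b) the verification that Theorem \ref{ODEmax} (equivalently Theorem \ref{OYMax}) is applicable to $\Sigma_t$ on the relevant time interval — this was already arranged in the discussion preceding Theorem \ref{barriers}, where $C^2$ bounds from short-time existence give the required radial curvature bounds, and one notes the estimates here are exactly what is needed to bootstrap applicability for all time. The gradient term $-\frac{2}{w^{-1}H^2}|\nabla w^{-1}|^2$ in the $w^{-1}$-equation is harmless because it has a favorable sign and, at an Omori--Yau maximizing sequence, $|\nabla w^{-1}| \to 0$ anyway.

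The main obstacle I anticipate is \emph{not} the maximum-principle machinery, which is by now routine given Theorem \ref{ODEmax}, but rather the bookkeeping needed to produce a genuinely \emph{time-independent} bound in $(ii)$: the monotonicity of $\sup w^{-1}$ alone degrades when combined naively with the barriers, so one must cross-use part $(i)$'s lower bound on $w$ together with \emph{both} the upper and lower horosphere barriers on $y$ from Theorem \ref{barriers}, and see that the exponential factors $e^{\pm t/n}$ cancel exactly. Making that cancellation rigorous — in particular tracking the estimates along the Omori--Yau sequences rather than at honest maximizers — is the step that requires care, though conceptually it is just algebra with the identity $w = \frac{1}{vy}$.
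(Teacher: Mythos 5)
Your proposal is correct and uses essentially the same ingredients as the paper: the trace inequality $|A|^2 \ge H^2/n$, the ODE maximum principle at infinity applied to the evolution of $w$ (equivalently $w^{-1}$), and the horosphere barriers on $y$ from Theorem \ref{barriers}. The paper is slightly more economical—one application of the maximum principle to $w^{-1}$ yields $w^{-1}_{sup}(t)\le w^{-1}_{sup}(0)e^{-t/n}$, from which both $(i)$ and $(ii)$ follow immediately after dividing by the barrier bound $y(x,t)\ge y_{inf}(0)e^{-t/n}$—whereas your treatment of $(ii)$ first drops the $|A|^2\ge H^2/n$ gain, correctly diagnoses the resulting time-growing obstruction, and then recovers by cross-referencing part $(i)$, arriving at the same cancellation of the $e^{\pm t/n}$ factors.
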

\begin{proof}
From the evolution equation for $w^{-1}$ we find
\begin{align}
(\partial_t - \frac{1}{H^2}\Delta)w^{-1} \le -\frac{1}{n}w^{-1}
\end{align}
where we have used that $|A|^2 \ge H^2/n$. Now we can deduce the following differential inequality (at points of differentiability of $w_{sup}$ using Theorem \eqref{ODEmax})
\begin{align}
\frac{d w^{-1}_{sup}}{dt} \le -\frac{1}{n}w^{-1}_{sup}
\end{align}
from which the first estimate follows. Then if we notice that $w^{-1} = vy$ we can find the second estimate by combining with the estimate for $y$ given in Theorem \eqref{barriers}.
\end{proof}

Now we get the required bounds on $H$ which shows that the operator defining IMCF remains uniformly parabolic for $T < \infty$.

\begin{Thm}\label{globalHcontrol} 
If we assume that $\Sigma_0$ is a hypersurface to which the hypotheses of Theorem \eqref{LTE} apply then we find
\begin{align}
c_0 \sqrt{ n^2+ C_0 e^{-2t/n} }  \le H(x,t) \le \sqrt{C_0e^{-2t/n} + n^2}
\end{align}
where $C_0 = H_{sup}(0)^2 - n^2$ if $H_{sup}(0) > n$ and $c_0 =  \frac{y_{inf}(0) H_{inf}(0) }{y_{sup}(0) v_{sup}(0)H_{sup}(0)}$ or
\begin{align}
c_0  \le H(x,t) \le n
\end{align}
where $H_{sup}(0)\le n$ and $c_0 =  \frac{y_{inf}(0) H_{inf}(0) }{y_{sup}(0) v_{sup}(0)}$. 
\end{Thm}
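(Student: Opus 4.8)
We run ODE comparison on two scalar quantities in turn: first on $H$ itself to get the upper bound, then on $u=\frac1{wH}$ to get the lower bound. Throughout we work on $[0,T-\epsilon]$, where the bounds of Theorem \ref{STE} keep the hypotheses of Theorem \ref{OYMax} in force and the ODE maximum principle at infinity (Theorem \ref{ODEmax}) therefore applies; all constants produced will be uniform in $\epsilon$. For the upper bound: in the evolution equation $(\partial_t-\frac1{H^2}\Delta)H=-2\frac{|\nabla H|^2}{H^3}-\frac{|A|^2}{H}+\frac nH$ from Lemma \ref{EvolutionEquations} we discard the non-positive gradient term and use $|A|^2\ge H^2/n$ to get $(\partial_t-\frac1{H^2}\Delta)H\le -\frac Hn+\frac nH=:F(H)$, with $F$ locally Lipschitz on $(0,\infty)$ and $H$ bounded away from $0$. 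Theorem \ref{ODEmax} then compares $H$ with the solution $\varphi$ of $\varphi'=F(\varphi)$, $\varphi(0)=H_{sup}(0)$; the substitution $\psi=\varphi^2$ linearizes this to $\psi'=-\frac2n\psi+2n$, so $\varphi(t)^2=n^2+(H_{sup}(0)^2-n^2)e^{-2t/n}$. This yields $H\le\sqrt{C_0e^{-2t/n}+n^2}$ with $C_0=H_{sup}(0)^2-n^2$ when $H_{sup}(0)>n$, and $H\le n$ when $H_{sup}(0)\le n$.

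For the lower bound we apply Lemma \ref{ader} together with Theorem \ref{OYMax} to $u=\frac1{wH}$, whose evolution $(\partial_t-\frac1{H^2}\Delta)u=2\frac{g(\nabla w,\nabla u)}{H^2w^2}-\frac{nu}{H^2}$ has a gradient term that vanishes along the Omori--Yau sequence realizing $u_{sup}(t)$; feeding in the upper bound on $H^2$ just obtained gives the differential inequality $\frac{d}{dt}u_{sup}\le-\frac{n}{n^2+C_0e^{-2t/n}}\,u_{sup}$ for a.e.\ $t$. Integrating $\frac{d}{dt}\ln u_{sup}$ — the integral $\int_0^t\frac{n\,ds}{n^2+C_0e^{-2s/n}}$ is elementary after the substitution $\sigma=e^{-2s/n}$ and a partial-fraction split, and simplifies via $n^2+C_0=H_{sup}(0)^2$ — produces $u_{sup}(t)\le\frac{u_{sup}(0)H_{sup}(0)e^{-t/n}}{\sqrt{n^2+C_0e^{-2t/n}}}$. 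In the case $H_{sup}(0)\le n$ one uses $H^2\le n^2$ instead and gets the cleaner bound $u_{sup}(t)\le u_{sup}(0)e^{-t/n}$.

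It remains to translate this into a bound on $H$. Since $H=\frac1{wu}\ge\frac1{w_{sup}(t)u_{sup}(t)}$, we combine: $w=\frac1{vy}\le\frac{e^{t/n}}{y_{inf}(0)}$ because $v\ge1$ and $y\ge y_{inf}(0)e^{-t/n}$ by Theorem \ref{barriers}, and $u_{sup}(0)=\sup_x\frac{v(x,0)y(x,0)}{H(x,0)}\le\frac{v_{sup}(0)y_{sup}(0)}{H_{inf}(0)}$ from the initial data. Chaining the three estimates gives $H(x,t)\ge\frac{y_{inf}(0)H_{inf}(0)}{y_{sup}(0)v_{sup}(0)H_{sup}(0)}\sqrt{n^2+C_0e^{-2t/n}}$, which is the asserted $c_0$; the case $H_{sup}(0)\le n$ runs identically and yields $H\ge\frac{y_{inf}(0)H_{inf}(0)}{y_{sup}(0)v_{sup}(0)}$.

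The upper bound is a routine Riccati comparison; the care is all in the lower bound. One must first check that $u$ satisfies the hypotheses under which Lemma \ref{ader} and Theorem \ref{OYMax} may be invoked — in particular that the coefficient $\frac{2\nabla w}{H^2w^2}$ of $\nabla u$ is bounded, which on $[0,T-\epsilon]$ follows from short-time existence and the $C^1$ control already in hand. Second, because the comparison is now against a genuinely non-autonomous ODE, rather than quoting Theorem \ref{ODEmax} verbatim one reruns its argument and integrates the resulting scalar inequality directly; extracting the factor $\sqrt{n^2+C_0e^{-2t/n}}$ from that integral and then threading it correctly through the $C^0$ estimate of Theorem \ref{barriers} and the $C^1$ estimate of Theorem \ref{globalwcontrol} is the one spot where a slip is easy.
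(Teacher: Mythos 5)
Your proof is correct and follows essentially the same route as the paper: the upper bound via the evolution of $H$, the inequality $|A|^2 \ge H^2/n$, and the Riccati comparison; the lower bound via the auxiliary quantity $u=\tfrac{1}{wH}$, the ODE maximum principle at infinity, and the chain of inequalities through $w^{-1}=vy$, Theorem \ref{barriers}, and the initial data. The only meaningful distinction is one of exposition: you are more explicit than the paper about the integration $\int_0^t \frac{n\,ds}{n^2+C_0 e^{-2s/n}}$ and about the fact that the $u$-comparison is against a genuinely non-autonomous ODE (the coefficient $H_{sup}(t)^{-2}$ depends on $t$), so Theorem \ref{ODEmax} is not applied verbatim but re-derived; the paper does the same thing implicitly by integrating the scalar differential inequality for $u_{sup}$ directly. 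Your observation that $u$ must itself satisfy the hypotheses of Lemma \ref{ader} and Theorem \ref{OYMax} — in particular the boundedness of the drift coefficient $2\nabla w/(H^2 w^2)$, controlled via $|\nabla w|\lesssim|A|$ and the lower bounds on $H$ and $w$ on $[0,T-\epsilon]$ — is a correct and worthwhile bookkeeping step that the paper elides.
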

\begin{proof}
We have the evolution equation for $H$ from Lemma \ref{EvolutionEquations}
\begin{align}
\\ (\partial_t - \frac{1}{H^2}\Delta) H &=-2 \frac{|\nabla H|^2}{H^3} -\frac{|A|^2}{H} + \frac{n}{H} 
\end{align}
and by short time existence we know that $H$ is bounded above for at least a short time $t$ and so by using the ODE maximum principle at infinity \eqref{ODEmax} we obtain the differential inequality at points of differentiability of $H_{sup}(t)$
\begin{align}
\frac{d H_{sup}}{dt} \le \frac{1}{nH_{sup}} \left (n^2 - H_{sup}^2 \right )
\end{align}
from which it follows by integration that $H_{sup}(t) \le \sqrt{C_0 e^{-2t/n} + n^2}$ where $C_0 = H_{sup}(0)^2 - n^2$ if $H_{sup}(0) > n$ and $C_0 = 0$  if $H_{sup}(0) \le n$.

Now to obtain the lower bound on $H$ we consider the evolution equation for $u=\frac{1}{wH}$ given in \cite{BA,HI3} and by using the ODE maximum principle at infinity we obtain the following differential inequality at points of differentiability of $u_{sup}$ 
\begin{align} 
\frac{d u_{sup}}{dt} = -\frac{nu_{sup}}{H^2} \le -\frac{n}{n^2 + C_0 e^{-2t/n}}u_{sup}
\end{align}
which implies, by integrating, that $u(x,t) \le \frac{H_{sup}(0) u_{sup}(0)}{\sqrt{n^2 e^{2t/n} + C_0}}$ when $H_{sup}(0) > n$ and then by using the definition of $u = \frac{1}{Hw}$ and applying \eqref{barriers} we find 
\begin{align}
H &\ge \frac{w^{-1}\sqrt{n^2 e^{2t/n} + C_0}}{H_{sup}(0)u_{sup}(0)} = \frac{yv \sqrt{n^2 e^{2t/n} + C_0}}{H_{sup}(0)  u_{sup}(0)} 
\\&\ge \frac{y_{inf}(0) e^{-t/n}H_{inf}(0) w_{inf}(0) \sqrt{n^2 e^{2t/n} + C_0}}{H_{sup}(0) } 
\\&= \frac{y_{inf}(0) H_{inf}(0) }{y_{sup}(0) v_{sup}(0)H_{sup}(0)} \sqrt{n^2  + C_0e^{-2t/n}}
\end{align}
which completes the lower estimate of $H$ when $H_{sup}(0) > n$ . 

When $H_{sup}(0) \le n$ we get the simpler differential inequality at points of differentiability of $u_{sup}$ 
\begin{align} 
\frac{d u_{sup}}{dt} = -\frac{nu_{sup}}{H^2} \le -\frac{u_{sup}}{n}
\end{align}
which implies, by integrating, that $u(x,t) \le u_{sup}(0) e^{-t/n}$ and then  by using the definition of $u = \frac{1}{Hw}$ and applying \eqref{barriers} we find 
\begin{align}
H &\ge w^{-1} u_{sup}(0) e^{-t/n} = yvu_{sup}(0)^{-1} e^{t/n} \ge \frac{y_{inf}(0) H_{inf}(0)}{y_{sup}(0) v_{sup}(0)}
\end{align}
which completes the lower estimate of $H$ when $H_{sup}(0) \le n$.
\end{proof}

\subsection{$C^2$ Estimates and Long Time Existence}\label{subsec:C2LTE}

To obtain an upper bound on $|A|$, the last estimate that we will show, we note that we cannot directly apply the ODE maximum principle at infinity to the maximum eigenvalue of $A$, $\displaystyle \lambda_{max}(x,t)=\max_{v \in T_x\Sigma_t, |v|=1} A(v,v)$ since this function is only locally Lipschitz and hence a laplacian does not exist, even almost everywhere. Since the proof of the ODE maximum principle at infinity relies on a comparison principle for the laplacian we will need to use another method. 

We start by introducing some notation and proving a Proposition which will allow us to construct cutoff functions for IMCF. These cutoff functions will be the key to controling the second fundamental form on a noncompact hypersurface and will allow us to gain $C^2$ control.

We consider the Riemannian manifold $N^{n+1}$ parameterized over $\R_a^b := \{(x_1,...,x_n,y) \in \R^{n+1}: a < y < b\}$ where $a,b \in [-\infty,\infty]$ with  the metric $\bar{g} =\lambda(y)^2 \delta$ which is defined where $\lambda: (a,b) \rightarrow \R$ is defined. We will consider a $n$ dimensional, non-compact hypersurface $\Sigma_0 \subset \R_a^b$.

In line with our previous notation conventions, we will use bars to denote geometric quantities w.r.t $N^{n+1}$, superscript $0$ to denote quantities w.r.t. $\delta$ and no bar or subscript to denote quantities w.r.t. $\Sigma_0$, endowed with the metric induced from $\bar{g}$.

By using well known formulas for conformal metrics, derived from Levi-Civita's formula for the connection, we can find the following expression
\begin{align}\label{exteq}
\bar{\nabla}_X Y = \nabla^0 _XY +\frac{\lambda'}{\lambda}\left (  \langle X,\partial_y \rangle_0 Y + \langle \partial_y,Y \rangle _0 X -  \langle X,Y \rangle_0 \partial_y\right ).
\end{align}

Using this, and the convention that we will put a bar over a vector field $\bar{Z} =\lambda^{-1}(y) Z$ so that $\bar{Z}$ is a unit vector w.r.t. $\bar{g}$, we can obtain the following,
\begin{align}
\bar{div} X &= \bar{g}(\bar{\nabla}_{\bar{e}_i} X, \bar{e}_i)
\\&= \langle \nabla^0 _{e_i} X +\frac{\lambda'}{\lambda} ( \langle e_i,\partial_y \rangle_0 X  + \langle \partial_y, X \rangle_0 e_i -\langle e_i,X\rangle _0 \partial_y, e_i\rangle_0
\\&= div^0 X + (n+1) \frac{\lambda'}{\lambda} \langle X, \partial_y \rangle_0,
\end{align}
where $\{e_1,...,e_{n+1}\}$ is a orthonormal basis for $\R^{n+1}$ w.r.t the flat metric.

We now state and prove a proposition which allows us to define cutoff functions as functions on $\R^{n+1}$ and then compute their evolution equation under IMCF which will be used to define cutoff functions for the flow.

\begin{Prop} \label{EDE}
Let $f: U \rightarrow \R$ where $U \subset \R^n \times \R_+$ open, then the function $g: \Sigma \times [0,T) \rightarrow \R$ defined by $g(p,t) = f(\varphi(p,t))$ has the following evolution equation under IMCF
\begin{align}
&(\partial_t - \frac{1}{H^2}\Delta^{\Sigma_t}) g = \frac{2}{\lambda H}  \nabla^0_{\nu} f+
\\&\frac{1}{\lambda^2 H^2} \left ( \langle \nabla^0_{\nu}\nabla^0 f, \nu \rangle_0-\Delta^0 f -(n-2)\frac{\lambda'}{\lambda} \langle \nabla^0f, \partial_y\rangle_0 -2\frac{\lambda'}{\lambda} \langle \nabla^0 f, \nu \rangle_0 \langle \nu, \partial _y \rangle_0 \right )
\end{align}
\end{Prop}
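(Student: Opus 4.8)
The plan is to compute the parabolic operator $(\partial_t - \tfrac{1}{H^2}\Delta^{\Sigma_t})$ applied to $g(p,t) = f(\varphi(p,t))$ directly, treating the time derivative and the Laplacian terms separately and then combining. First I would handle the time derivative: by the chain rule $\partial_t g = \langle \nabla^0 f, \partial_t \varphi \rangle_0$, and since $\varphi$ solves IMCF, $\partial_t \varphi = \nu/H$ (where $\nu$ is the $\bar g$-unit normal, so $\nu = \lambda^{-1}\nu^0$ in terms of a $\delta$-unit normal $\nu^0$, i.e. $\bar\nu = \lambda^{-1}\nu^0$ in the bar-notation convention). This gives a term of the shape $\tfrac{1}{\lambda H}\nabla^0_{\nu^0} f$ up to the conformal factor bookkeeping; carefully matching the normalization convention ($\bar Z = \lambda^{-1}Z$) is what produces the $\tfrac{2}{\lambda H}\nabla^0_\nu f$ appearing in the statement after it is combined with a piece of the Laplacian term.

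Next I would compute $\Delta^{\Sigma_t} g$. Writing $\{e_i\}$ for a $\bar g$-orthonormal tangent frame on $\Sigma_t$, $\Delta^{\Sigma_t} g = \sum_i \big( e_i(e_i(g)) - (\bar\nabla^{\Sigma_t}_{e_i} e_i)(g)\big)$, and since $g = f\circ\varphi$ this is $\sum_i \big(\langle \bar\nabla^{N}_{e_i}\nabla^{N} f, e_i\rangle + \langle \nabla^N f, \bar\nabla^N_{e_i} e_i - \bar\nabla^{\Sigma_t}_{e_i}e_i\rangle\big)$, where the last bracket is precisely the second fundamental form term $-\langle \nabla^N f,\nu\rangle H$ (this contributes a $-\tfrac{1}{\lambda^2 H^2}\cdot(\text{something})\cdot H$ which, divided by $H^2$, is lower order and will in fact cancel against part of the $\partial_t$ contribution — this is the standard mechanism behind the clean form of the answer). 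The ambient Hessian of $f$ with respect to $\bar g$ must then be converted to the flat Hessian using the conformal connection formula \eqref{exteq}: $\bar\nabla_X\nabla^N f = \nabla^0_X \nabla^0 f + \tfrac{\lambda'}{\lambda}(\ldots)$, and the trace of $\bar\nabla^N\nabla^N f$ over the full ambient frame gives $\bar\Delta f = \lambda^{-2}\big(\Delta^0 f + (n-1)\tfrac{\lambda'}{\lambda}\langle\nabla^0 f,\partial_y\rangle_0\big)$ by the divergence identity $\bar{\operatorname{div}} X = \operatorname{div}^0 X + (n+1)\tfrac{\lambda'}{\lambda}\langle X,\partial_y\rangle_0$ derived just above the statement; subtracting off the normal-normal Hessian $\langle\bar\nabla^N_\nu\nabla^N f,\nu\rangle$ and expanding it via \eqref{exteq} produces the $-\Delta^0 f$, the $(n-2)\tfrac{\lambda'}{\lambda}\langle\nabla^0 f,\partial_y\rangle_0$ and the $2\tfrac{\lambda'}{\lambda}\langle\nabla^0 f,\nu\rangle_0\langle\nu,\partial_y\rangle_0$ cross-terms in the stated formula (the $n-2$ rather than $n-1$ comes from one $\tfrac{\lambda'}{\lambda}$ term in the normal-normal piece). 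Finally I would collect all pieces, divide the Laplacian block by $H^2$, verify that the first-order-in-$f$ pieces coming from $\partial_t \varphi$ and from the conformal correction to $\bar\nabla^{\Sigma_t}_{e_i}e_i$ combine to the single term $\tfrac{2}{\lambda H}\nabla^0_\nu f$, and read off the claimed identity.

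The main obstacle is purely bookkeeping: keeping the three metrics ($\bar g = \lambda^2\delta$ on $N$, $\delta$ on $\R^{n+1}$, and the induced metric on $\Sigma_t$) and their connections straight, and in particular correctly handling the normalization convention $\bar Z = \lambda^{-1}Z$ so that the conformal factor $\lambda$ appears with the right power in each term (one factor of $\lambda$ in the $\partial_t$ term, two in the Laplacian term). There is no deep difficulty — everything follows from \eqref{exteq}, the divergence identity, and the IMCF equation $\partial_t\varphi = \nu/H$ — but the cancellation between the mean-curvature term in $\Delta^{\Sigma_t} g$ and a term arising from $\partial_t g$ is the one place where a sign or factor error would be easy to make, so I would double-check that step against the known evolution equations in Lemma \ref{EvolutionEquations} (e.g. consistency with the equation for $w = \bar g(\nu,\eta)$, which is the special case $f = $ a function of $y$ alone).
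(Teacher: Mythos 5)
Your outline matches the paper's proof almost exactly: write $\partial_t g = \frac{1}{H}\bar\nabla_{\bar\nu}f$, split $\Delta^{\Sigma_t}g$ into an ambient piece minus the normal-normal Hessian minus a mean-curvature piece, convert $\bar{g}$-quantities to $\delta$-quantities via \eqref{exteq} and the divergence identity $\bar{\operatorname{div}}X = \operatorname{div}^0 X + (n+1)\frac{\lambda'}{\lambda}\langle X,\partial_y\rangle_0$, and collect. Your intermediate computation $\bar\Delta f = \lambda^{-2}\bigl(\Delta^0 f + (n-1)\frac{\lambda'}{\lambda}\langle\nabla^0 f,\partial_y\rangle_0\bigr)$, followed by the subtraction of the expanded normal-normal piece giving the extra $-\frac{\lambda'}{\lambda}\langle\nabla^0 f,\partial_y\rangle_0$ (hence $n-2$) and the cross term $-2\frac{\lambda'}{\lambda}\langle\nabla^0 f,\nu\rangle_0\langle\nu,\partial_y\rangle_0$, is precisely the bookkeeping the paper carries out.

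One sign claim in your proposal is wrong, and it is exactly the point the paper flags in its note inside the proof. You write that the $-H\,\bar\nabla_{\bar\nu}f$ piece of $\Delta^{\Sigma_t}g$ ``will in fact cancel against part of the $\partial_t$ contribution --- this is the standard mechanism behind the clean form of the answer.'' That is the \emph{mean curvature flow} mechanism. Here $\partial_t g = +\frac{1}{H}\bar\nabla_{\bar\nu}f$, and $-\frac{1}{H^2}\Delta^{\Sigma_t}g$ contributes $-\frac{1}{H^2}\cdot(-H\bar\nabla_{\bar\nu}f) = +\frac{1}{H}\bar\nabla_{\bar\nu}f$ as well; the two pieces \emph{add} rather than cancel, which is precisely why the final formula carries the nonzero coefficient $\frac{2}{\lambda H}$ in front of $\nabla^0_\nu f$ instead of the coefficient $0$ you would get under MCF. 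You do state the correct outcome at the end of the proposal (``combine to the single term $\frac{2}{\lambda H}\nabla^0_\nu f$''), so this is likely a slip rather than a conceptual error, but it is the one place in the computation where a sign error would silently propagate, and the paper singles it out as the key structural difference between IMCF and MCF. When you write this up, carry that step explicitly rather than by analogy with MCF.
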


\begin{proof} 
For any function $u$ and vector field $X$ we have that
\begin{align}
\bar{div}(uX) &= u \text{ }\bar{div}(X) + X(u)
\\ \bar{\nabla}u & = \lambda^{-2}\nabla^0u
\end{align}

Then we notice that 
\begin{align}
\partial_t g &= \frac{1}{H}\bar{\nabla}_{\bar{\nu}} f
\\ \Delta^{\Sigma_t} g &= div(\nabla g) = div(\bar{\nabla}f - \bar{\nabla}_{\bar{\nu}}f \bar{\nu}) = div(\bar{\nabla}f) - H\bar{\nabla}_{\bar{\nu}}f
\end{align}
where $\Delta^{\Sigma_t} = g^{ij} \nabla^{\Sigma_t}\nabla^{\Sigma_t}$, the Laplacian w.r.t. the hypersurface $\Sigma_t$.

\textbf{Note:} Here is where we see a big difference between MCF and IMCF. When studying MCF there is a cancellation between the time derivative term and the first order term in the Laplacian which simplifies computations. In IMCF these two terms combine and hence give an extra term to deal with.

Now we can find the following expression where our goal is to first write all derivatives as extrinsic derivatives in $N^{n+1}$ and then convert all of those derivatives to derivatives on $\R^{n+1}$, using the formulas obtained above. We start by expressing all of the expressions in terms of the covariant derivative of $N^{n+1}$.

\begin{align}
&(\partial_t - \frac{1}{H^2}\Delta^{\Sigma_t}) g =\frac{2}{H}\bar{\nabla}_{\bar{\nu}} f- \frac{1}{H^2}div(\bar{\nabla}f) 
\\&=  \frac{1}{H^2}\left ( -\bar{div}(\bar{\nabla}f) + \langle \bar{\nabla}_{\nu}\bar{\nabla}f, \nu \rangle_0 \right ) + \frac{2}{\lambda H}  \nabla^0_{\nu} f
\end{align}
Now we express all of the terms with respect to the derivative of $\R^{n+1}$.
\begin{align}
&(\partial_t - \frac{1}{H^2}\Delta^{\Sigma_t}) g =
\\&= \frac{1}{H^2}\left ( -  \frac{\bar{div}(\nabla^0 f)}{\lambda^2} - \nabla^0 f(\lambda^{-2}) + \frac{\langle \bar{\nabla}_{\nu}\nabla^0 f, \nu \rangle_0}{\lambda^2} +  \nu(\lambda^{-2}) \langle \nabla^0 f, \nu \rangle_0 \right)
\\&+ \frac{2}{\lambda H}  \nabla^0_{\nu} f
\\&= \frac{1}{H^2}\left ( -\frac{\Delta^0 f}{\lambda^2} - (n+1)\frac{\lambda'}{\lambda^3} \langle \nabla^0 f, \partial_y \rangle  -\langle \nabla^0f,\nabla^0(\lambda^{-2}) \rangle_0 + \langle \nu, \nabla^0(\lambda^{-2})\rangle _0 \langle \nabla^0 f, \nu \rangle_0 \right )
\\&+ \frac{1}{\lambda^2 H^2}\left ( \langle \nabla^0 _{\nu}\nabla^0 f +\frac{\lambda'}{\lambda} \left (  \langle \nu,\partial_y \rangle_0 \nabla^0 f + \langle \partial_y,\nabla^0 f\rangle_0 \nu - \langle \nu,\nabla^0 f \rangle_0 \partial_y \right ) , \nu \rangle_0 \right )\\&+ \frac{2}{\lambda H} \nabla^0_{\nu} f
\\&= \frac{1}{\lambda^2 H^2} \left ( \langle \nabla^0_{\nu}\nabla^0 f, \nu \rangle_0-\Delta^0 f -(n-2)\frac{\lambda'}{\lambda} \langle \nabla^0f, \partial_y\rangle_0 -2\frac{\lambda'}{\lambda} \langle \nabla^0 f, \nu \rangle_0 \langle \nu, \partial _y \rangle_0 \right )\\&+ \frac{2}{\lambda H}  \nabla^0_{\nu} f
\end{align}
\end{proof}

\textbf{Note:} $g$ depends on $t$ through the embedding function $\varphi_t$ but if it also independently depends on $t$ then there will be another term in the evolution equation for $g$ corresponding to the partial derivative w.r.t this aforementioned dependence on $t$.

\textbf{Note:} From now on we will be sloppy and just denote $g$, the function defined on $\Sigma_t$, and $f$, the extrinsically defined function on $N$, as the same function where the composition with the embedding function, $\varphi$, is implied.

Now we make the following definition which we will use throughout the rest of the document.

\begin{Def}\label{impDef} Let $\Sigma_0$ be a hypersurface satisfying the conditions of Theorem \ref{LTE} and let $\Sigma_t$ be the corresponding solution of IMCF which is guaranteed to exist for all time $t \in [0,\infty)$. Then for $T< \infty$ we let 
\begin{align}
\Omega_{R,T} :=B_R(0) \times [0,T)
\end{align}
and then we also define
\begin{align}
\Hc_0 = \displaystyle \inf_{\overline{\Omega}_{\infty,T}} \min (H,H^2) > 0.
\end{align}
If we consider a function $\alpha( x_1,...,x_n,y,t)$ depending on $R,\Hc_0$ then we can also define,
\begin{align}
U_R = \{(x,t) \in \Omega_{R,T} : \alpha(\varphi(x,t),t) > 0 \},
\end{align}
as well as,
\begin{align}
U_{R,\theta,t} = \{(x,t) \in U_R : \alpha(\varphi(x,t),t) > (1-\theta)R^2 \},
\end{align}
where $\theta \in (0,1)$.
\end{Def}

\begin{Lem} \label{CutEq}
If we define $\alpha =\frac{1}{R} \left( R^2 - |x|^2 - \frac{2}{\Hc^R_0} (ny_0^2 + 4y_0R + C_R)t\right)$ for $N = \Hy^{n+1}$, where $y(x,0) \le y_0$, $C_R \ge 0$ is arbitrary. Then $\alpha$ is a subsolution to the IMCF heat operator on $\Sigma_t$, i.e. for $t \in [0,T)$:
\begin{align}
\left (\partial_t - \frac{1}{H^2}\Delta \right )\alpha \le -\frac{2C_R}{R\Hc_0} \le 0
\end{align}
\end{Lem}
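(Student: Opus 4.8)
The plan is to apply Proposition \ref{EDE} to the $y$-independent part of $\alpha$, and to use the observation in the Note after that Proposition that the explicit dependence of $\alpha$ on $t$ adds the extra summand $\partial_t^{\mathrm{expl}}\alpha$ on the left-hand side. Write $\alpha = f + \psi(t)$ with $f(x,y) := \frac1R\big(R^2 - |x|^2\big)$, a function on $\R^{n+1}$ independent of $y$ and of $t$, and $\psi(t) := -\frac{2t}{R\Hc_0}\big(ny_0^2 + 4y_0R + C_R\big)$. For $N = \Hy^{n+1}$ in the upper half space model one has $\lambda(y) = 1/y$, hence $\frac{\lambda'}{\lambda} = -\frac1y$, $\frac{2}{\lambda H} = \frac{2y}{H}$, and $\frac{1}{\lambda^2 H^2} = \frac{y^2}{H^2}$. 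Since $\psi(t)$ has no spatial dependence, Proposition \ref{EDE} applied to it vanishes, so $\big(\partial_t - \frac1{H^2}\Delta\big)\alpha$ equals the Proposition \ref{EDE} expression for $f$ plus $\psi'(t)$.

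First I would record the Euclidean derivatives of $f$: $\nabla^0 f = -\frac2R(x_1,\dots,x_n,0)$, $\Delta^0 f = -\frac{2n}{R}$, and $(\nabla^0\nabla^0 f)_{ij} = -\frac2R\delta_{ij}$ for $i,j\le n$ with all remaining entries zero. The consequences I will use are: $\langle\nabla^0 f,\partial_y\rangle_0 = 0$, so the $(n-2)\frac{\lambda'}{\lambda}$-term in Proposition \ref{EDE} drops out identically; $\langle\nabla^0_\nu\nabla^0 f,\nu\rangle_0 = -\frac2R\big(1-\nu_{n+1}^2\big)\in\big[-\tfrac2R,0\big]$ since $|\nu|_0 = 1$; and $\big|\langle\nabla^0 f,\nu\rangle_0\big| = \frac2R\big|\langle(x_1,\dots,x_n,0),\nu\rangle_0\big|\le\frac2R|x|$, together with $|\langle\nu,\partial_y\rangle_0| = |\nu_{n+1}|\le 1$. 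Substituting into Proposition \ref{EDE} and adding $\psi'(t)$ gives
\begin{align}
\Big(\partial_t - \tfrac1{H^2}\Delta\Big)\alpha
&= -\frac{2\big(ny_0^2 + 4y_0R + C_R\big)}{R\Hc_0}
+ \frac{2y}{H}\,\langle\nabla^0 f,\nu\rangle_0 \nonumber\\
&\quad + \frac{y^2}{H^2}\Big(\langle\nabla^0_\nu\nabla^0 f,\nu\rangle_0 - \Delta^0 f\Big)
+ \frac{2y}{H^2}\,\langle\nabla^0 f,\nu\rangle_0\,\nu_{n+1},
\end{align}
where the last term comes from $-2\frac{\lambda'}{\lambda}\langle\nabla^0 f,\nu\rangle_0\langle\nu,\partial_y\rangle_0$ with $\frac{\lambda'}{\lambda} = -1/y$.

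It then remains to bound the last three (a priori positive) terms, which is where the constants in $\alpha$ get pinned down. On $\Omega_{R,T}$ we have $|x|\le R$; Theorem \ref{barriers}, applied with $\sup_{\R^n}y(x,0)\le y_0$, gives $0 < y\le y_0$; and Theorem \ref{globalHcontrol} together with Definition \ref{impDef} gives $H\ge\Hc_0$ and $H^2\ge\Hc_0$. Hence $\big|\frac{2y}{H}\langle\nabla^0 f,\nu\rangle_0\big|\le\frac{2y_0}{\Hc_0}\cdot\frac{2|x|}{R}\le\frac{4y_0}{\Hc_0}$, the same bound holds for $\big|\frac{2y}{H^2}\langle\nabla^0 f,\nu\rangle_0\,\nu_{n+1}\big|$, and $\frac{y^2}{H^2}\big(\langle\nabla^0_\nu\nabla^0 f,\nu\rangle_0 - \Delta^0 f\big)\le\frac{y^2}{H^2}\cdot\frac{2n}{R}\le\frac{2ny_0^2}{R\Hc_0}$ using $-\frac2R(1-\nu_{n+1}^2)\le 0$. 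Adding these, the $4y_0R$- and $ny_0^2$-parts of $\psi'(t) = -\frac{2}{R\Hc_0}(ny_0^2 + 4y_0R + C_R)$ exactly absorb them, leaving
\begin{align}
\Big(\partial_t - \tfrac1{H^2}\Delta\Big)\alpha \le -\frac{2C_R}{R\Hc_0}\le 0.
\end{align}
The only real obstacle is the bookkeeping inside the somewhat long formula of Proposition \ref{EDE}: keeping the sign $\frac{\lambda'}{\lambda} = -1/y$ straight, and remembering that although $f$ is independent of $y$ the unit normal $\nu$ is not, so the term involving $\langle\nu,\partial_y\rangle_0$ survives and, together with the Hessian/Laplacian term, is exactly what forces $ny_0^2$ and $4y_0R$ into the definition of $\alpha$.
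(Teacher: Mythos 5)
Your proof is correct and follows essentially the same route as the paper: both apply Proposition \ref{EDE} to the spatial part of $\alpha$ (the paper to $|x|^2$, you to $f=\frac1R(R^2-|x|^2)$, which is the same computation up to the factor $-\frac1R$), add the explicit time derivative, and then bound the three resulting positive terms by $\frac{2ny_0^2}{R\Hc_0}$ and twice $\frac{4y_0}{\Hc_0}$ using $y\le y_0$, $|x|\le R$, $|\nu|_0=1$, and $\min(H,H^2)\ge\Hc_0$. The bookkeeping is slightly cleaner in your version since writing $\langle\nabla^0_\nu\nabla^0 f,\nu\rangle_0=-\frac2R(1-\nu_{n+1}^2)\le 0$ lets you drop that term outright rather than bounding $2|\hat\nu|^2-2n\le 0$, but the estimates and constants are identical.
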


\begin{proof}
 If we let $|x|^2 = x_1^2 + ... + x_n^2$  for $N = \Hy^{n+1}$, in the upper half space model, then we find
\begin{align}
(\partial_t - \frac{1}{H^2}\Delta) |x|^2&= \frac{1}{H^2} \left ( y^2 ( 2|\hat{\nu}|^2 - 2n ) +4y \langle x,\nu\rangle _0 \langle \nu,\partial y\rangle_0 \right ) + \frac{4y}{H} \langle x,\nu\rangle_0
\end{align}
where we have used the following relations as well as Proposition \eqref{EDE}
\begin{align}
\nabla^0 |x|^2 &=2x \hspace{0.5cm} \nabla^0_{\nu} \nabla^0 |x|^2 = 2 \hat{\nu} \hspace{0.5cm}  \Delta^0 |x|^2 = 2n
\end{align}
where $\hat{\nu}$ is  the projection of $\nu$ onto $\R^n \times\{0\}$.
\begin{align}
&(\partial_t - \frac{1}{H^2}\Delta)\alpha =\frac{-1}{RH^2} \left ( y^2 ( 2|\hat{\nu}|^2 - 2n ) +4y \langle x,\nu\rangle _0 \langle \nu,\partial y\rangle_0 \right ) - \frac{4y}{RH} \langle x,\nu\rangle_0 
\\&- \frac{2}{R\Hc_0}(ny_0^2 + 4y_0R+C_R)
\\&\le \frac{2ny^2}{RH^2} + \frac{4yR}{RH^2}+ \frac{4yR}{RH}- \frac{2}{R\Hc_0}(ny_0^2 + 4y_0R + C_R) \le -\frac{2C_R}{R\Hc_0}
\end{align}
\end{proof}

\textbf{Note:} We purposefully leave $C_R> 0$ undefined for now because we will choose it later depending on the estimate we are trying to achieve. 

Now we will obtain a local second order estimate through bounding $A_{ij}$. Again we will need to consider $P_i^j = w^{-1} A_i^j$ instead of $A_i^j$ directly because we need to leverage the good evolution equation for $w^{-1}$ in order to kill the bad terms in the evolution of $A_i^j$ and obtain a useful evolution equation for $P_i^j$. We start by obtaining important evolution equations and then obtain the estimate in Lemma \ref{LAEs}.

\begin{Lem} \label{LAEq} If we define $P_i^j = w^{-1}A_i^j$ then we will find the following evolution equation
\begin{align}
(\partial_t - \frac{1}{H^2}\Delta)P_i^j&= -\frac{2}{wH^3} \nabla_iH \nabla^jH -\frac{2w}{H^2} g(\nabla w^{-1},\nabla P_i^j ) 
+\frac{n}{H^2}P_i^j - \frac{2w}{H}(P^2)_i^j
\end{align}

Now if we consider $\eta : \R \rightarrow \R$ and $\alpha$ the cutoff function from Lemma \ref{CutEq} then we find the following evolution equation for $\eta(\alpha) P_i^j$
\begin{align}
&(\partial_t - \frac{1}{H^2}\Delta)(\eta P_i^j)=-\frac{2w}{H^2} g(\nabla w^{-1}, \nabla(\eta P_i^j)) -\frac{2}{\eta H^2} g(\nabla \eta, \nabla (\eta P_i^j))
\\& -\frac{2\eta}{wH^3} \nabla_iH \nabla^jH+\frac{2w\eta'}{H^2}P_i^jg(\nabla w^{-1},\nabla\alpha)+\frac{P_i^j}{H^2}\left(\frac{2\eta'^2}{\eta} - \eta''\right) |\nabla \alpha|^2 
\\&+\frac{n}{H^2}(\eta P_i^j) - \frac{2w}{\eta H}(\eta^2P^2)_i^j-\frac{2C_R\eta' P_i^j}{R\Hc_0}
\end{align}
\end{Lem}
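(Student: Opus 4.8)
The plan is to split the computation into two parts as stated. First I would derive the evolution equation for $P_i^j = w^{-1}A_i^j$. The idea is to use the product rule for the operator $\partial_t - \frac{1}{H^2}\Delta$ together with the two evolution equations from Lemma \ref{EvolutionEquations}: the one for $w^{-1}$, namely $(\partial_t - \frac{1}{H^2}\Delta)w^{-1} = -\frac{|A|^2}{H^2}w^{-1} - \frac{2}{w^{-1}H^2}|\nabla w^{-1}|^2$, and the one for $A_i^j$, namely $(\partial_t - \frac{1}{H^2}\Delta)A_i^j = -\frac{2}{H^3}\nabla_iH\nabla^jH + (\frac{|A|^2}{H^2} + \frac{n}{H^2})A_i^j - \frac{2}{H}A_{il}A^{jl}$. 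Writing $\mathcal{L} = \partial_t - \frac{1}{H^2}\Delta$ and using $\mathcal{L}(fg) = f\mathcal{L}(g) + g\mathcal{L}(f) - \frac{2}{H^2}g(\nabla f,\nabla g)$, the key point is that the term $+\frac{|A|^2}{H^2}A_i^j$ coming from $\mathcal{L}(A_i^j)$ is exactly cancelled (after multiplying by $w^{-1}$) by the term $-\frac{|A|^2}{H^2}w^{-1}$ from $\mathcal{L}(w^{-1})$ multiplied by $A_i^j$. This is precisely the reason $P_i^j$ is the right quantity, as flagged in the text. The remaining bookkeeping: $w^{-1}\cdot(-\frac{2}{H^3}\nabla_iH\nabla^jH) = -\frac{2}{wH^3}\nabla_iH\nabla^jH$; the cross term $-\frac{2}{H^2}g(\nabla w^{-1},\nabla A_i^j)$ should be reorganized using $\nabla P_i^j = w^{-1}\nabla A_i^j + A_i^j\nabla w^{-1}$ into $-\frac{2w}{H^2}g(\nabla w^{-1},\nabla P_i^j)$ plus a term proportional to $|\nabla w^{-1}|^2 A_i^j$ which must cancel against $A_i^j\cdot(-\frac{2}{w^{-1}H^2}|\nabla w^{-1}|^2)$; the $\frac{n}{H^2}A_i^j$ term becomes $\frac{n}{H^2}P_i^j$; and $w^{-1}\cdot(-\frac{2}{H}A_{il}A^{jl}) = -\frac{2w}{H}(P^2)_i^j$ since $(P^2)_i^j = w^{-2}(A^2)_i^j$.

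Second, for the evolution of $\eta(\alpha)P_i^j$, I would again apply the product rule, now with $f = \eta(\alpha)$ and $g = P_i^j$. This requires $\mathcal{L}(\eta(\alpha))$, which by the chain rule is $\eta'(\alpha)\mathcal{L}(\alpha) - \frac{1}{H^2}\eta''(\alpha)|\nabla\alpha|^2$, and here I can substitute the bound/identity from Lemma \ref{CutEq}. Actually, to get an \emph{equation} rather than an inequality I should use the exact expression $\mathcal{L}(\alpha) = -\frac{2C_R}{R\Hc_0} + (\text{the explicitly computed terms})$ — but the cleaner route, matching the stated form, is to treat $\mathcal{L}(\alpha)$ symbolically: the only piece that survives explicitly in the final formula is $-\frac{2C_R}{R\Hc_0}$, contributing $-\frac{2C_R\eta'P_i^j}{R\Hc_0}$. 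Then one collects: the $\eta\mathcal{L}(P_i^j)$ terms give the $-\frac{2\eta}{wH^3}\nabla_iH\nabla^jH$, $\frac{n}{H^2}\eta P_i^j$, and $-\frac{2w}{H}\eta(P^2)_i^j = -\frac{2w}{\eta H}(\eta^2 P^2)_i^j$ terms, plus the gradient term $-\frac{2w\eta}{H^2}g(\nabla w^{-1},\nabla P_i^j)$ which must be rewritten in terms of $\nabla(\eta P_i^j) = \eta\nabla P_i^j + \eta'P_i^j\nabla\alpha$, producing $-\frac{2w}{H^2}g(\nabla w^{-1},\nabla(\eta P_i^j))$ plus the correction $+\frac{2w\eta'}{H^2}P_i^j g(\nabla w^{-1},\nabla\alpha)$. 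Similarly the cross term $-\frac{2}{H^2}g(\nabla\eta,\nabla P_i^j) = -\frac{2\eta'}{H^2}g(\nabla\alpha,\nabla P_i^j)$ should be converted into $-\frac{2}{\eta H^2}g(\nabla\eta,\nabla(\eta P_i^j))$ plus a term with $|\nabla\alpha|^2$; combining that with the $-\frac{1}{H^2}\eta''|\nabla\alpha|^2 P_i^j$ contribution yields the coefficient $\frac{P_i^j}{H^2}(\frac{2\eta'^2}{\eta} - \eta'')|\nabla\alpha|^2$.

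The main obstacle I anticipate is purely organizational: there are several gradient terms ($g(\nabla w^{-1},\nabla A_i^j)$, $g(\nabla\eta,\nabla P_i^j)$, $g(\nabla\eta,\nabla w^{-1})$) that each split under the Leibniz rule, and one has to be careful to rewrite \emph{everything} consistently in terms of $\nabla(\eta P_i^j)$ so that the stated "divergence-form" structure (which is what makes the maximum principle applicable to $\eta(\alpha)P_i^j$ later) actually emerges, with all the stray $|\nabla\alpha|^2 P_i^j$ and $g(\nabla w^{-1},\nabla\alpha)P_i^j$ pieces landing with exactly the coefficients claimed. There is also a subtlety about whether $\alpha$ carries its own explicit $t$-dependence: since $\alpha$ is defined as a function on $\R^{n+1}\times[0,T)$ with explicit $t$ in the formula, the time derivative of $\eta(\alpha\circ\varphi)$ picks up both the Proposition \ref{EDE} contribution and the $\partial_t$ of the explicit $t$; this is already accounted for in Lemma \ref{CutEq}'s computation of $\mathcal{L}(\alpha)$, so I would just cite that. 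No genuinely hard analysis is involved — it is a careful but routine tensor computation, and I would refer to \cite{BA} for the fully expanded version while presenting the cancellation structure here.
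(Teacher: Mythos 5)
Your proposal is correct and follows essentially the same route as the paper: Leibniz rule for $\partial_t - \frac{1}{H^2}\Delta$, cancellation of the $\frac{|A|^2}{H^2}$ terms coming from the $w^{-1}$ and $A_i^j$ evolutions, and reabsorption of the $|\nabla w^{-1}|^2$ and mixed gradient terms into $g(\nabla w^{-1},\nabla P_i^j)$ (resp.\ $g(\nabla w^{-1},\nabla(\eta P_i^j))$ and $g(\nabla\eta,\nabla(\eta P_i^j))$). The only cosmetic difference is that you apply the chain rule to $\eta(\alpha)$ directly, whereas the paper first works out the $\alpha P_i^j$ case (which produces $\alpha$ in a denominator) and then states the $\eta(\alpha)$ generalization without re-deriving it; your awareness that the displayed ``$=$'' for $\eta(\alpha)P_i^j$ is really a ``$\le$'' inherited from Lemma \ref{CutEq} is also the correct reading.
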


\begin{proof}

\begin{align}
&(\partial_t - \frac{1}{H^2}\Delta)P_i^j=w^{-1} (\partial_t - \frac{1}{H^2}\Delta)A_i^j +A_i^j(\partial_t - \frac{1}{H^2}\Delta)w^{-1} -\frac{2}{H^2} g(\nabla w^{-1}, \nabla A_i^j)
\\&=w^{-1} \left ( -\frac{2}{H^3} \nabla_iH \nabla^jH + \frac{|A|^2}{H^2} A_i^j + \frac{n}{H^2} A_i^j -\frac{2}{H} (A^2)_i^j \right )
\\& -\frac{|A|^2}{H^2}w^{-1}A_i^j -\frac{2wA_i^j}{H^2}|\nabla w^{-1}|^2-\frac{2}{H^2} g(\nabla w^{-1}, \nabla A_i^j)
\\&= -\frac{2}{wH^3} \nabla_iH \nabla^jH -\frac{2wA_i^j}{H^2}|\nabla w^{-1}|^2-\frac{2}{H^2} g(\nabla w^{-1}, \nabla A_i^j)
+\frac{n}{H^2}P_i^j - \frac{2w}{H}(P^2)_i^j
\end{align}

Now we will use the fact that
\begin{align}
-\frac{2w}{H^2} g(\nabla w^{-1},\nabla (w^{-1}A_i^j ) ) = -\frac{2wA_i^j}{H^2}|\nabla w^{-1}|^2 -\frac{2}{H^2} g(\nabla w^{-1}, \nabla A_i^j)
\end{align}
to find the following
\begin{align}
(\partial_t - \frac{1}{H^2}\Delta)P_i^j &= -\frac{2}{wH^3} \nabla_iH \nabla^jH -\frac{2w}{H^2} g(\nabla w^{-1},\nabla P_i^j ) 
+\frac{n}{H^2}P_i^j - \frac{2w}{H}(P^2)_i^j
\end{align}

\textbf{Note:} We are not worried about the $\nabla_iH \nabla^jH$ term since at some point in this argument we are going to look at the maximum eigenvalue of $P_i^j$ in which case this term will be negative.

Now if we let $\alpha$ be the cutoff function from \eqref{CutEq} so that $(\partial_t - \frac{1}{H^2}\Delta)\alpha \le -\frac{2C_R}{R\Hc_0}$ then we can compute the following evolution inequality for $\alpha P_i^j$
\begin{align}
(\partial_t - \frac{1}{H^2}\Delta)(\alpha P_i^j)&\le \alpha (\partial_t - \frac{1}{H^2}\Delta)P_i^j +P_i^j(\partial_t - \frac{1}{H^2}\Delta)\alpha -\frac{2}{H^2} g(\nabla \alpha, \nabla P_i^j)
\\&= -\frac{2\alpha}{wH^3} \nabla_iH \nabla^jH -\frac{2w\alpha}{H^2} g(\nabla w^{-1},\nabla P_i^j ) 
-\frac{2}{H^2} g(\nabla \alpha, \nabla P_i^j)
\\&+\frac{n\alpha}{H^2}P_i^j - \frac{2w\alpha}{H}(P^2)_i^j-\frac{2C_RP_i^j}{R\Hc_0}
\end{align}

Now we again compute some gradient terms
\begin{align}
-\frac{2w}{H^2} g(\nabla w^{-1}, \nabla(\alpha P_i^j)) &=-\frac{2\alpha w}{H^2}g(\nabla w^{-1},\nabla P_i^j) -\frac{2w}{H^2}P_i^jg(\nabla w^{-1}\nabla\alpha)
\\ -\frac{2}{\alpha H^2} g(\nabla \alpha, \nabla (\alpha P_i^j)) &= -\frac{2}{H^2} g(\nabla \alpha,\nabla P_i^j) - \frac{2P_i^j}{\alpha H^2} |\nabla \alpha|^2
\end{align}
from which we find
\begin{align}
(\partial_t - \frac{1}{H^2}\Delta)(\alpha P_i^j)&\le -\frac{2w}{H^2} g(\nabla w^{-1}, \nabla(\alpha P_i^j)) -\frac{2}{\alpha H^2} g(\nabla \alpha, \nabla (\alpha P_i^j))
\\& -\frac{2\alpha}{wH^3} \nabla_iH \nabla^jH+\frac{2w}{H^2}P_i^jg(\nabla w^{-1},\nabla\alpha)+\frac{2P_i^j}{\alpha H^2} |\nabla \alpha|^2
\\&+\frac{n}{H^2}(\alpha P_i^j) - \frac{2w}{\alpha H}(\alpha^2 P^2)_i^j-\frac{2C_RP_i^j}{R\Hc_0}
\end{align}

To deal with the $\alpha$ that shows up in the denominator of the $|\nabla \alpha|^2$ term we consider a function $\eta : \R \rightarrow \R$ non-decreasing and compute the following evolution for $\eta(\alpha) P_i^j$
\begin{align}
&(\partial_t - \frac{1}{H^2}\Delta)(\eta P_i^j)=-\frac{2w}{H^2} g(\nabla w^{-1}, \nabla(\eta P_i^j)) -\frac{2}{\eta H^2} g(\nabla \eta, \nabla (\eta P_i^j))
\\& -\frac{2\eta}{wH^3} \nabla_iH \nabla^jH+\frac{2w\eta'}{H^2}P_i^jg(\nabla w^{-1},\nabla\alpha)+\frac{2P_i^j}{H^2}\frac{\eta'^2}{\eta} |\nabla \alpha|^2 - \frac{\eta'' P_i^j}{H^2} |\nabla \alpha |^2
\\&+\frac{n}{H^2}(\eta P_i^j) - \frac{2w}{\eta H}(\eta^2P^2)_i^j-\frac{2C_R\eta' P_i^j}{R\Hc_0^R}
\end{align}

\end{proof}

Now we are ready to prove an estimate for $P_i^j$ which will imply an estimate for $A_i^j$.

\begin{Lem}\label{LAEs}
Define $P_i^j = w^{-1} A_i^j$ and assume that $\Sigma_0$ is a hypersurface to which Theorem \ref{LTE} applies on $B_R$ and let $T$ be the maximal time of existence. Then for $t \in [0,T)$, $\theta \in (0,1)$,
\begin{align}
\max_{U_{R,\theta,t}} P_i^j &\le \max \left ( \max_{U_{R,1,0}} P_i^j, c_0 \right )(1-\theta)^{-2},
\end{align}
where $\displaystyle \max_U P_i^j$ refers to the maximum eigenvalue of $P$ over the set $U$ and $c_0$ is a upper bound on $(Hw)^{-1}$ in $U_R$, guaranteed by previous estimates. 
\end{Lem}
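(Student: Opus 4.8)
The plan is to run a localized parabolic maximum principle for the largest eigenvalue of $P_i^j$, using the cutoff $\alpha$ of Lemma \ref{CutEq} as a space-time weight, in the spirit of the Ecker--Huisken interior curvature estimates but driven by the evolution equation for $\eta(\alpha)P_i^j$ established in Lemma \ref{LAEq}.

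\textbf{Set-up.} I would write $\lambda_{\max}(x,t)$ for the top eigenvalue of $P_i^j(x,t)$ and fix a non-decreasing $\eta\in C^2(\R)$ with $\eta\equiv 0$ on $(-\infty,0]$ and $\eta(s)=s^2$ for $s>0$. Consider $F:=\eta(\alpha)\,\lambda_{\max}$ on $\overline{U_R}\cap\{t\le\tau\}$ for an arbitrary $\tau<T$. By Definition \ref{impDef}, $U_R$ is exactly the set where $\alpha\circ\varphi>0$, so $\eta(\alpha)$ vanishes on the parabolic boundary of $U_R$ away from the initial slice; hence $F$ attains its maximum either on $U_{R,1,0}$ (the slice $t=0$), in which case the estimate is already proved, or at an interior space-time point $(x_0,t_0)$ with $t_0\in(0,\tau]$ and $\alpha_0:=\alpha(\varphi(x_0,t_0),t_0)>0$. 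We may assume the maximum is positive, else the stated bound is trivial.

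\textbf{Interior maximum.} Since $\lambda_{\max}$ is only Lipschitz, I would invoke Hamilton's trick for the largest eigenvalue of a symmetric $2$-tensor: choose a unit $v_0\in T_{x_0}\Sigma_{t_0}$ with $P(v_0,v_0)=\lambda_{\max}(x_0,t_0)$, extend it to a space-time vector field $v$ parallel at $(x_0,t_0)$ with $|v|\le 1$, so that $\tilde F:=\eta(\alpha)P(v,v)$ satisfies $\tilde F\le F$ with equality at $(x_0,t_0)$; then $(x_0,t_0)$ is an interior maximum of $\tilde F$ as well, where $\partial_t\tilde F\ge 0$, $\nabla\tilde F=0$, $\Delta\tilde F\le 0$. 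Contracting the evolution equation of Lemma \ref{LAEq} with $v_0$ and using these relations: the two drift terms $-\tfrac{2w}{H^2}g(\nabla w^{-1},\nabla\tilde F)-\tfrac{2}{\eta H^2}g(\nabla\eta,\nabla\tilde F)$ vanish; the Codazzi term becomes $-\tfrac{2\eta}{wH^3}|\nabla_{v_0}H|^2\le 0$; and the reaction part is $\eta\lambda_{\max}\big(\tfrac{n}{H^2}-\tfrac{2w}{H}\lambda_{\max}\big)$, which is strictly negative once $\lambda_{\max}>\tfrac{n}{2}(Hw)^{-1}$, hence once $\lambda_{\max}>\tfrac{n}{2}c_0$. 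So if $\lambda_{\max}(x_0,t_0)\le\tfrac{n}{2}c_0$ we are done (this is the $c_0$ alternative); otherwise I must absorb the two remaining terms, the cross term $\tfrac{2w\eta'}{H^2}P(v_0,v_0)\,g(\nabla w^{-1},\nabla\alpha)$ and the weight term $\tfrac{P(v_0,v_0)}{H^2}\big(\tfrac{2\eta'^2}{\eta}-\eta''\big)|\nabla\alpha|^2$. For this I would use that $|\nabla\alpha|$ is bounded on $B_R$ (from $\nabla^0|x|^2=2x$ and the $C^0$ bound on $y$ of Theorem \ref{barriers}), the uniform control on $H$, $w$, $w^{-1}$ from Theorems \ref{barriers}, \ref{globalwcontrol}, \ref{globalHcontrol}, the identity $\nabla(P(v_0,v_0))=-\tfrac{\eta'}{\eta}P(v_0,v_0)\nabla\alpha$ forced by $\nabla\tilde F=0$, and Young's inequality against the strictly negative quadratic term $-\tfrac{2w}{H}\eta\lambda_{\max}^2$. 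This should produce at $(x_0,t_0)$ an inequality of the shape $0\le -c\,\eta(\alpha_0)\lambda_{\max}^2+C(1+\lambda_{\max})$, with $c,C$ depending only on $R$, $\Hc_0$ and the earlier a priori bounds, bounding $F(x_0,t_0)=\eta(\alpha_0)\lambda_{\max}$ from above. Combining with the $t=0$ alternative, dividing by the lower bound for $\eta(\alpha)$ valid on $U_{R,\theta,t}$, and using that $\eta$ is quadratic so that the ratio of its largest value on $U_{R,1,0}$ to its least value on $U_{R,\theta,t}$ is $(1-\theta)^{-2}$, yields the stated factor.

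\textbf{Main obstacle.} The delicate point is the cross term in $\nabla w^{-1}$: unlike for graphs over a plane in $\R^{n+1}$, $\nabla w^{-1}$ is not controlled independently of the second fundamental form, so it must be absorbed into the good reaction term, and one has to verify that $-\tfrac{2w}{H}\eta\lambda_{\max}^2$ genuinely dominates it — which is exactly where the quadratic form of $\eta$, the bound on $|\nabla\alpha|$ over $B_R$, and the uniform positivity $\Hc_0>0$ of $H$ on $\overline{\Omega}_{\infty,T}$ enter. A secondary technical point is legitimizing Hamilton's tensor trick in the non-compact setting, namely that the supremum defining $\max_U P_i^j$ is attained at an interior or initial point; this is precisely what the vanishing of the weight $\eta(\alpha)$ on $\partial U_R$ buys.
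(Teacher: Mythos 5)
You have the right skeleton—Hamilton's eigenvector trick, the weight $\eta(\alpha)$ vanishing on the lateral boundary of $U_R$, the sign of the Codazzi term and of the drift terms at an interior maximum, the quadratic reaction $-\tfrac{2w}{H}\eta\lambda^2$ as the good term, and the quadratic $\eta$ producing the $(1-\theta)^{-2}$ factor—but the mechanism you propose for the cross term $\tfrac{2w\eta'}{H^2}P_i^j\,g(\nabla w^{-1},\nabla\alpha)$ does not close, and in fact you correctly flag it as the delicate point without resolving it.

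The Young's inequality absorption cannot work here. Writing $\lambda$ for $\lambda_{\max}(x_0,t_0)$, the best pointwise control you can get on $|\nabla w^{-1}|$ comes from $|\nabla w|\le|A|$, and at a max of $\eta\lambda$ one has $|A|\lesssim w\lambda+H$, so $|\nabla w^{-1}|\lesssim\lambda+1$. Plugging this into Young's against $-\tfrac{2w\eta}{H}\lambda^2$ (with $\eta'^2/\eta=4$ for $\eta=s^2$) leaves a residual of size $\tfrac{w}{H^3}|\nabla w^{-1}|^2|\nabla\alpha|^2\lesssim(\lambda+1)^2$ carrying no factor of $\eta(\alpha_0)$; since $\eta(\alpha_0)$ can be arbitrarily small at the maximizing point, this residual dominates the good term and the claimed inequality $0\le -c\,\eta(\alpha_0)\lambda^2 + C(1+\lambda)$ is not produced. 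You also omit from your analysis the term $-\tfrac{2C_R\eta' P_i^j}{R\Hc_0}$ appearing in Lemma \ref{CutEq} / Lemma \ref{LAEq}, which is precisely the device that controls the cross term.

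The paper's route is different: it uses the \emph{uniform-in-space} (but $T$-dependent) bound $|\nabla w^{-1}|\le D'$, available for $t<T$ from short-time existence applied to $|A|$, and then chooses the time-decay coefficient $C_R\ge D D' D'' R$ in the cutoff $\alpha$ so that $-\tfrac{2C_R\eta'}{R\Hc_0}P$ kills the cross term \emph{linearly}, without ever touching the quadratic reaction. The $T$-dependence of $D'$ then enters only through $C_R$, i.e. through the shape of $U_R$, while the coefficient $c_0$ in the final bound (an upper bound on $\tfrac12(n+8D^2)(Hw)^{-1}$ in the paper's proof) is $T$-independent; Corollary \ref{globalAcontrol} then checks that the sets $U_{R,\theta,t}$ do not degenerate as $R\to\infty$. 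This bootstrap—burn the short-time $|A|$ bound in the cutoff, not in the estimate—is the missing idea; without it you only reproduce the $T$-dependent bound you started with.

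One further small point: the paper runs the maximum principle as a first-crossing contradiction with $\Phi_i^j=C\delta_i^j-\eta P_i^j+\epsilon(t-\tau)\delta_i^j$ rather than at a realized interior max of $\eta\lambda_{\max}$, but that is a cosmetic variation; either form of Hamilton's tensor argument would do.
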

\begin{proof}
Now we would like to better understand some terms in the equation given in Lemma \ref{LAEq} and estimate the bad terms starting with the following term
\begin{align}
\frac{2w\eta'}{H^2}g(\nabla w^{-1},\nabla\alpha) = \frac{-2w\eta'}{H^2}g(\nabla w^{-1},\nabla\alpha)\le \frac{2w\eta'}{H^2}|\nabla w^{-1}||\nabla \alpha|.
\end{align}

Now we can use the fact that $\Sigma_t$ is well defined hypersurface expressed as a graph over a plane with bounded gradient, $v$, in $\Omega_{R,T}$ to deduce that there exists a $D > 0$ so that $D^{-2} \delta \le g \le D^2 \delta$, in $\Omega_{R,T}$. Hence $|\nabla \alpha| \le D|\nabla^0 \alpha| \le 2D\frac{|x|}{R} \le 2D$. We note that $D$ depends on upper and lower bounds on $y$ and a upper bound on $v$ since $g_{ij} = \frac{1}{y^2} \left (\delta_{ij} + y_iy_j \right )$ and note that Lemma \ref{globalwcontrol} and Theorem \ref{barriers} will give use the desired control.

Now we note that $|\nabla w^{-1}| = \frac{|\nabla w|}{w^2} \le D'$ in $\Omega_{R,T}$, which is equivalent to having a lower bound on $w$, which follows from Theorem \ref{globalwcontrol}, and a bound on $|A|^2$, which can be seen by choosing a vector $v$ tangent to $\Sigma_t$ and calculating,
\begin{align}
\nabla_v w = \nabla_v \bar{g}(\nu,\eta) = \bar{g}(\bar{\nabla}_v\nu,\eta^T) = A(v, \eta^T) \hspace{0.5cm} \Rightarrow \hspace{0.5cm}|\nabla w|^2 \le |A|^2.
\end{align}
For this term we use short time existence and the fact that $t < T$ to deduce a bound on $|A|$. 

Now if we choose $\eta(s) = s^2$ then the term $\frac{2 \eta'^2}{\eta} - \eta'' =2$ and so we can find
\begin{align}
&\frac{2 \eta'w}{H^2} |\nabla w^{-1}||\nabla \alpha | + \frac{|\nabla \alpha |^2}{H^2} \left ( \frac{2 \eta'^2}{\eta} - \eta'' \right ) - \frac{2C_R\eta'}{R\Hc_0} 
\\&\le \frac{1}{R\Hc_0} \left ( 2 \eta'w D D'R  - 2C_R\eta'\right )+\frac{8 D^2}{H^2}
\\&\le \frac{2\eta'}{R\Hc_0} \left (  D D' D'' R - C_R\right)+\frac{8 D^2}{\Hc_0}
\end{align}
where $D''$ is a upper bound on $w$ (implied by Lemma \ref{barriers}). Now we can choose $C_R \ge  D D' D'' R $ in order to get rid of the bad gradient terms that come from the cutoff function. The other term $\frac{8 D^2 }{H^2}$ will be dealt with as part of the zero order terms below.

 Now we look to understanding the zero order terms $\frac{n+8 D^2}{H^2}(\eta P_i^j) - \frac{2w}{\eta H} (\eta^2P^2)_i^j$. Now if we let $\lambda$ be the largest eigenvalue of $P_i^j$ at a point $(x,t) \in U_R$, then we find the following
\begin{align}
-\frac{2w}{\eta H}(\eta \lambda)^2+ \frac{n+8 D^2}{H^2} (\eta \lambda) &= -\frac{2w}{ H}\lambda \left (\eta \lambda - \frac{1}{2}(n+8 D^2) \eta (H w)^{-1}\right ) 
\\&\le -\frac{2w}{ H}\lambda \left (\eta \lambda - c_0 \eta \right ) 
\end{align}
at the point $(x,t)$ where $c_0$ is an upper bound on $\frac{1}{2}(n+8 D^2)(Hw)^{-1}$, as in the statement of the theorem. So we notice that this term is negative when $\eta \lambda > c_0 \eta$ and hence decreasing. We will use this intuition about the zero order terms later but we just take note of it now and move on to make this argument rigorous.

Now we are ready to give the proof of the lemma. Let $\Phi_i^j = C\delta_i^j -\eta P_i^j +  \epsilon(t- \tau)\delta_i^j $ where $C = \max \left (C_0, c_0 \eta \right )$ and $C_0$ is the maximum eigenvalue of $\eta P_i^j$ in the set $U_{R,1,0}$ and $\tau \ge 0$ will be chosen later. The goal is to show that the minimum eigenvalue of $C\delta_i^j -\eta P_i^j$ is positive.

For sake of contradiction assume that the minimum eigenvalue over $\overline{U}_R$ of $\Phi_i^j$ is negative. Then we first consider the case where there is a point $(x_0,t_0) \in U_R$ where $\Phi_i^j$ has a zero eigenvector, call it $\beta$, for the first time with eigenvector $v \in T_{x_0}\Sigma_{t_0}$. Then we use parallel translation to extend $v$ along radial geodesics emanating from $x_0 \in \Sigma_t$  in a neighborhood of $x_0$ and then extend it to be constant in time for a short amount of time. From this construction we find the following inequalities,
\begin{align}
\frac{\partial v}{\partial t}|_{(x_0,t_0)} &= 0 \hspace{0.5cm} \nabla v|_{(x_0,t_0)} = 0\hspace{0.5cm} \frac{\partial \Phi(v,v)}{\partial t} |_{(x_0,t_0)} \le 0 \hspace{0.5cm} 
\\&\nabla \Phi(v,v)|_{(x_0,t_0)} = 0\hspace{0.5cm} \Delta \Phi(v,v)|_{(x_0,t_0)} \ge 0.
\end{align}

We can also compute that,
\begin{align}
&\Delta(\Phi(v,v)) = g^{ij} \nabla_i \left ((\nabla_j\Phi)(v,v) + 2 \Phi(\nabla_jv,v) \right )
\\&= g^{ij} \left ( (\nabla_i\nabla_j \Phi)(v,v) + 4(\nabla_j\Phi)(\nabla_iv,v) + 2 \Phi(\nabla_i\nabla_j v,v) + 2 \Phi(\nabla_iv,\nabla_jv)\right )
\\&= (\Delta \Phi)(v,v) + 4(\nabla \Phi)(\nabla v,v) + 2\Phi(\Delta v,v) + 2\Phi(\nabla v, \nabla v),
\end{align}
and hence we find,
\begin{align}
 \Delta(\Phi(v,v))|_{(x_0,t_0)} &\ge2\Phi(\Delta v,v)|_{(x_0,t_0)} = 0,
\end{align}
where we used the fact that $v$ is a zero eigenvector for $\Phi$ at the point $(x_0,t_0)$ in the last equality.

Then we find the following evolution inequality at the point $(x_0,t_0)$,
\begin{align}
(\partial_t - \frac{1}{H^2}\Delta)(\eta \Phi_i^jv^iv_j)&\ge \frac{2w}{\eta H}(\eta^2P^2)_i^jv^iv_j - \frac{n+8D^2}{H^2}(\eta P_i^jv^iv_j) + \epsilon\delta_i^jv^iv_j.
\end{align}
Now notice we find the following inequality at the point $(x_0,t_0)$ where we let $\lambda = P_i^jv^iv_j$,
\begin{align}
\frac{2w}{\eta H}(\eta^2P^2)_i^jv^iv_j - \frac{n+8D^2}{H^2}(\eta P_i^jv^iv_j) +  \epsilon\delta_i^jv^iv_j = \frac{2w}{ H}\lambda \left (\eta \lambda - c_0 \eta \right ) +  \epsilon\delta_i^jv^iv_j > 0,
\end{align}
where the strict inequality follows since $C$ was chosen to be larger than $c_0 \eta$, $\beta = C - \lambda \eta +\epsilon (t_0 - \tau)\delta_i^jv^iv_j = 0$ so $ \lambda \eta = C + \epsilon (t_0 - \tau)\delta_i^jv^iv_j$ and by choosing $\tau$ so that $t_0 - \tau >0$.

By our assumptions though we know that $ \frac{\partial \Phi(v,v)}{\partial t} |_{(x_0,t_0)} \le 0$ and $ \Delta \Phi(v,v)|_{(x_0,t_0)} \ge 0$ and hence we find
\begin{align}
(\partial_t - \frac{1}{H^2}\Delta) \Phi(v,v) \le 0
\end{align}
which is a contradiction so if we let $\epsilon \rightarrow 0$ we see that $C\delta_i^j -\eta P_i^j$ cannot attain a strictly negative eigenvalue on $U_R$.

Now we know that $C\delta_i^j -\eta P_i^j$ cannot obtain a strictly negative eigenvalue on $\{\alpha = 0 \}$ and we see by construction that $C\delta_i^j -\eta P_i^j$ does not obtain a negative eigenvalue at time $t = 0$ since $C$ was chosen to be less than $C_0$, the minimum eigenvalue of $\eta P_i^j$ in the set $U_{R,1,0}$. So it doesn't obtain one anywhere on $U_R$ and hence $\eta P_i^j$ is bounded from above, as desired. 

More specifically we have that,
\begin{align}
\max_{U_{R,1,t}}\eta P_i^j \le C = \max \left ( \max_{U_{R,1,0}} \eta P_i^j, c_0 \eta \right )
\end{align}

Since we know that $0 \le \alpha \le R^2$ on $U_R$ we know that $\eta(\alpha) \le  R^4$ on $U_R$ and since $\alpha \ge (1 - \theta) R^2$ on the set $U_{R,\theta,t}$ we know that $\eta(\alpha) \ge (1 - \theta)^2 R^4$ and so we have
\begin{align}
 \max_{U_{R,\theta,t}}P_i^j \le C = \max \left ( \max_{U_{R,1,0}} P_i^j, c_0 \right )(1-\theta)^{-2},
\end{align}
which yields the desired result.

\end{proof}

\begin{Cor}\label{globalAcontrol}
If we assume that $\Sigma_0$ is a hypersurface to which the hypotheses of Theorem \eqref{LTE} apply then we find
\begin{align}
|A| \le C
\end{align}
where $C$ depends on the initial data but not on $T$.
\end{Cor}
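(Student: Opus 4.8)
The plan is to derive the bound directly from Lemma \ref{LAEs} by letting the radius tend to infinity, after observing that every constant entering that lemma is controlled purely by the initial data of $\Sigma_0$, and then to convert control of the largest eigenvalue of $P = w^{-1}A$ into control of $|A|$ itself.

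Fix $\theta \in (0,1)$ once and for all, say $\theta = \tfrac12$. For every $R > 0$, Lemma \ref{LAEs} gives
\[
\max_{U_{R,\theta,t}} P_i^j \;\le\; 4\,\max\!\Big(\max_{U_{R,1,0}} P_i^j,\; c_0\Big)\qquad\text{on }[0,T).
\]
The term $\max_{U_{R,1,0}} P_i^j$ is the supremum of $w^{-1}|A| = vy\,|A|$ over the initial slice $\{t=0\}$, which by the hypotheses of Theorem \ref{LTE} is at most $v_{sup}(0)\,y_{sup}(0)\,A_0$, independent of $R$ and $T$. The threshold $c_0$ bounds $\tfrac12(n+8D^2)(Hw)^{-1}$ on $U_R$; here $D$ is the constant in $D^{-2}\delta \le g \le D^2\delta$, which depends only on the uniform bounds on $y$ and $v$ furnished by Theorem \ref{barriers} and Theorem \ref{globalwcontrol}, while $(Hw)^{-1}=u$ satisfies $u \le u_{sup}(0)$ by the estimates in the proof of Theorem \ref{globalHcontrol}. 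Hence the whole right-hand side is a constant depending only on $\Sigma_0$. Since the regions $U_{R,\theta,t}$ increase to $\R^n\times[0,T)$ as $R\to\infty$, letting $R\to\infty$ shows that the largest eigenvalue of $P = w^{-1}A$ is bounded on $\R^n\times[0,T)$ by a constant $C'=C'(\Sigma_0)$.

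It remains to pass from $P$ to $A$, and this is the one delicate point. Because $A_i^j = w\,P_i^j$, the largest eigenvalue of $A$ equals $w$ times that of $P$, and $w \ge w_{inf}(0)e^{t/n}$ grows without bound, so the constant $C'$ alone does not suffice for a $T$-independent conclusion. The fix is to run the estimate of Lemma \ref{LAEs} from an arbitrary base time $t_0\in[0,T)$ rather than from $t=0$: the initial-slice term becomes $\sup_{\{t=t_0\}} w^{-1}|A|$ and the threshold may be taken to bound $\tfrac12(n+8D^2)(Hw)^{-1}$ only on $[t_0,T)$, which by the proof of Theorem \ref{globalHcontrol} is of size $u_{sup}(0)e^{-t_0/n}$. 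Feeding in the bound already available at time $t_0$ and iterating in $t_0$ upgrades the conclusion to $\lambda_{\max}(P)(x,t) \le C'' e^{-t/n}$ with $C''=C''(\Sigma_0)$; since $w \le y_{inf}(0)^{-1}e^{t/n}$ by Theorem \ref{barriers}, this yields $\lambda_{\max}(A) \le C''/y_{inf}(0)$, with no dependence on $T$.

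Finally, because $H = \operatorname{tr} A$ and $H$ is bounded below by a positive constant on $\R^n\times[0,T)$ (Theorem \ref{globalHcontrol}), the estimate $\lambda_{\min}(A) \ge \inf H - (n-1)\lambda_{\max}(A)$ turns the one-sided bound into a two-sided bound on every principal curvature, and therefore $|A| \le C$ with $C=C(\Sigma_0)$. The main obstacle throughout is exactly the presence of the support factor $w$, which grows along the flow: quoting Lemma \ref{LAEs} once from $t=0$ only controls $w^{-1}|A|$, and recovering a genuine, time-uniform bound on $|A|$ forces one to exploit the decay of $(Hw)^{-1}$ from Theorem \ref{globalHcontrol} and to re-run the cutoff eigenvalue estimate from moving base times; the exhaustion in $R$ and the bookkeeping of constants are then routine.
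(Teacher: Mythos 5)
You have put your finger on a genuine subtlety that the paper's own proof glosses over: unpacking $P_i^j = w^{-1}A_i^j$ to recover a bound on $|A|$ requires multiplying by $w$, and $w=\tfrac{1}{vy}$ grows like $e^{t/n}$ (indeed $w_{inf}(0)e^{t/n} \le w \le y_{inf}(0)^{-1}e^{t/n}$), so a $T$-independent constant bound on $\lambda_{\max}(P)$ gives only $\lambda_{\max}(A) \lesssim e^{t/n}$. Your observation that $H=\operatorname{tr} A$ bounded above and below turns a one-sided eigenvalue bound into a two-sided bound on all principal curvatures and hence on $|A|$ is also correct and is a step the paper leaves implicit. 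These are legitimate concerns.

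The proposed repair, however, is not rigorous as stated. Running Lemma \ref{LAEs} from a base time $t_0$ gives, for $t\ge t_0$,
\begin{align}
\lambda_{\max}(P)(t) \le \max\bigl(\lambda_{\max}(P)(t_0),\,c_0(t_0)\bigr)(1-\theta)^{-2},
\end{align}
and this is only a ``no increase'' statement: if the bound you feed in at time $t_0$ is a constant $C_1 > c_0(t_0)$, the max is attained by $C_1$ and the iteration reproduces $C_1$, never the decaying threshold. Nothing in the lemma as stated forces $\lambda_{\max}(P)$ to drain down toward $c_0(t)$; you would have to strengthen the cutoff/contradiction argument of Lemma \ref{LAEs} itself (e.g.\ by comparing $\eta P_i^j$ against a time-decaying barrier $\Phi(t)\delta_i^j$ rather than a constant $C\delta_i^j$, exploiting the growing prefactor $w/H$ in the quadratic term) to extract decay, not just re-apply the stated conclusion from later base times. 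There is a second, quieter issue you do not address: the constant $D$ in $D^{-2}\delta \le g \le D^2\delta$ entering $c_0 \sim \tfrac12(n+8D^2)(Hw)^{-1}$ scales like $\sup y^{-1} \sim e^{T/n}$, so without further care the threshold itself carries $T$-dependence, which defeats the goal. A cleaner route to a genuinely $T$-independent bound is to run the same cutoff argument on $M_i^j = HA_i^j$ (whose evolution equation appears in Lemma \ref{EvolutionEquations}), since there the zero-order terms are $\tfrac{2}{H^2}\bigl(n M_i^j - (M^2)_i^j\bigr)$ with the constant threshold $n$; one then gets $\lambda_{\max}(M) \le \max(\lambda_{\max}(M)|_{t=0},\,n) + \text{cutoff error}$ and recovers $|A| \le |M|/H_{\inf}$ directly.
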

\begin{proof}
By Lemma \ref{LAEs} we find the bound,
\begin{align}
 \max_{U_{R,\theta,t}}P_i^j \le C = \max \left ( \max_{U_{R,1,0}} P_i^j, \frac{nc_0}{2} \right )(1-\theta)^{-2},
\end{align}
and since $\displaystyle \sup_{\Sigma_0} P_i^j \le C$ by assumption we can take the limit as $R \rightarrow \infty$ to find,
\begin{align}
 \max_{U_{\infty,\theta,t}}P_i^j \le C = \max \left ( \sup_{\Sigma_0} P_i^j, c_0 \right )(1-\theta)^{-2},
\end{align}
but one may be concerned that $U_{\infty}$ could degenerate, i.e. $\R^n \times [0,\epsilon) \not \subset U_{\infty}$ for every $\epsilon >0$. To see that this cannot happen 
we can find the following characterization of the largest time $t$ that can occur in $U_R$,
\begin{align}
\alpha > 0 \hspace{0.25cm} &\Rightarrow\hspace{0.25cm} R^2 - |x|^2 - \frac{2}{\Hc_0} \left (ny_0^2 + 4y_0R + C_R \right )t > 0 \\&\Rightarrow\hspace{0.25cm} t < \frac{\Hc_0 (R^2 - |x|^2)}{2(ny_0^2 + 4y_0R + R C )},
\end{align}

which for fixed $x$ has a limit as $R \rightarrow \infty$ and tells us that $t < \infty$ and hence $U_{\infty}$ is non-degenerate.
By unpacking the definition of $P_i^j$ and using the bounds on $w$ from Theorem \ref{barriers} and Lemma \ref{globalwcontrol} we find the desired estimate for $|A|$.
\end{proof}

Now we can prove a long time existence theorem.
\begin{Thm} \label{ContinuationCriterion}
Let $\Sigma_0$ be a hypersurface satisfying the hypotheses of Theorem \eqref{LTE} then $\Sigma_t$, the corresponding solution to IMCF, exists for all time $t \in [0,\infty)$.
\end{Thm}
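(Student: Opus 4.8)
The plan is to run the usual continuation argument: assume the maximal existence time is $T<\infty$ and derive a contradiction by showing that the uniform-in-time estimates of the previous subsections let us restart the flow at time $T$. By Lemma \ref{equivalence} it suffices to work with the scalar fully nonlinear parabolic equation \eqref{IMCF3} for the graph function $y$, and the key point is that all the a priori bounds obtained so far depend only on the initial data, not on $T$: Theorem \ref{barriers} gives $y_0 e^{-t/n}\le y\le y_1 e^{-t/n}$ (so $y$ stays bounded above and, since $T<\infty$, uniformly bounded away from $\{y=0\}$ by $y_0 e^{-T/n}>0$), Theorem \ref{globalwcontrol} gives a uniform bound on $v=\sqrt{1+|\nabla^0 y|^2}$, Theorem \ref{globalHcontrol} gives two-sided bounds on $H$ bounded away from $0$ and $\infty$, and Corollary \ref{globalAcontrol} gives $|A|\le C$.

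First I would note that the two-sided bound on $H$ together with the bounds on $y$ and $v$ forces the linearization of $F$ to have ellipticity constants bounded above and below uniformly on $\R^n\times[0,T)$, so \eqref{IMCF3} stays uniformly parabolic; and the bound on $|A|$, combined with the metric comparison $D^{-2}\delta\le g\le D^2\delta$ used in the proof of Lemma \ref{LAEs}, yields a uniform bound on $|\nabla^0\nabla^0 y|$. Hence $y$ and its spatial derivatives up to second order are uniformly bounded on $\R^n\times[0,T)$.

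Next I would upgrade $C^2$ to $C^{2,\alpha}$. For any $\delta>0$, Theorem \ref{HigherRegularity} makes $y$ smooth (in particular $C^4$) on $\R^n\times[\delta,T)$, so one may differentiate \eqref{IMCF3} and view the first and second spatial derivatives of $y$ as solutions of uniformly parabolic equations whose coefficients are controlled by the uniform $C^0,C^1,C^2$ bounds; the Krylov--Safonov interior Hölder estimates on parabolic cylinders $Q_r(x_0,t_0)$ with $t_0\ge 2\delta$ then give a uniform interior $C^{2,\alpha}$ bound, and since the background bounds are uniform over all of $\R^n$ the local estimates patch into a global bound $|y|_{\frac{2+\alpha}{2},2+\alpha}(\R^n\times[\delta,T))\le C$. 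Parabolic Schauder theory \cite{K}, bootstrapping on the now-linear equation with $C^\alpha$ coefficients, then gives uniform $C^{k+2+\alpha}$ bounds on $\R^n\times[\delta,T)$ for every $k$.

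Finally, the uniform bound on $y_t$ gives equicontinuity in time, so $y(\cdot,T):=\lim_{t\to T}y(\cdot,t)$ exists and, by the estimates above, lies in $\Lambda\cap C^{2+\alpha}(\R^n)$ — it is a graph over $\{y=0\}$ with $0<y_0 e^{-T/n}\le y(\cdot,T)\le y_1 e^{-T/n}$, bounded $v(\cdot,T)$, and $H(\cdot,T)$ bounded away from $0$. Then Theorem \ref{STE} applies with initial datum $y(\cdot,T)$ and extends the solution to $[0,T+\epsilon)$, contradicting maximality of $T$; hence $T=\infty$. I expect the main obstacle to be the regularity upgrade in the non-compact setting: one must verify that the Krylov--Safonov and Schauder estimates used are genuinely interior (in time, away from $t=0$; there is no spatial boundary) and that their constants depend only on the uniform-in-$\R^n$ a priori bounds, so that the purely local estimates assemble into the global bounds needed to restart the flow — and the fully nonlinear nature of \eqref{IMCF3} is precisely what forces the preliminary use of Theorem \ref{HigherRegularity} to secure enough differentiability before invoking Krylov--Safonov.
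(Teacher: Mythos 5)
Your proposal follows essentially the same continuation argument as the paper: use the uniform $C^0$ bounds on $y$ (Theorem \ref{barriers}), the $H$ bounds (Theorem \ref{globalHcontrol}), and the $|A|$ bound (Corollary \ref{globalAcontrol}) to get $C^2$ control and uniform parabolicity of \eqref{IMCF3}, upgrade to $C^{2,\alpha}$ via Krylov--Safonov (with Theorem \ref{HigherRegularity} supplying the $C^4$ regularity that the paper explicitly flags as a prerequisite in the Note preceding Lemma \ref{equivalence}), pass to a limit $\Sigma_T$, and restart the flow via Theorem \ref{STE} to contradict maximality. Your extra care in noting that the local Krylov--Safonov/Schauder estimates must be checked to patch into global bounds over the non-compact $\R^n$ is a fair observation about a detail the paper handles only implicitly, but it does not change the route.
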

\begin{proof}
Assume that $T$ is the maximal existence time and using the upper and lower bounds on $H$ of Theorem \ref{globalHcontrol}, combined with  upper bound on $|A|$ of Theorem \ref{globalAcontrol} we find $C^2$ control on the solution $\Sigma_t$. Then since $y \ge c >0$ for $T < \infty$ by Theorem \ref{barriers} we know that \eqref{IMCF3} is uniformly parabolic and so we combine the $C^2$ control with the results of Krylov \cite{K2} to obtain $C^{2,\alpha}$ control on $\Sigma_t$ and hence if we consider a sequence of times $T_k \in [0,T)$ so that $T_k \nearrow T$ then we know that $\Sigma_{T_k} \rightarrow \Sigma_T$ in $C^{2,\alpha}$ where $\Sigma_T$ is a $C^{2,\alpha}$ hypersurface. Then by short time existence applied to $\Sigma_T$ we can extend the flow beyond time $T$, contradicting the assumption that $T$ was the maximal existence time.
\end{proof}

\subsection{Asymptotic Properties}\label{Subsec:Asymptotic}

Now we move on to discussing asymptotic analysis where our goal is to state precise theorems with brief proofs but the reader can refer to \cite{BA,BHW,CG1,CG3,S,U} for further details. We start with a $C^1$ asymptotic estimate.

\begin{Lem}\label{AG}
For hypersurfaces satisfying the hypotheses of Theorem \eqref{LTE} the corresponding solution to IMCF in hyperbolic space satisfies
\begin{align}
v^2-1=|\nabla^0 y|^2 \le Ce^{-2t/n}
\end{align}
\end{Lem}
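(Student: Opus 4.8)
The plan is to find an evolution equation for $v^2 - 1 = |\nabla^0 y|^2$, or more conveniently for the related quantity $w^{-1} = vy$ combined with $y$, and then apply the ODE maximum principle at infinity (Theorem \ref{ODEmax}) to extract exponential decay. The cleanest route is to avoid differentiating $v$ directly and instead exploit the quantities whose evolution equations are already recorded in Lemma \ref{EvolutionEquations}. Since $w^{-1} = vy$, we have $v = w^{-1}/y$ and hence $v^2 - 1 = (w^{-1})^2/y^2 - 1 = (w^{-2} - y^2)/y^2$. So it suffices to control $w^{-2} - y^2$ from above and bound $y$ away from zero; but the latter is \emph{not} available uniformly in time since $y \to 0$, so I would instead track the scale-invariant combination directly.

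First I would compute the evolution equation for $v$ on $\Sigma_t$. One clean way: $v = \bar g(\nu,\partial_y)^{-1} y^{-1}\cdot\text{(something)}$ — more precisely, since $w = \frac{1}{vy}$ we get $v = \frac{1}{wy}$, so $\ln v = -\ln w - \ln y$. Using the evolution equations $(\partial_t - \frac{1}{H^2}\Delta)w = \frac{|A|^2}{H^2}w$ and the ODE $\frac{dy}{dt}$-type relation for $y$ (via $\frac{\partial}{\partial t}(y^{-2}) = \frac{2w}{yH}$ from the proof of Theorem \ref{barriers}), together with $(\partial_t - \frac{1}{H^2}\Delta)u = 2\frac{g(\nabla w,\nabla u)}{H^2w^2} - \frac{nu}{H^2}$ where $u = \frac{1}{wH}$, I would assemble the evolution inequality for $\phi := v^2 - 1$. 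The expectation is that $\phi$ satisfies
\begin{align*}
\left(\partial_t - \tfrac{1}{H^2}\Delta\right)\phi \le \langle X, \nabla \phi\rangle - \tfrac{2}{n}\phi + (\text{error terms controlled by previous estimates}),
\end{align*}
where the $-\frac{2}{n}\phi$ coefficient comes from the $\frac{n}{H}$ forcing in the $H$-equation combined with $|A|^2 \ge H^2/n$ and the asymptotics $H \to n$ from Theorem \ref{globalHcontrol} (which gives $H^2 \to n^2$, hence $\frac{n}{H^2}\to \frac{1}{n}$ and a factor $2$ from squaring $v$). The $C^0$, $C^1$, $C^2$ bounds from Theorem \ref{barriers}, Theorem \ref{globalwcontrol}, Theorem \ref{globalHcontrol}, and Corollary \ref{globalAcontrol} guarantee all coefficients are bounded and the Omori–Yau hypotheses persist for all time, so Theorem \ref{ODEmax} applies to $\phi$.

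Having such a differential inequality, I would apply Theorem \ref{ODEmax} to $\phi = v^2 - 1$ (which is bounded in space by the $C^1$ estimate $v \le \frac{y_{sup}(0)}{y_{inf}(0)}v_{sup}(0)$, with bounded gradient and the requisite Hölder control on $\phi_t$ coming from the $C^{2,\alpha}$ regularity established in Theorem \ref{ContinuationCriterion}). Comparing with the ODE $\varphi' = -\frac{2}{n}\varphi$ (absorbing the strictly-decaying error terms, or more carefully comparing with $\varphi' = -(\frac{2}{n} - \delta(t))\varphi + c\,e^{-t/n}$ type equations where $\delta(t)\to 0$) yields $\phi_{sup}(t) \le C e^{-2t/n}$.

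The main obstacle will be the computation of the evolution inequality for $v^2 - 1$ itself and, more subtly, controlling the error terms so that the decay rate is genuinely $e^{-2t/n}$ rather than something slower: the coefficient $\frac{n}{H^2}$ in the relevant forcing term only converges to $\frac{1}{n}$ as $t\to\infty$ (from Theorem \ref{globalHcontrol}, $H^2 = n^2 + C_0 e^{-2t/n}$ in the interesting case), so the "constant" in the ODE comparison is time-dependent and approaches the sharp value exponentially fast. Handling this requires either a bootstrap (first prove decay at \emph{some} exponential rate, then feed it back to improve the rate to $2/n$) or a direct integrating-factor argument with the time-dependent coefficient; either way one must check that the $C_0 e^{-2t/n}$ correction does not destroy the $e^{-2t/n}$ conclusion, which it does not since it is of the same order. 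The remaining steps — verifying boundedness and Hölder bounds to legitimately invoke Theorem \ref{ODEmax}, and the final integration — are routine given the estimates already in hand.
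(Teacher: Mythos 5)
Your proposal takes a genuinely different route from the paper's. The paper's proof is a direct Euclidean computation: it differentiates the graph PDE $\partial_t y = -1/F$ by $y^k\nabla^0_k$, rearranges, and obtains the explicit parabolic equation \eqref{GradientEquation} for $\psi = |\nabla^0 y|^2$ whose zero-order coefficient is exactly $-\tfrac{2n}{H^2}$ (after noting $v^2F^2y^2 = H^2$), plus the negative-definite term $-\delta^{ij}\delta^{lm}y_{il}y_{jm}$ which can simply be dropped. The sharp rate $e^{-2t/n}$ then falls out by integrating $\tfrac{d\psi_{\sup}}{dt}\le -\tfrac{2n}{H^2}\psi_{\sup}$ using $H^2\le n^2+C_0e^{-2t/n}$. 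You instead propose to assemble an evolution inequality for $\phi=v^2-1$ from the intrinsic quantities $w^{-1}$, $y$, $u=1/(wH)$ whose evolutions are catalogued in Lemma \ref{EvolutionEquations}, together with the transport relation $\partial_t(y^{-2})=2w/(yH)$.

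There are two concrete gaps. First, Lemma \ref{EvolutionEquations} contains no parabolic evolution equation for the coordinate function $y$ restricted to $\Sigma_t$; what you cite from the proof of Theorem \ref{barriers} is only the first-order transport equation along the normal flow, with no $\Delta y$ term. To form $(\partial_t-\tfrac{1}{H^2}\Delta)(w^{-1}/y)$ you need the full parabolic operator applied to $y$, which must come from Proposition \ref{EDE} with $f=y$ (or an equivalent direct computation); this step is absent. Second, and more seriously, the decisive claim---that the assembled evolution has zero-order coefficient $-\tfrac{2n}{H^2}$ on $\phi$---is asserted as an ``expectation,'' not derived. Since both $w^{-1}=vy$ and $y$ decay at the same leading rate $e^{-t/n}$ while $v=w^{-1}/y\to 1$, the decay of $v-1$ is a subleading cancellation; extracting the exact coefficient requires matching all the lower-order terms (including corrections like $(n-2)y/H^2$ that appear when $\Delta y$ is computed), and without carrying out that calculation you cannot confirm the sharp rate rather than some slower one. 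Once you have the evolution inequality in hand, the remainder of your plan (verifying Omori--Yau applicability, bounding $\phi$, $|\nabla\phi|$, $\phi_t$, and integrating the time-dependent ODE) is sound and mirrors the paper, but the heart of the lemma is precisely the calculation you have left undone.
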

\begin{proof}
If we define $\psi = y_{x_1}^2 + ...+y_{x_n}^2 = |\nabla^0 y|^2$ and then differentiate the equation $\frac{\partial y}{\partial t} = \frac{-1}{F}$ w.r.t $y^k \nabla^0_k$, where $F = \frac{ny^{-1} + \tilde{\delta}^{ij}y_{ij}}{v^2} = \frac{H}{vy}$ we find
\begin{align}
\frac{\partial \psi}{\partial t}&= y^k \left ( \frac{\partial y}{\partial t} \right ) _k = y^k \left ( \frac{-1}{F} \right ) _k  = \frac{1}{F^2} y^k F_k
\\&= \frac{1}{v^2F^2}y^k \left ( -2F y^ly_{lk} - ny^{-2}y_k +\tilde{\delta}^{ij} y_{ijk} -2\frac{y^i_k y^jy_{ij}}{v^2} +2 \frac{y^iy^jy_{ij}y^ly_{lk}}{v^4}\right )
\\&= \frac{1}{v^2F^2}\left ( \tilde{\delta}^{ij} y_{ijk}y^k  + 2 G^k \psi_k - \frac{2n \psi}{y^2}\right )
\end{align}
where notice that $\psi_k = y^ly_{lk}$ and we have that $G^k = -Fy_k  -\frac{1}{v^2} y_{jk}y^j +\frac{1}{v^4}y^iy^jy_{ij}y_k$.

Now if we also notice the following
\begin{align}
\tilde{\delta}^{ij} \psi_{ij} = \tilde{\delta}^{ij}(y_{kij}y^k + y^k_jy_{ki} ) = \tilde{\delta}^{ij}y_{ijk}y^k +\tilde{\delta}^{ij} y^k_jy_{ki}
\end{align}
where we notice that the difference between this case and the graph over a sphere case is that we don't get an extra term from commuting derivatives in our case.

We can also rewrite $\tilde{\delta}^{ij} y^k_jy_{ki}$ in the following way
\begin{align}
\tilde{\delta}^{ij} y^k_jy_{ki} = \delta^{lm} \tilde{\delta}^{ij} y_{li}y_{mj} = \delta^{lm}\delta^{ij} y_{li}y_{mj} - \frac{\delta^{lm}}{v^2} y^iy_{li}y^jy_{mj} = \delta^{ij}\delta^{lm}y_{il}y_{jm} - \frac{1}{v^2}\psi^k\psi_k
\end{align}

So that we now obtain the desired evolution equation
\begin{align}\label{GradientEquation}
\frac{\partial \psi}{\partial t}&= \frac{1}{v^2F^2}\left ( \tilde{\delta}^{ij} \psi_{ij}  + 2 G^k \psi_k + \frac{1}{v^2}\psi^k\psi_k - \frac{2n \psi}{y^2} -  \delta^{ij}\delta^{lm}y_{il}y_{jm}\right )
\end{align}

Now we can use this and Theorem \eqref{ODEmax} to derive a differential inequality for $\psi_{sup}(t)$, at points of differentiability
\begin{align}
\frac{d \psi_{sup}}{dt} \le \frac{-2n}{H^2} \psi_{sup} \le \frac{-2n}{n^2 + C_0e^{-2t/n}} \psi_{sup} \le -2 \left ( \frac{1}{n} - \bar{C} e^{-2t/n}\right) \psi_{sup}(t)
\end{align}
where we have used the bound $H^2 \le n^2 + C_0e^{-2t/n}$ and chosen a constant $\bar{C} > 0$. 

Now by integrating this differential inequality we find
\begin{align}
\psi_{sup} \le D e^{-2t/n - ne^{-2t/n}}
\end{align}
for some constant $D>0$ which implies that $\psi=|\nabla^0 y|^2= O(e^{-2t/n})$, as desired.
\end{proof}

Now we move from $C^1$ bounds to $C^2$ bounds.

\begin{Cor}\label{CrudeC^2Bound}
For hypersurfaces satisfying the hypotheses of Theorem \eqref{LTE} the corresponding solution to IMCF in hyperbolic space satisfies
\begin{align}
|\nabla^2y| \le C e^{t/n}
\end{align}
\end{Cor}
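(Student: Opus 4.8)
The plan is to convert the uniform bound $|A| \le C$ of Corollary \ref{globalAcontrol} into a pointwise bound on the flat Hessian $\nabla^0\nabla^0 y$ by writing down the exact algebraic relation between the two, and then to absorb the growth coming from the induced metric using the height and gradient estimates already established (Theorem \ref{barriers}, Theorem \ref{globalwcontrol}, Lemma \ref{AG}). No maximum principle is needed here; the content is entirely in the bookkeeping of conformal factors.

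\textbf{Step 1 (the algebraic identity).} Using the conformal connection formula \eqref{exteq} with $\lambda(y) = 1/y$, together with the graph parametrization $\varphi(x) = (x,y(x))$ and $\nu = y\nu^0$ where $\nu^0 = v^{-1}(\nabla^0 y, -1)$ is the downward flat unit normal, a short computation (in which the tangential piece $(Xy)\nu^0$ cancels against $-\langle \partial_y,\nu^0\rangle_0\, X$ when evaluated along $\Sigma_t$) yields
\begin{align}
A_{ij} &= \frac{y_{ij}}{y\,v} + \frac{\delta_{ij} + y_i y_j}{y^2\,v}, \\
y_{ij} &= y\,v\,A_{ij} - \frac{\delta_{ij} + y_i y_j}{y}.
\end{align}
As a consistency check, contracting the first identity with $g^{ij} = y^2\tilde{\delta}^{ij}$ recovers $H = (n + y\tilde{\delta}^{ij} y_{ij})/v$, and specializing to a horosphere gives $A = \bar{g}$, $H = n$.

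\textbf{Step 2 (estimating each term).} Since the induced metric is $g_{ij} = y^{-2}(\delta_{ij} + y_i y_j)$, we have $|\partial_i\varphi|_g^2 = g_{ii} \le v^2/y^2$, hence
\begin{align}
|A_{ij}| &= |A(\partial_i\varphi,\partial_j\varphi)| \le |A|\,|\partial_i\varphi|_g\,|\partial_j\varphi|_g \le C\,\frac{v^2}{y^2}.
\end{align}
Together with $v \le \frac{y_{sup}(0)}{y_{inf}(0)} v_{sup}(0)$ from Theorem \ref{globalwcontrol} and $y \ge y_{inf}(0)\,e^{-t/n}$ from Theorem \ref{barriers}, this gives $|y\,v\,A_{ij}| \le C/y \le C e^{t/n}$. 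For the remaining term, $|\delta_{ij} + y_i y_j| \le 1 + |\nabla^0 y|^2 \le 1 + C e^{-2t/n}$ by Lemma \ref{AG}, while $1/y \le y_{inf}(0)^{-1} e^{t/n}$ by Theorem \ref{barriers}, so it is also bounded by $C e^{t/n}$. Summing the identity of Step 1 over $i,j$ gives $|\nabla^0\nabla^0 y| \le C e^{t/n}$ with $C$ depending only on the initial data.

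\textbf{Main obstacle.} There is no analytic difficulty, but the accounting must be tracked carefully: the bound $|A| \le C$ is measured with the induced hyperbolic metric $g$, which itself blows up like $y^{-2} \asymp e^{2t/n}$, so expressing $A$ in the flat coordinate basis already costs a factor $e^{2t/n}$. It is exactly the decaying factor $y \asymp e^{-t/n}$ multiplying $A_{ij}$ in Step 1 --- together with the \emph{boundedness} (not decay) of $v$ from Theorem \ref{globalwcontrol} --- that brings the final growth rate down to $e^{t/n}$. This is why the estimate is only ``crude''; the sharp $C^2$ decay is obtained later by a separate method.
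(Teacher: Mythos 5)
Your proof is correct and takes essentially the same route as the paper: both unpack the graph formula relating $A_{ij}$ to $\nabla^0\nabla^0 y$, invert it algebraically, and feed in the global $|A|$ bound together with the $C^0$ and $C^1$ estimates, the factor $1/y \le C e^{t/n}$ being the sole source of growth. Your version is in fact slightly more careful than the paper's one-line computation, which writes $A_{ij} = \tfrac{1}{yv}y_{ij} + g_{ij}$ when the second term should carry an extra $v^{-1}$ (this is immaterial for the estimate since $v$ is bounded above and below, but your formula $A_{ij} = \tfrac{y_{ij}}{yv} + \tfrac{\delta_{ij}+y_iy_j}{y^2 v}$ is the consistent one).
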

\begin{proof}
For graphs we can write $A_{ij} = \frac{1}{yv} y_{ij} + g_{ij}$ and $g^{ij} = y^2\tilde{\delta}^{ij}$ and hence $A_i^j = \frac{y}{v}\tilde{\delta}^{ik}y_{kj} + \delta_i^j$. So we can rewrite $\tilde{\delta}^{ik}y_{kj} = \frac{v}{y} ( A_i^j-\delta_i^j$ from which it follows that $|\nabla^2y| \le \frac{v|A|}{y} \le Ce^{t/n}$.
\end{proof}

Now we would like to improve on the $C^2$ bound of Corollary \ref{CrudeC^2Bound}.

\begin{Lem}\label{C^2Decay}
For hypersurfaces satisfying the hypotheses of Theorem \eqref{LTE} the corresponding solution to IMCF in hyperbolic space satisfies
\begin{align}
|\nabla^0\nabla^0y|\le C e^{-\gamma t}
\end{align}
for $(x,t) \in \R^n\times[0,\infty)$, $\gamma > 0$.
\end{Lem}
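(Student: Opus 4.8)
The plan is to mirror the passage from the $C^{0}$ to the $C^{1}$ estimate in Lemma \ref{AG}: differentiate the graph equation \eqref{IMCF3} once more, derive a scalar parabolic equation for $\varphi:=|\nabla^0\nabla^0y|^{2}=\sum_{i,j}y_{ij}^{2}$ carrying a strictly dissipative zeroth order term, and feed it into the ODE maximum principle at infinity (Theorem \ref{ODEmax}), using the cutoff of Lemma \ref{CutEq} to handle the third-derivative terms. This works directly with the graph function and never needs to control the individual principal curvatures, which is why it is a genuinely ``different method'' from the tensor maximum principles used elsewhere in the paper.

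\textbf{Step 1: the equation for $\varphi$.} Differentiating $\partial_t y=-F^{-1}$, $F=H/(yv)$, twice and contracting against $y^{ij}$ exactly as in the derivation of \eqref{GradientEquation} yields an equation of the form
\[
\partial_t\varphi=\frac{1}{v^{2}F^{2}}\Big(\tilde\delta^{ij}\varphi_{ij}+\langle X,\nabla^0\varphi\rangle-\frac{2n}{y^{2}}\varphi-|\nabla^0\nabla^0\nabla^0 y|^{2}_{\tilde\delta}+E\Big),
\]
where $X$ is bounded in terms of the $C^{1}$ and (crude) $C^{2}$ data already in hand, $|\nabla^0\nabla^0\nabla^0y|^{2}_{\tilde\delta}\ge 0$ is the contraction of the third derivatives against $\tilde\delta$, and $E$ collects the remaining third-derivative terms, appearing quadratically with coefficients bounded by $|\nabla^0\nabla^0y|$ times bounded quantities. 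The algebraic identity $\tfrac{1}{v^{2}F^{2}}\cdot\tfrac{2n}{y^{2}}=\tfrac{2n}{H^{2}}$ — the same cancellation that produced the coefficient in Lemma \ref{AG} — shows the dissipative coefficient is exactly $-\tfrac{2n}{H^{2}}$.

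\textbf{Step 2: running the maximum principle.} On $\Sigma_t$ the function $\varphi$ is bounded by Corollary \ref{CrudeC^2Bound}, and $y\ge y_{inf}(0)e^{-t/n}$, $v\le 1+Ce^{-2t/n}$, $c_0n\le H\le\sqrt{n^{2}+C_0e^{-2t/n}}$ by Theorems \ref{barriers} and \ref{globalHcontrol}. With these, the negative term $-\tfrac{1}{v^{2}F^{2}}|\nabla^0\nabla^0\nabla^0y|^{2}_{\tilde\delta}$ absorbs the quadratic third-derivative terms in $E$ and the drift $\langle X,\nabla^0\varphi\rangle$; as in Lemmas \ref{CutEq}--\ref{LAEs} one first replaces $\varphi$ by $\eta(\varphi)$ with $\eta(s)=s^{2}$ to cancel the $1/\varphi$ that arises, and then either applies Theorem \ref{ODEmax} directly or runs the $\eta(\alpha)\eta(\varphi)$-comparison of Lemma \ref{LAEs} with $\alpha$ the cutoff of Lemma \ref{CutEq}, letting the radius go to infinity. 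Either way one gets, at differentiable times of $\varphi_{sup}$,
\[
\frac{d\varphi_{sup}}{dt}\le-\frac{2n}{H^{2}}\varphi_{sup}\le-\frac{2n}{n^{2}+C_0e^{-2t/n}}\varphi_{sup},
\]
and integrating as in Lemma \ref{AG} gives $\varphi_{sup}\le Ce^{-2t/n-ne^{-2t/n}}$, i.e. $|\nabla^0\nabla^0y|\le Ce^{-t/n}$; this is the claim with $\gamma=1/n$, and any $0<\gamma<1/n$ follows with no sharp book-keeping.

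\textbf{Main obstacle.} The one delicate point is the absorption of the terms in $E$: the third spatial derivatives of $y$ are not a priori bounded, and the coefficient of the ``bad'' cubic term is comparable to $|\nabla^0\nabla^0y|/F$, which on the crude $C^{2}$ scale is of order $(\text{crude }C^{2}\text{ constant})/n$ — borderline. It is the interplay of the good term $-|\nabla^0\nabla^0\nabla^0y|^{2}_{\tilde\delta}$, the $\eta(s)=s^{2}$ device, and the spatial cutoff of Lemma \ref{CutEq} (rather than Theorem \ref{ODEmax} alone) that makes the estimate close; this is the ``different method'' alluded to in the introduction. A more geometric alternative would be to show $M_i^j:=HA_i^j$ converges to its logistic equilibrium $n\delta_i^j$ under the evolution equation of Lemma \ref{EvolutionEquations} — the largest eigenvalue by the cutoff comparison of Lemma \ref{LAEs}, then all eigenvalues via $\operatorname{tr}M=H^{2}$ and the trapping $c_0n\le H\le\sqrt{n^{2}+C_0e^{-2t/n}}$ — and to conclude from $A_i^j-\delta_i^j=\tfrac{y}{v}\tilde\delta^{ik}y_{kj}$, i.e. $|\nabla^0\nabla^0y|\le\tfrac{v^{3}}{y}|A_i^j-\delta_i^j|$; but there the obstacle reappears as controlling the smallest eigenvalue of $M$, for which $-\tfrac{2}{H^{3}}\nabla_iH\nabla^jH$ carries the unfavorable sign.
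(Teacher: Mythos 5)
Your core approach matches the paper exactly: differentiate the graph equation \eqref{IMCF3} twice, contract with $y^{ij}$ to get a scalar parabolic equation for $\beta:=y^{ij}y_{ij}$ with the good dissipative term $2\frac{\partial F}{\partial u}\beta=-\frac{2n}{H^2}\beta$ (your $\frac{1}{v^2F^2}\cdot\frac{-2n}{y^2}\varphi$, since $v^2F^2y^2=H^2$) and the good $-2\frac{\partial F}{\partial a_{lm}}y_{ijl}y^{ijm}\le -Ce^{-2t/n}|\nabla^3y|^2$ term, then apply the ODE maximum principle at infinity. Where you deviate is in the mechanism for closing the estimate against the third-derivative error terms, and this is where your ``main obstacle'' paragraph goes astray.

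The paper does not invoke the cutoff $\alpha$ of Lemma \ref{CutEq}, nor any $\eta(\varphi)=\varphi^2$ device, for this lemma. The cutoff machinery is reserved for Lemma \ref{LAEs}, where the quantity to be bounded is the largest eigenvalue of $P_i^j$, which is only locally Lipschitz and therefore outside the scope of Theorem \ref{ODEmax}; $\beta$ itself is a smooth scalar, so Theorem \ref{ODEmax} applies directly. Also, no $1/\varphi$ arises in the $\beta$-equation — you seem to be importing the $1/\alpha$ from $|\nabla\alpha|^2/\alpha$ in Lemma \ref{LAEq}, which is a feature of the spatial cutoff, not of the quantity being estimated. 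What actually closes the estimate, and what your obstacle paragraph underweights, is that every coefficient in the second-order-in-$F$ remainder $E$ carries an explicit exponential decay inherited from $y\sim e^{-t/n}$, $v\to 1$, and the bounds on $H$: the paper records these as factors $e^{-t/n},\ldots,e^{-5t/n}$ in front of monomials in $\beta^{1/2}$ and $|\nabla^3y|$. The crude bound of Corollary \ref{CrudeC^2Bound}, $\beta^{1/2}\le Ce^{t/n}$, is then used to trade excess powers of $\beta$ for slower exponentials (e.g. $e^{-5t/n}\beta^{5/2}\le Ce^{-4t/n}\beta^2$), and Young's inequality splits the $|\nabla^3y|\beta^k$ cross terms into pieces absorbed by $-C_2e^{-2t/n}|\nabla^3y|^2$ and, for $t$ large, by $-C_1\beta$. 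There is nothing borderline once the decay factors are tracked; in particular the $\beta^{3/2}$-type term you flag appears with a decaying prefactor, not an $O(1)$ one.

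One further small overclaim: you assert $\gamma=1/n$ by integrating $\varphi_{sup}'\le-\frac{2n}{H^2}\varphi_{sup}$ exactly as in Lemma \ref{AG}, but that inequality holds only after the error terms have been absorbed into a slightly smaller dissipative constant valid for $t$ large. The paper only claims some $\gamma>0$, which is what the argument gives without sharp bookkeeping; the optimal rate is instead obtained later in Theorem \ref{AA} by a bootstrap on $G=|A_{ij}-g_{ij}|^2$, which is the route the paper takes to get to $e^{-2t/n}$.
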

\begin{proof}
Remember we can rewrite IMCF in terms of $y$ as the following PDE
\begin{align}\label{IMCF5}
\frac{\partial y}{\partial t} = \frac{-yv^2}{n+y\tilde{\delta}^{ij} y_{ij}} = F(y,\nabla^0y, \nabla^0\nabla^0y)
\end{align}
where $\tilde{\delta}^{ij}=\delta^{ij} - \frac{\nabla^0_iy\nabla^0_jy}{v^2}$ and $F(u,p_k,a_{lm})$. Now we aim to derive an equation for $\beta = y^{ij}y_{ij}$ which we find by differentiating \eqref{IMCF5} twice and contracting it with $y^{ij}$ we find
\begin{align}
\beta_t&= 2\frac{\partial F}{\partial a_{lm}} y_{ijlm}y^{ij}+2\frac{\partial F}{\partial p_k}y_{ijk}y^{ij} +2\frac{\partial F}{\partial u}y_{ij}y^{ij}
\\&+2\frac{\partial^2 F}{\partial u^2} y_iy_jy^{ij} + 2\frac{\partial^2 F}{\partial p_k\partial p_n}y_{ik}y_{jn}y^{ij} +2 \frac{\partial^2 F}{\partial a_{lm}\partial a_{mn}}y_{ilm}y_{jnm}y^{ij}
\\&+4\frac{\partial^2 F}{\partial u \partial p_k}y_{ik}y_jy^{ij} + 4\frac{\partial^2 F}{\partial a_{lm}\partial u} y_{ilm}y_j y^{ij} + 4\frac{\partial^2 F}{\partial a_{lm} \partial p_k} y_{ilm}y_{jk}y^{ij}
\end{align}
which can be rewritten if we notice that 
\begin{align}
\beta_l&=2y_{ijl}y^{ij}
\\ \beta_{lm}&= 2(y_{ijl}y^{ijm} + y_{ijlm}y^{ij})
\end{align}
and hence
\begin{align}
\beta_t&= 2\frac{\partial F}{\partial a_{lm}} \beta_{lm}+2\frac{\partial F}{\partial p_k}\beta_k +2\frac{\partial F}{\partial u}\beta-2\frac{\partial F}{\partial a_{lm}}y_{ijl}y^{ijm}
\\&+2\frac{\partial^2 F}{\partial u^2} y_iy_jy^{ij} + 2\frac{\partial^2 F}{\partial p_k\partial p_n}y_{ik}y_{jn}y^{ij} +2 \frac{\partial^2 F}{\partial a_{lm}\partial a_{no}}y_{ilm}y_{jno}y^{ij}
\\&+4\frac{\partial^2 F}{\partial u \partial p_k}y_{ik}y_jy^{ij} + 4\frac{\partial^2 F}{\partial a_{lm}\partial u} y_{ilm}y_j y^{ij} + 4\frac{\partial^2 F}{\partial a_{lm} \partial p_k} y_{ilm}y_{jk}y^{ij}
\end{align}
Now our goal is to use the maximum principle on the evolution equation for $\beta$ and so we need to estimate the partial derivatives of $F$ as follows
\begin{align} 
\frac{\partial F}{\partial u}&=\frac{-nv^2}{(n+y\tilde{\delta}^{ij} y_{ij})^2} = \frac{-n}{H^2}\le 0
\\\frac{\partial F}{\partial p_k}&=\frac{-2nyy_k -2y^2y_k\delta^{ij}y_{ij} -2 y^2 y^i y_{ik}}{(n+y\tilde{\delta}^{ij} y_{ij})^2}
\\\frac{\partial F}{\partial a_{lm}} &= \frac{yv^2}{(n+y\tilde{\delta}^{ij} y_{ij})^2} y\tilde{\delta}^{lm}  = \frac{y^2}{H^2}\tilde{\delta}^{lm} \ge 0
\end{align}
as well as the second partial derivatives
\begin{align} 
\frac{\partial^2 F}{\partial u^2}&= \frac{2n v^2\tilde{\delta}^{ij}y_{ij}}{(n+y\tilde{\delta}^{ij} y_{ij})^3}\ge 0
\\\frac{\partial^2 F}{\partial u\partial a_{lm}}&= \frac{2n y v^2}{(n+y\tilde{\delta}^{ij} y_{ij})^3} \tilde{\delta}^{lm}\ge 0
\\\frac{\partial^2 F}{\partial u \partial p_k}&=\frac{2n}{(n+y\tilde{\delta}^{ij} y_{ij})^3} \left (-ny_k-yy_k\delta^{ij}y_{ij} -2yy^ly_{kl} +\frac{yy_ky^iy^jy_{ij}}{v^2} \right )
\\\frac{\partial^2 F}{\partial p_k \partial p_n}&=\frac{-2y\delta_{kn}\left ( n+y\delta^{ij}y_{ij}\right )}{(n+y\tilde{\delta}^{ij} y_{ij})^2} + \frac{-2y^2y_{nk}}{(n+y\tilde{\delta}^{ij} y_{ij})^2}
\\&+4\frac{ny^2y_k +y^3y_k\delta^{ij}y_{ij} + y^3 y^i y_{ik}}{(n+y\tilde{\delta}^{ij} y_{ij})^3}\left (\frac{2y^iy_{in}}{v^2}-\frac{2y^iy^jy_ny_{ij}}{v^4} \right )
\\\frac{\partial^2 F}{\partial a_{lm} \partial p_k }&=\frac{2y^2y_k}{(n+y\tilde{\delta}^{ij} y_{ij})^2}\tilde{\delta}^{lm}+\frac{2y^2v^2}{(n+y\tilde{\delta}^{ij} y_{ij})^2}\left (\frac{-y_l\delta_{lk}-y_m\delta_{mk}}{v^2} + \frac{2y_ly_my_k}{v^4} \right )\tilde{\delta}^{lm}
\\&+\frac{2y^3v^2}{(n+y\tilde{\delta}^{ij} y_{ij})^3}\left(\frac{2y^iy_{ik}}{v^2}-\frac{2y^iy^jy_ky_{ij}}{v^4} \right )\tilde{\delta}^{lm}
\\\frac{\partial^2 F}{\partial a_{lm} \partial a_{no}}&= \frac{-2y^3 v^2}{(n+y\tilde{\delta}^{ij} y_{ij})^3} \tilde{\delta}^{lm}\tilde{\delta}^{no}\le 0
\end{align}
where the inequalities should be understood as communicating positive or negative symmetric matrices. Now we try to deal with some problematic terms
\begin{align}
\frac{\partial^2 F}{\partial p_k\partial p_n} y_{ik}y_{jn}y^{ij} &\le-C_4 e^{-t/n}\beta^{3/2} -C_5 e^{-2t/n}\beta^2+C_6e^{-5t/n}\beta^{5/2}
\\\frac{\partial^2 F}{\partial u \partial p_k} y_{ik}y_jy^{ij}&\le 0
\\\frac{\partial^2 F}{\partial u\partial a_{lm}}y_{ilm}y_jy^{ij}&\le C_7e^{-2t/n}|\nabla^3 y|\beta^{1/2}
\\ \frac{\partial^2 F}{\partial a_{lm} \partial p_k} y_{ilm}y_{jk}y^{ij}&\le C_{8}e^{-3t/n}|\nabla^3y| \beta+C_{9}e^{-4t/n}|\nabla^3y| \beta^{3/2}
\end{align}
for $t$ large enough.

Now by applying our previous estimates for $y, H$ and $v$ we find the evolution inequality
\begin{align}
&\beta_t\le 2\frac{\partial F}{\partial a_{lm}} \beta_{lm}+2\frac{\partial F}{\partial p_k}\beta_k -C_1 \beta-C_2 e^{-2t/n}|\nabla^3y|^2+C_3e^{-2t/n}\beta  
\\& -C_4 e^{-t/n}\beta^{3/2}-C_5 e^{-2t/n}\beta^2+C_6e^{-5t/n}\beta^{5/2}
\\&+ C_{7}e^{-2t/n}|\nabla^3y|\beta^{1/2} + C_{8}e^{-3t/n}|\nabla^3y| \beta+C_{9}e^{-4t/n}|\nabla^3y| \beta^{3/2}
\end{align} 
for $t$ large enough. We first note that the $C_3$ term can be handled by the $C_1$ term for $t$ large enough.

Now we would like to use Corollary \ref{CrudeC^2Bound} to trade some $\beta$ terms in for growth bounds in order to control the evolution equation for $\beta$. To this end we notice that $C_6e^{-5t/n}\beta^{5/2} \le C e^{-4t/n}\beta^2$ which can be controlled by the $C_5$ term. Now we use Young's inequality to break up the $C_{7}$, $C_{8}$ and $C_{9}$ terms into controllable pieces
\begin{align}
C_{7}e^{-2t/n}|\nabla^3y|\beta^{1/2} &\le C_{7} \left(\frac{1}{2}e^{-3t/n}|\nabla^3y|^2 + \frac{1}{2}e^{-t/n}\beta \right)
\end{align}
where the first term  is controlled by the $C_2$ term and the second term is controlled by the $C_1$ term. Similarly,
\begin{align}
C_{8}e^{-3t/n}|\nabla^3y| \beta &\le C_{8} \left(\frac{1}{2}e^{-3t/n}|\nabla^3y|^2 + \frac{1}{2}e^{-3t/n}\beta^2 \right)
\\C_{9}e^{-4t/n}|\nabla^3y| \beta^{3/2}&\le C_{9} \left(\frac{1}{2}e^{-3t/n}|\nabla^3y|^2 + \frac{1}{2}e^{-5t/n}\beta^3 \right)
\\&\le C_{9} \left(\frac{1}{2}e^{-3t/n}|\nabla^3y|^2 + \frac{1}{2}e^{-3t/n}\beta^2 \right)
\end{align}
where the first term in each line is controlled by the $C_2$ term and the second term is controlled by the $C_5$ term. 

Now we arrive at a simple evolution inequality for $\beta$, for large enough $t$
\begin{align}
\beta_t&\le 2\frac{\partial F}{\partial a_{lm}} \beta_{lm}+2\frac{\partial F}{\partial p_k}\beta_k -C_1 \beta
\end{align} 
to which the result follows by the ODE maximum principle at infinity.
\end{proof}

\begin{Cor}\label{CrudeTFABound}
For hypersurfaces satisfying the hypotheses of Theorem \eqref{LTE} the corresponding solution to IMCF in hyperbolic space satisfies
\begin{align}
|A_{ij} - g_{ij}|\le C e^{-\left(\gamma+\frac{1}{n} \right ) t}
\end{align}
where $\gamma > 0$ and $(x,t) \in \R^n\times [0,\infty)$.
\end{Cor}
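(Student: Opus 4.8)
The plan is to reduce this to the decay estimates already in hand, since $A_{ij}-g_{ij}$ is, up to a scalar factor, nothing but the flat Hessian of $y$. Recall from the proof of Corollary \ref{CrudeC^2Bound} the algebraic identity for graphs, $A_{ij}=\frac{1}{yv}y_{ij}+g_{ij}$, equivalently $A_i^j-\delta_i^j=\frac{y}{v}\tilde\delta^{ik}y_{kj}$. So the whole statement comes down to controlling the $\Sigma_t$-norm of this tensor and then inserting the known rates for $y$ and for $\nabla^0\nabla^0 y$.

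First I would fix the convention that $|A_{ij}-g_{ij}|$ denotes the norm of the $(1,1)$-tensor $A_i^j-\delta_i^j$ measured in the metric $g$ of $\Sigma_t$, the same convention used in Corollary \ref{CrudeC^2Bound}. Setting $B_i^j:=\tilde\delta^{jk}y_{ik}$ and using $g^{ij}=y^2\tilde\delta^{ij}$ and $g_{ij}=y^{-2}(\delta_{ij}+y_iy_j)$, a short computation recombines the $\nabla^0 y\otimes\nabla^0 y$ pieces and gives $|B|_g^2=\tilde\delta^{ik}\tilde\delta^{jl}y_{ij}y_{kl}$, which, since $0\le\tilde\delta\le\delta$ as symmetric matrices, is at most the flat Frobenius norm $|\nabla^0\nabla^0 y|^2$. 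Hence, using also $v\ge1$,
\begin{align}
|A_i^j-\delta_i^j|_g=\frac{y}{v}\,|B|_g\le \frac{y}{v}\,|\nabla^0\nabla^0 y|\le y\,|\nabla^0\nabla^0 y|.
\end{align}

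Finally I would substitute the two bounds already available: the barrier estimate $y(x,t)\le y_1 e^{-t/n}$ of Theorem \ref{barriers} and the $C^2$ decay $|\nabla^0\nabla^0 y|\le Ce^{-\gamma t}$ of Lemma \ref{C^2Decay}, which together yield $|A_{ij}-g_{ij}|\le Cy_1 e^{-(\gamma+1/n)t}$ on all of $\R^n\times[0,\infty)$, as claimed. I do not expect any genuine obstacle here; the only points requiring a little care are the bookkeeping of which background metric the various norms are measured in and the two elementary matrix inequalities ($\tilde\delta\le\delta$ and $v\ge1$) used to pass from $|B|_g$ to $|\nabla^0\nabla^0 y|$, the spare factor $v$ being exactly cancelled by the $1/v$ in $\frac{y}{v}$.
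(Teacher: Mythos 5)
Your proof is correct and takes essentially the same approach as the paper: both rewrite $A_{ij}-g_{ij}$ as $\frac{1}{yv}\nabla^0_i\nabla^0_j y$ and then insert the barrier bound $y\le y_1 e^{-t/n}$ from Theorem \ref{barriers} together with the decay $|\nabla^0\nabla^0 y|\le Ce^{-\gamma t}$ from Lemma \ref{C^2Decay}. Your version is a bit more careful than the paper's one-line argument, since you explicitly verify via $g^{ij}=y^2\tilde\delta^{ij}$, $g_{ij}=y^{-2}(\delta_{ij}+y_iy_j)$, and $\tilde\delta\le\delta$ that the induced-metric norm is controlled by $\frac{y}{v}|\nabla^0\nabla^0 y|$, whereas the paper simply absorbs this into a constant.
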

\begin{proof}
Since $\Sigma_t$ is a graph over $\R^n$ we can write $A_{ij} = \frac{1}{y v}\nabla^0_i\nabla^0_j y +g_{ij}$ and so we find that $|A_{ij}- g_{ij}| \le C \frac{y}{v}|\nabla^0 \nabla^0 y|\le Ce^{\left(-\gamma t-\frac{1}{n}t \right ) }$, as desired.
\end{proof}

The last asymptotic estimate we would like is to improve Corollary \ref{CrudeTFABound} so that $|A_{ij}-g_{ij}| \le C e^{-2t/n}$, which is the optimal decay rate we expect for IMCF in Hyperbolic space.

\begin{Thm}\label{AA}
For hypersurfaces satisfying the hypotheses of Theorem \eqref{LTE} the corresponding solution to IMCF in hyperbolic space satisfies
\begin{align}
|A_{ij}-g_{ij}| \le C e^{-2t/n}
\end{align}
\end{Thm}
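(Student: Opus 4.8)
The plan is to run a tensor maximum principle for the $(1,1)$-tensor $B_i^j := A_i^j - \delta_i^j$, in the spirit of Lemma~\ref{LAEs}. Using the evolution equations of Lemma~\ref{EvolutionEquations} together with the identities $|A|^2 = |B|^2 + 2H - n$ and $A_{il}A^{jl} = (B^2)_i^j + 2B_i^j + \delta_i^j$, a short computation gives
\begin{align}
\left(\partial_t - \frac{1}{H^2}\Delta\right)B_i^j = -\frac{2}{H^3}\nabla_i H\,\nabla^j H - \frac{2}{H}B_i^j + \frac{|B|^2}{H^2}B_i^j - \frac{2}{H}(B^2)_i^j + \frac{|B|^2}{H^2}\delta_i^j .
\end{align}
The term $-\frac{2}{H}B_i^j$ is the one that forces the rate $e^{-2t/n}$, since $H\to n$ by Theorem~\ref{globalHcontrol}. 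By Corollary~\ref{CrudeTFABound} we already have $|B|\le Ce^{-(\gamma+1/n)t}$ and $|H-n|\le Ce^{-(\gamma+1/n)t}$, so the terms $\frac{|B|^2}{H^2}B_i^j$, $\frac{2}{H}(B^2)_i^j$, $\frac{|B|^2}{H^2}\delta_i^j$ and the discrepancy between $\frac{2}{H}$ and $\frac{2}{n}$ are all $O(e^{-2(\gamma+1/n)t})$, which decays strictly faster than $e^{-2t/n}$ since $\gamma>0$; if $\gamma\ge 1/n$ then Corollary~\ref{CrudeTFABound} already gives the claim, so one may assume $\gamma<1/n$.

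Following the mechanism of Lemma~\ref{LAEs}, I would then use the cutoff $\alpha$ and the function $\eta(\alpha)=\alpha^2$ of Lemma~\ref{CutEq} and Definition~\ref{impDef} and consider on $U_R$ the tensor $\Phi_i^j := C(t)\delta_i^j - \eta(\alpha)B_i^j$, where $C(t)$ is a supersolution of a scalar ODE of the form $C'(t) = -\frac{2}{n}C(t) + \Lambda e^{-2(\gamma+1/n)t}$ with $\Lambda$ large enough to absorb the error terms above and $C(0)$ larger than $\max_{U_{R,1,0}}\eta(\alpha)\lambda_{\max}(B)$ (finite by the hypothesis $|A|(\cdot,0)\le A_0$); since $\gamma>0$ one checks $C(t)\le KR^4 e^{-2t/n}$ with $K$ independent of $R$. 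At a hypothetical first zero eigenvector $v$ of $\Phi$ the cutoff error terms are absorbed by taking $C_R$ large (as in Lemma~\ref{LAEs}), the gradient term contributes $-\frac{2}{H^3}(\nabla_v H)^2\le 0$ to the evolution of $B(v,v)$ and hence is favorable for $\Phi$, and the remaining reaction is dominated by $C'(t)+\frac{2}{H}C(t)$, contradicting minimality; so $\lambda_{\max}(B)\le C(t)/\eta(\alpha)$ on $U_R$, the factor $R^4$ cancels against $\eta(\alpha)\ge(1-\theta)^2R^4$ on $U_{R,\theta,t}$ exactly as in Lemma~\ref{LAEs}, and $R\to\infty$, $\theta\to0$ give $\lambda_{\max}(B)\le Ce^{-2t/n}$. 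For the lower bound one either runs the symmetric argument on $C(t)\delta_i^j+\eta(\alpha)B_i^j$, or uses $\operatorname{tr} B = H-n$ with a sharpened bound $|H-n|\le Ce^{-2t/n}$ to get $\lambda_{\min}(B)\ge(H-n)-(n-1)\lambda_{\max}(B)\ge -Ce^{-2t/n}$. Either way $|A_{ij}-g_{ij}| = |B|\le Ce^{-2t/n}$.

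The main obstacle is the gradient term $-\frac{2}{H^3}\nabla_i H\,\nabla^j H$: it is favorable for bounding $\lambda_{\max}(B)$ but has the wrong sign for the lower bound on $\lambda_{\min}(B)$ (contributing $-\frac{2}{H^3}(\nabla_v H)^2$ with the bad sign to the evolution of $(C\delta+\eta B)(v,v)$), and the sharpened estimate $|H-n|\le Ce^{-2t/n}$ needed for the trace route likewise runs into the $|\nabla H|^2$ term in the evolution of $H-n$. Closing this requires a gradient-decay estimate $|\nabla H|_g^2\le Ce^{-(4/n+\varepsilon)t}$ for some $\varepsilon>0$, which I would obtain from the $C^1$ decay of Lemma~\ref{AG}, the $C^2$ decay of Lemma~\ref{C^2Decay}, the bound $|\nabla H|_g^2\le y^2|\nabla^0 H|^2$ with $y\le Ce^{-t/n}$, and interior Schauder/Krylov estimates for the uniformly parabolic equation~\eqref{IMCF3} after parabolic rescaling, bootstrapping if the first pass gives only a weaker exponent. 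This last step is standard but technical, and one would refer to \cite{BA,CG3,S} for the details.
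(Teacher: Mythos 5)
Your route is genuinely different from the paper's, and it contains a gap that you yourself flag but do not close. The paper does not run a tensor maximum principle on $B_i^j=A_i^j-\delta_i^j$; it works with the scalar $G=|A_{ij}-g_{ij}|^2=|A|^2-2H+n$ and feeds the evolution equations for $|A|^2$ and $H$ from Lemma~\ref{EvolutionEquations} directly into $G$. The decisive point is that the evolution of $|A|^2$ carries the term $-\frac{2}{H^2}|\nabla A|^2$, while the evolution of $-2H$ produces $+\frac{4}{H^3}|\nabla H|^2$ and $-\frac{4}{H^3}A(\nabla H,\nabla H)$; regrouping and invoking the Kato-type inequality $|\nabla A|^2\ge\frac{3}{n+2}|\nabla H|^2$ one gets a gradient contribution of the form $\frac{|\nabla H|^2}{H^2}\bigl(4\frac{|A-g|}{H}-\frac{3}{n+2}\bigr)$, which is manifestly nonpositive once $|A-g|$ is small, i.e. for $t$ large by Corollary~\ref{CrudeTFABound}. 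That is how the paper disposes of the gradient term, without needing any independent decay estimate for $|\nabla H|$. After that the ODE maximum principle at infinity (Theorem~\ref{ODEmax}) is applied to $G_{\mathrm{sup}}$ with an integrating factor, and a single bootstrap handles the case $3\gamma<1/n$.

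Your approach, by contrast, tracks the eigenvalues of $B_i^j$ individually. For $\lambda_{\max}(B)$ the term $-\frac{2}{H^3}\nabla_i H\nabla^j H$ is indeed favorable, so that half is fine in principle. But you correctly observe that the same term has the wrong sign for $\lambda_{\min}(B)$, and the trace alternative $\operatorname{tr}B=H-n$ reproduces the same bad $|\nabla H|^2$ term in the evolution of $H$. Your fix is to invoke a decay bound $|\nabla H|_g^2\le Ce^{-(4/n+\varepsilon)t}$, which you say would follow from interior Schauder/Krylov estimates after parabolic rescaling and a bootstrap — but you never establish it, and it is not among the estimates proved earlier in the paper. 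This is the missing ingredient: without it, the lower-eigenvalue (or the trace) half of your argument does not close, so the conclusion $|A_{ij}-g_{ij}|\le Ce^{-2t/n}$ is not reached. The paper's scalar formulation buys exactly what you lack: by squaring, it pairs $|\nabla H|^2$ against $|\nabla A|^2$, and the Kato inequality supplies the sign. If you want to keep the tensor route, you would have to actually prove a quantitative $|\nabla H|$ decay (with the exponent strictly better than $2/n$), or otherwise replace the pointwise tensor inequality at a first zero eigenvector by an argument that sees the cancellation between $|\nabla A|^2$ and $|\nabla H|^2$; as written, neither is done.

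One further comment: the paper does not need the cutoff machinery of Lemma~\ref{LAEs} and Definition~\ref{impDef} at this stage. Once $G$ is known to be bounded (from Corollary~\ref{globalAcontrol} and Theorem~\ref{globalHcontrol}) and the evolution inequality for $G$ has good zeroth-order structure and a good-signed gradient term, the ODE maximum principle at infinity applies directly to $G_{\mathrm{sup}}(t)$, with no localization. Reintroducing the cutoff tensor $\Phi_i^j=C(t)\delta_i^j-\eta(\alpha)B_i^j$ is therefore extra work that reimports the very difficulty the scalar formulation was designed to sidestep.
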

\begin{proof}
If we define $G = |A_{ij}-g_{ij}|^2=|A|^2 - 2H +n$ then we can find the following evolution inequality for $G$
\begin{align}
\left (\partial _t - \frac{1}{H^2} \Delta \right )G&= \left (\partial _t - \frac{1}{H^2} \Delta \right )|A|^2 -2 \left (\partial _t - \frac{1}{H^2} \Delta \right ) H
\\&= -\frac{4}{H^3}A(\nabla H,\nabla H) -\frac{2}{H^2}|\nabla A|^2 + 2\frac{n + |A|^2}{H^2} |A|^2
\\& - \frac{4}{H}A^3 + \frac{4}{H^3}|\nabla H|^2 + 2\frac{|A|^2}{H} -2\frac{n}{H}
\\&= -\frac{4}{H^3}(A_i^j - \delta_i^j)\nabla_iH\nabla^jH-\frac{2}{H^2}|\nabla A|^2 -\frac{4n}{H^2} G
\\&+\frac{6n|A|^2}{H^2} + \frac{2|A|^2}{H^2} -\frac{10n}{H} + \frac{2|A|^2}{H} - \frac{4}{H} A^3 + \frac{4n^2}{H}
\\&\le -\frac{4n}{H^2} G +  \frac{|\nabla H|^2}{H^2}\left ( 4\frac{|A - g|}{H} - \frac{3}{n+2} \right )
\\& +\frac{2}{H^2} (|A|^4-HA^3) +\frac{4n}{H}\left (1- \frac{n}{H} \right ) 
\\& + \frac{6n}{H} \left (\frac{|A|^2}{H} - 1\right ) + \frac{2}{H}(|A|^2 - A^3)
\end{align}
where we have used the fact that $|\nabla A|^2 \ge \frac{3}{n+2}|\nabla H|^2$. From the asymptotic estimates we already have in Theorem \ref{barriers}, \ref{globalHcontrol} and Lemmas \ref{AG}, \ref{L3} and Corollary \ref{CrudeTFABound} we find
\begin{align} 
\left (\partial _t - \frac{1}{H^2} \Delta \right )G&\le -\frac{4n}{H^2} G + C_1e^{-\left (\frac{4}{n} + \gamma \right )t} + C_2 e^{-\left (\frac{3}{n} + 3\gamma \right )t}
\end{align}
Using previous estimates from this paper we find
\begin{align}
\left (\partial _t - \frac{1}{H^2} \Delta \right )G&\le -\frac{4}{n +C e^{-2t/n}} G + C_1e^{-\left (\frac{4}{n} + \gamma \right )t} + C_2 e^{-\left (\frac{3}{n} + 3\gamma \right )t}
\end{align}

Now we can use an integrating factor to rewrite
\begin{align}
\frac{d }{dt}\left ( (1+ne^{2t/n})^2G_{sup} \right ) \le (1+ne^{2t/n})^2 \left ( C_1e^{-\left (\frac{4}{n} + \gamma \right )t} + C_2 e^{-\left (\frac{3}{n} + 3\gamma \right )t} \right )
\end{align}
which implies, by integrating, that $G_{sup}(t) \le C e^{-\frac{4t}{n}} +  Ce^{-\left( \frac{3}{n}+3\gamma\right ) t} $. Then by applying Theorem \eqref{ODEmax} we get the estimate $G(x,t) \le C e^{-\frac{4t}{n}} +  Ce^{-\left( \frac{3}{n}+3\gamma\right )t}$. 

Now if $3 \gamma \ge \frac{1}{n}$ then we are done so if it is not, $3 \gamma < \frac{1}{n}$, then we can recalculate the evolution inequality (2) with the new bound on $G$ to find

\begin{align} 
\left (\partial _t - \frac{1}{H^2} \Delta \right )G&\le -\frac{4n}{H^2} G + C_1e^{-\left (\frac{6}{n} + \frac{3\gamma}{2} \right )t} + C_2 e^{-\left (\frac{9}{2n} + \frac{9}{2}\gamma \right )t}
\end{align}

Then using the same analysis as above we would find $G(x,t) \le C e^{-\frac{4t}{n}}$ since, when we integrate the right hand side of (3), all the terms will be negative and hence can be thrown out except for the constant which is then multiplied by the integrating factor yielding the correct asymptotic decay rate.

\end{proof}

\section{Conclusion}
\label{sec:3}

In this paper we have seen the utility of the ODE maximum principle at infinity by using Theorem \eqref{ODEmax} to prove a new long time existence theorem and asymptotic analysis for non-compact solutions of IMCF in hyperbolic space, Theorem \eqref{LTE}. We fully expect the ODE maximum principle at infinity to be useful to many more results in the study of non-compact solutions of any geometric evolution equation, especially when it is hard to control terms appearing in an evolution equation on the whole domain as in Theorem \eqref{barriers}.


\end{document}